\newtheorem{Theorem}{Theorem}[section]
\newtheorem{theorem}[Theorem]{Theorem}
\newtheorem{lemma}[Theorem]{Lemma}
\newtheorem{proposition}[Theorem]{Proposition}
\newtheorem{corollary}[Theorem]{Corollary}
\newtheorem{definition}[Theorem]{Definition}
\newtheorem{remark}[Theorem]{Remark}
\newtheorem{conjecture}[Theorem]{Conjecture}
\newcommand{\ab}{\mathrm{ab}}
\newcommand{\loc}{\mathrm{loc}}
\newcommand{\CT}{Colliot-Th{\'e}l{\`e}ne}
\newcommand{\BM}{Brauer--Manin}
\newcommand{\BB}{\mathrm{BB}}
\newcommand{\chow}{\mathrm{CH}}
\newcommand{\br}{\mathrm{Br}}
\newcommand{\brgp}[1]{\mathrm{Br}(#1)}
\newcommand{\brgpf}[2]{\mathrm{Br}_{#1}(#2)}
\newcommand{\brgpu}[1]{\mathrm{Br}_1(#1)}
\newcommand{\brgpa}[1]{\mathrm{Br}_a(#1)}
\newcommand{\pic}[1]{\mathrm{Pic}(#1)}
\newcommand{\cor}[1]{\mathrm{Cor}_{#1}}
\newcommand{\wres}[1]{\mathrm{Res}_{#1}}
\newcommand{\zcycle}[1]{\mathrm{Z}_0^{#1}}
\newcommand{\zcyclef}[2]{\mathrm{Z}_{0,#2}^{#1}}
\newcommand{\prodf}[1]{\mathop{\prod}\limits_{#1}}
\newcommand{\sumf}[1]{\mathop{\sum}\limits_{#1}}
\newcommand{\inv}{{\mathrm{inv}}}
\newcommand{\reff}[1]{{(\ref{#1})}}
\newcommand{\spec}[1]{\mathrm{Spec}(#1)}
\newcommand{\cohomorf}[2]{\mathrm{H}^{#1}(#2)}
\newcommand{\cohomorh}[2]{\mathbb{H}^{#1}(#2)}
\newcommand{\cohomorab}[2]{\mathrm{H}_{\ab}^{#1}(#2)}
\newcommand{\cohomoret}[2]{\mathrm{H}_\et^{#1}(#2)}
\newcommand{\et}{{\text{\'et}}}
\newcommand{\image}[1]{\mathrm{Im}(#1)}
\newcommand{\dsum}{{\oplus}}
\newcommand{\kernel}[1]{\mathrm{Ker}(#1)}
\newcommand{\cokernel}[1]{\mathrm{Cok}(#1)}
\newcommand{\cG}{\mathcal{G}}
\newcommand{\cO}{\mathcal{O}}
\newcommand{\cP}{\mathcal{P}}
\newcommand{\cR}{\mathcal{R}}
\newcommand{\cX}{\mathcal{X}}
\newcommand{\vA}{\mathbf{A}}
\newcommand{\vP}{\mathbf{P}}
\newcommand{\bA}{\mathbb{A}}
\newcommand{\bG}{\mathbb{G}}
\newcommand{\bP}{\mathbb{P}}
\newcommand{\bQ}{\mathbb{Q}}
\newcommand{\bZ}{\mathbb{Z}}
\renewcommand{\hom}[1]{\mathrm{Hom}_{#1}}
\DeclareSymbolFont{cyrletters}{OT2}{wncyr}{m}{n}
\DeclareMathSymbol{\Sha}{\mathalpha}{cyrletters}{"58}
\newcommand{\PTcouple}[1]{\langle#1\rangle_{\mathrm{PT}}}
\newcommand{\BMcouple}[1]{\langle#1\rangle_{\mathrm{BM}}}
\newcommand{\ABcouple}[1]{\langle#1\rangle_{\mathrm{ab}}}
\begin{document}

\makeatletter
\makeatother

	\author{Hui Zhang}
	\title[\bf{Abelianized Descent Obstruction}]{\bf{{Abelianized Descent Obstruction for 0-Cycles}}}
	
	\address{Hui ZHANG
	\newline University of Science and Technology of China,
	\newline School of Mathematical Sciences,
	\newline 96 Jinzhai Road,
	\newline 230026 Hefei, Anhui, China}
	\email{zhero@mail.ustc.edu.cn}
	
	\keywords{Brauer--Manin obstruction to Hasse principle, 0-cycles, descent obstruction}
	\thanks{\textit{MSC 2020} : 14G12 11G35 14G05 14C25}
	
	
	\setcounter{secnumdepth}{4}
	
	\maketitle 
	
\begin{abstract}
	
	Classical descent theory of \CT\ and Sansuc for rational points tells that, over a smooth variety $X$, the algebraic \BM\ subset equals the descent obstruction subset defined by a universal torsor. Moreover, Harari shows that the \BM\ subset equals the descent obstruction subset defined by torsors under connected linear groups. 
	
	By using the abelian cohomology theory by Borovoi, we define abelianized descent obstructions for 0-cycles by torsors under connected linear groups. As an analogy, we show the equality between the \BM\ obstruction and the abelianized descent obstruction for 0-cycles. We also show that the abelianized descent obstruction is the closure of the descent obstruction defined by Balestrieri and Berg when $X$ is a projective rationally connected variety or a projective K3 surface.
	
\end{abstract}

\section{Introduction}

	Let $k$ be a number field, $\Omega_k$ the set of places of $k$ and $\vA$ the adelic ring of $k$. Fix an algebraic closure $\overline k$ of $k$. Let $X$ be a variety over $k$. Set  $\br(X)=\mathrm{H}_{\et}^2(X,\bG_m)$, $\br_0(X)=\image{\br(k)\rightarrow\br(X)}$, $\br_1(X)=\kernel{\br(X)\rightarrow\br(X_{\overline k})}$ and $\br_a(X)=\br_1(X)/\br_0(X)$. Given a place $v\in\Omega_k$, denote $X_v=X_{k_v}$. Let $\zcycle{}(X)$ be the group of 0-cycles on $X$.
	
	According to the Brauer--Manin pairing 
	\begin{equation*}
		\BMcouple{\cdot,\cdot}:\brgp{X}\times X(\vA)\rightarrow\bQ/\bZ,
	\end{equation*}given a subset $B$ of $\brgp{X}$, there is a subset $X(\vA)^B=\{(P_v)\in X(\vA):\BMcouple{\alpha,(P_v)_{v\in\Omega_k}}=0$ for each $\alpha\in B\}$ of $X(\vA)$. By Brauer--Hasse--Noether exact sequecne, one has the inclustions $X(k)\subseteq X(\vA)^B\subseteq X(\vA)$.

	Following Manin, \CT\ introduced a similar pairing \begin{equation*}
		\BMcouple{\cdot,\cdot}:\brgp{X}\times \zcyclef{}{\vA}(X)\rightarrow\bQ/\bZ,
	\end{equation*}in his article \cite[]{CT95}. Here $\zcyclef{}{\vA}(X)=\coprod\limits_{\delta\in\bZ}\Big(\prod\limits_{v\in\Omega_k}\zcycle{\delta}(X_v)\Big)$ if $X$ is proper. When $X$ is not proper, see Section 2 for the precise definition. Given a subset $B$ of $\brgp X$, we define a subgroup $\zcyclef{}{\vA}(X)^B=$\{$(x_v)\in\zcyclef{}{\vA}(X):\BMcouple{\alpha,(x_v)}=0$ for each $\alpha\in B$\}. Still, by Brauer--Hasse--Noether exact sequecne, one can prove that $\zcycle{}(X)\subseteq\zcyclef{}{\vA}(X)^B\subseteq\zcyclef{}{\vA}(X)$. When $B=\br(X)$ or $\br_1(X)$, we simply write this \BM\ subgroup as $\zcyclef{}{\vA}(X)^\br$ or $\zcyclef{}{\vA}(X)^{\br_1}$. Given an integer $r$, subset $\zcyclef{r}{\vA}(X)^{B}$ of $\zcyclef{}{\vA}(X)^{B}$ collects those adelic 0-cycles of degree $r$ at each place $v\in\Omega_k$.
	
	It is natural to ask whether it is possible to generalise these results for rational points to those for 0-cycles. Especially, we care whether there is a relation between the descent obstruction and the Brauer--Manin obstruction for 0-cycles.

	
	\CT\ and Sansuc established the descent theory for tori for rational points and defined universal torsors in \cite{CTS87}. Given a torus $G$ and a torsor $f:Y\rightarrow X$ under $G$, they define a descent subset $X(\vA)^f$ of $X(\vA)$ associated to $f$, that is \begin{equation}\label{def-f-desc}
	X(\vA)^f=\{(P_v)_{v\in\Omega_k}:f(P_v)\text{ lies in the image of }\cohomorf{1}{k,G}\rightarrow\prod\limits_{v\in\Omega_k}\cohomorf{1}{k_v,G}\}.
	\end{equation}Later, this descent theory was extended by Skorobogatov to groups of multiplicative type in \cite{Sko99}. The descent obstruction for universal torsors can be compared to the algebraic \BM\ obstruction in the following sense.
	
	\begin{theorem}[\cite{Sko01}, Theorem 6.1.2 (a) and Corollary 6.1.3 (1)]\label{minor1}
		Let $X$ be a smooth, proper and geometrically integral variety over a number field $k$ such that $\pic{X_{\overline k}}$ is of finite type. 
		
	(1) Let $f:Y\rightarrow X$ be a universal torsor over $X$, then we have \begin{equation*}
	X(\vA)^{\br_1}=X(\vA)^f.
	\end{equation*}
	
	(2) The algebraic \BM\ subset $X(\vA)^{\br_1}\neq\emptyset$ if and only if there is a universal torsor $f:Y\rightarrow X$ such that $Y(\vA)\neq\emptyset$.
	\end{theorem}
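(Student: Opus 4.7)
The plan is to prove the theorem by exploiting the duality-theoretic link between the N\'eron-Severi torus and the algebraic Brauer group, combined with the twist decomposition of the descent set. Let $T$ be the $k$-torus whose character module satisfies $\hat T \cong \pic{X_{\overline k}}$ as Galois modules; this exists because $\pic{X_{\overline k}}$ is of finite type. By the Hochschild-Serre spectral sequence for the \'etale sheaf $T$ on $X$, a universal torsor $f : Y \to X$ is characterized as a $T$-torsor whose class in $\cohomoret{1}{X, T}$ maps to the identity in $\hom{G_k}(\hat T, \pic{X_{\overline k}})$. The cornerstone of descent theory, which I will invoke as a black box, is the twist decomposition
\begin{equation*}
X(\vA)^f \;=\; \bigcup_{[\sigma] \in \cohomoret{1}{k, T}} f^\sigma\bigl(Y^\sigma(\vA)\bigr),
\end{equation*}
together with the observation that every twist $f^\sigma$ remains a universal torsor, since its type coincides with that of $f$.

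For part (1), the inclusion $X(\vA)^f \subseteq X(\vA)^{\br_1}$ is the easier direction: for an adelic point of the form $f^\sigma(y_v)$, the Brauer-Manin pairing against any $\alpha \in \br_1(X)$ factors through a global cohomology class and vanishes by global reciprocity. The converse inclusion is more substantive. It hinges on a compatibility formula expressing $\BMcouple{\alpha, (P_v)}$, for $\alpha \in \br_1(X)$, as the adelic cup-product pairing between $(f(P_v))_v \in \prod_v \cohomoret{1}{k_v, T}$ and the image of $\alpha$ in $\cohomoret{1}{k, \hat T}$ under the Hochschild-Serre edge map to $\brgpa{X}$. If $(P_v) \in X(\vA)^{\br_1}$, the formula forces $(f(P_v))_v$ to annihilate the image of $\cohomoret{1}{k, \hat T}$ in the adelic pairing, and Poitou-Tate duality for $T$ places $(f(P_v))_v$ in the image of $\cohomoret{1}{k, T}$; the twist decomposition then yields $(P_v) \in X(\vA)^f$.

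For part (2), the direction $(\Leftarrow)$ is immediate from (1) together with $f(Y(\vA)) \subseteq X(\vA)^f$. The direction $(\Rightarrow)$ demands first the existence of some universal torsor over $X$: one must show that the obstruction in $\cohomoret{2}{k, T}$ to lifting the identity of $\hom{G_k}(\hat T, \pic{X_{\overline k}})$ through the Hochschild-Serre edge map $\cohomoret{1}{X, T} \to \hom{G_k}(\hat T, \pic{X_{\overline k}})$ vanishes. An adelic point of $X$ kills this obstruction locally at every place, and a Poitou-Tate duality argument for $T$ promotes this local triviality to global vanishing. Once such a universal torsor $f$ exists, part (1) combined with the twist decomposition produces a twist $f^\sigma$ with $Y^\sigma(\vA) \neq \emptyset$, which is itself the desired universal torsor.

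The main obstacle will be establishing the compatibility formula between the Brauer-Manin pairing and the descent pairing that lies at the heart of both parts. Its verification requires tracking the Hochschild-Serre differential $d_2 : \pic{X_{\overline k}}^{G_k} \to \cohomoret{2}{k, \bG_m}$ and its local variants, and matching the local invariants produced by evaluating $\alpha \in \br_1(X)$ at $P_v$ with the Tate local duality pairing between $\cohomoret{1}{k_v, T}$ and $\cohomoret{1}{k_v, \hat T}$.
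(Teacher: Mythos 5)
Your route is the standard Colliot-Th\'el\`ene--Sansuc/Skorobogatov descent argument; the paper itself imports this theorem from \cite{Sko01} and proves its $0$-cycle analogue (Theorem \ref{main1}, Lemma \ref{06-14}, Theorem \ref{cor1}) by exactly the machinery you describe, namely the compatibility $\rho(\alpha\cup f)=\lambda_*(\alpha)$ from \cite{HS02} and the Poitou--Tate exact sequence for the descent group. Two corrections are needed. The small one: $\pic{X_{\overline k}}$ of finite type may have torsion (an Enriques surface, say), so in general no torus $T$ satisfies $\hat T\cong\pic{X_{\overline k}}$; you must take a group $G$ of multiplicative type, and you must also record that $(f(P_v))_v$ lands in the restricted product $\vP^1(k,G)$ (the paper's Lemma \ref{loc:tv1} and Corollary \ref{corr1}) before the Poitou--Tate sequence can be applied to it.

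The substantive gap is in part (2), direction $(\Rightarrow)$. You write that an adelic point makes the lifting obstruction $\partial(\lambda)\in\cohomorf{2}{k,G}$ locally trivial and that ``a Poitou--Tate duality argument promotes this local triviality to global vanishing.'' Local triviality only places $\partial(\lambda)$ in $\Sha^2(G)$, and Poitou--Tate duality identifies $\Sha^2(G)$ with the dual of $\Sha^1(\hat G)$ --- it does not annihilate it. Taken literally, your sentence would show that every everywhere-locally-soluble $X$ admits a universal torsor, which is false. The missing step is precisely where the hypothesis $X(\vA)^{\br_1}\neq\emptyset$ (rather than merely $X(\vA)\neq\emptyset$) is consumed: for $A\in\Sha^1(\hat G)$ one has $\PTcouple{\partial(\lambda),A}=\sum_v\inv_v(\alpha(P_v))$ for any $\alpha\in\br_1(X)$ with $\rho(\alpha)=\lambda_*(A)$, and this sum vanishes because $(P_v)$ is orthogonal to $\br_1(X)$; only then does the non-degeneracy of the Poitou--Tate pairing force $\partial(\lambda)=0$. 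This is the content of the paper's Lemma \ref{06-14}, and it rests on the same compatibility formula you already single out as the main obstacle, so the repair fits inside your framework --- but it must be made explicit, since as written the logic of part (2) does not go through.
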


	Notice that a group of multiplicative type is always commutative. In this case, one can easily define a descent obstruction for 0-cycles with the help of corestriction homomorphisms. 
	
	Let $G$ be a commutative $k$-group. For each place $v\in\Omega_k$, one can define an extended evaluation paring
	\begin{equation}\label{pair1}
	(\cdot,\cdot)^v:\cohomorf1{X,G}\times\zcycle{}(X_v)\rightarrow\cohomorf{1}{k_v,G}
	\end{equation}by sending an element $f\in\cohomorf{1}{X,G}$ and a 0-cycle $x_v=\sum n_{v,i}.P_{v,i}$ to \begin{equation}\label{formula}
	(f,x_v)^v=\sum n_i.\cor{k(P_{v,i})/k}(f(P_{v,i}))
	\end{equation}where $k(P_{v,i})$ is the residue field of $X_v$ at $P_{v,i}$ and $f(P_{v,i})$ is the (canonical) evaluation of $f$ at $P_{v,i}$. This paring defines a subset $\zcyclef{}{\vA}(X)^f=$\{$(x_v)_{v\in\Omega_k}\in\zcyclef{}{\vA}(X):((f,x_v)^v)_{v\in\Omega_k}$ lies in the image of $\cohomorf{1}{k,G}\rightarrow\prod\limits_{v\in\Omega_k}\cohomorf{1}{k_v,G}$\} of $\zcyclef{}{\vA}(X)$ for each torsor $f\in\cohomorf1{X,G}$.

	Then one can obtain the following analogy of Theorem \ref{minor1}.


	
	\begin{theorem}[Theorem \ref{cor1}]\label{cor1-copy}
		Let $X$ be a smooth, proper and geometrically integral variety over a number field $k$ such that $\pic{X_{\overline k}}$ is of finite type. 
		
	(1) Let $f:Y\rightarrow X$ be a universal torsor over $X$, then we have  \begin{equation*}
		\zcyclef{}{\vA}(X)^{\br_1}=\zcyclef{}{\vA}(X)^f.
	\end{equation*} 
	
	(2) If $\zcyclef{1}{\vA}(X)^{\br_1}\neq\emptyset$, then universal $X$-torsors exist.
		\end{theorem}

	

	The descent theory was also extended to a general algebraic group by Harari in \cite{Har02}. Harari defined the connected descent obstruction $X(\vA)^{\text{conn}}$ and compared it with the \BM\ obstruction in the following sense.


\begin{theorem}[\cite{Har02}, Theorem 2 and Remark 4]\label{minor2}
	Let $X$ be a smooth and geometrically integral variety over a number field $k$, then  \begin{equation*}
	X(\vA)^\br=X(\vA)^{\mathrm{conn}}.
	\end{equation*}
	\end{theorem}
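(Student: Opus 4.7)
The plan is to prove the two inclusions $X(\vA)^{\mathrm{conn}} \subseteq X(\vA)^\br$ and $X(\vA)^\br \subseteq X(\vA)^{\mathrm{conn}}$ separately, using the structure theory of connected linear algebraic groups to reduce to tori and finite central subgroups, both of which are controlled by Brauer classes via classical duality.

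For $X(\vA)^{\mathrm{conn}} \subseteq X(\vA)^\br$, the task is to exhibit, for each $\alpha \in \br(X)$, a connected linear torsor whose descent obstruction is at least as strong as the $\alpha$-Brauer--Manin obstruction. An algebraic class $\alpha \in \br_1(X)$ is already detected by a universal torsor (a torus torsor) via Theorem~\ref{minor1}(1). A general class $\alpha \in \br(X)$ of order $n$ lifts to a class in $H^1(X, \mathrm{PGL}_n)$ corresponding to a Severi--Brauer variety $Y_\alpha \to X$, which is a torsor under the connected linear group $\mathrm{PGL}_n$; the boundary map for the short exact sequence $1 \to \mu_n \to \mathrm{SL}_n \to \mathrm{PGL}_n \to 1$, together with the standard compatibility between cup products and the Brauer--Manin pairing, forces $\BMcouple{\alpha,(P_v)}=0$ whenever the $Y_\alpha$-descent obstruction vanishes at $(P_v)$.

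For the converse, given $(P_v) \in X(\vA)^\br$ and a torsor $f \colon Y \to X$ under a connected linear $k$-group $G$, I would exploit the devissages
\[ 1 \to G^u \to G \to G^{\mathrm{red}} \to 1, \qquad 1 \to G^{\mathrm{der}} \to G^{\mathrm{red}} \to T \to 1, \]
together with the simply connected central cover $1 \to F \to G^{\mathrm{sc}} \to G^{\mathrm{der}} \to 1$, with $F$ a finite central $k$-subgroup. Unipotent groups contribute nothing to Galois cohomology in characteristic zero, while the Hasse principle of Kneser--Harder--Chernousov tells us $H^1(k, G^{\mathrm{sc}})$ injects into its product over real places. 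Together these reduce the existence of a $k$-twist of $Y$ containing $(P_v)$ in its image to the analogous problems for a torus torsor under $T$ (controlled by $\br_1(X) \subseteq \br(X)$ via Theorem~\ref{minor1}) and for a torsor under the finite group $F$ (controlled by cup products with transcendental classes via the local Tate duality $H^1(k_v, F) \times H^1(k_v, F^\vee) \to \br(k_v)$ extended fibrewise to $X$).

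The main obstacle will be the compatibility of these reductions with successive twisting: the devissage forces repeated twists of $f$ by intermediate Galois cohomology classes, and one must verify that the hypothesis $(P_v) \in X(\vA)^\br$ propagates through each twist and produces the required $k$-lift at every stage. This amounts to a Poitou--Tate-type assertion relating $\br(X)$, the local--global defect of $H^1(k, G^{\mathrm{sc}})$ at real places, and the Tate--Shafarevich group $\Sha^1(k, T)$; identifying precisely which subgroup of $\br(X)$ pairs with the $F$-obstruction after twisting is where the real subtlety lies.
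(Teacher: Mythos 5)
Your first inclusion, $X(\vA)^{\mathrm{conn}}\subseteq X(\vA)^\br$, is essentially right and is the same device this paper uses for its $0$-cycle analogue (Lemma \ref{lemma???}): represent a class $\alpha\in\br(X)[n]$ by a $\mathrm{PGL}_n$-torsor $f:Y\to X$ via Gabber's theorem, note that $\alpha=\Delta_{Y/X}(E)$ for the class $E$ of $1\to\bG_m\to\mathrm{GL}_n\to\mathrm{PGL}_n\to1$, and conclude by global reciprocity: if $(f(P_v))_v$ is the localization of a global class $\sigma$, then $\sum_v\inv_v(\alpha(P_v))=\sum_v\inv_v(\loc_v(\partial_E(\sigma)))=0$. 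Your side remark that algebraic classes are handled by universal torsors is both unnecessary and unavailable here: Theorem \ref{minor1} requires $X$ proper with $\pic{X_{\overline k}}$ of finite type, neither of which is assumed in the present statement, whereas the $\mathrm{PGL}_n$ argument already covers all of $\br(X)$ since $\br(X)\hookrightarrow\br(k(X))$ is torsion for $X$ smooth.

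The second inclusion is where your proposal has a genuine gap, and you say so yourself. The d\'evissage through $G^{\mathrm{sc}}$, a maximal torus and the finite center runs into exactly the difficulties you list --- non-abelian twisting, the real places in the Hasse principle for $G^{\mathrm{sc}}$, and ``identifying which subgroup of $\br(X)$ pairs with the $F$-obstruction'' --- and you do not resolve them. All of this is already packaged into two black boxes that make the inclusion a three-line argument requiring no twisting and no d\'evissage over $X$: (i) the Kottwitz--Borovoi reciprocity sequence $\cohomorf{1}{k,G}\to\bigoplus_{v\in\Omega_k}\cohomorf{1}{k_v,G}\xrightarrow{\sum\mu_v}\pic{G}^D$, exact for any connected linear $G$ (this is \cite[Theorem 3.1]{CT09}, whose abelianized form is Lemma \ref{fund:ext} here), and (ii) the identity $\image{\Delta_{Y/X}}=\kernel{f^*:\br(X)\to\br(Y)}=\br_f'$ from \reff{fund2}. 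Since $(f(P_v))_v$ has almost all components trivial (the adelic analogue of Lemma \ref{loc:tv2}), (i) says that it lifts to $\cohomorf{1}{k,G}$ if and only if $\sum_v\inv_v(\partial_E^{k_v}(f(P_v)))=\BMcouple{\Delta_{Y/X}(E),(P_v)}$ vanishes for every $E\in\pic G$, i.e.\ if and only if $(P_v)$ is orthogonal to $\br_f'$; by (ii) this holds whenever $(P_v)\in X(\vA)^\br$. This yields the stronger statement $X(\vA)^f=X(\vA)^{\br_f'}$, which is precisely what Theorem \ref{main2} establishes for $0$-cycles. Without (i) --- or an equivalent Poitou--Tate statement for $G$ dual to $\pic G$, which is exactly what your d\'evissage would have to reconstruct --- your argument for $X(\vA)^\br\subseteq X(\vA)^{\mathrm{conn}}$ is not complete.
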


	When $G$ is not commutative, there are no corestriction homomorphisms in the formula \reff{formula}. Hence, the definition of $\zcyclef{}{\vA}(X)^f$ does not make sense anymore in such case.

	To recover this, we require the technique of abelian Galois cohomology. The idea is as the following. By \cite{Bo89} or \cite{Bo98}, there is a functor $\cohomorab 1{K,G}$ from the category of connected linear groups over a field $K$ of characteristic 0 to the category of abelian groups. Thanks to the abelian Galois cohomology, we can define a \textit{abelianized} evaluation  \begin{equation*}
	(\cdot,\cdot)^v_\ab:\cohomorf1{X,G}\times\zcycle{}(X_v)\rightarrow\cohomorab{1}{k_v,G},
	\end{equation*}see Section 4 for details. Given a torsor $f\in\cohomorf1{X,G}$, we can also define the \textit{abelianized} descent subgroup $\zcyclef{}{\vA}(X)^f_\ab=$\{$(x_v)_{v\in\Omega_k}\in\zcyclef{}{\vA}(X):((f,x_v)_\ab^v)_{v\in\Omega_k}$ lies in the image of $\cohomorab{1}{k,G}\rightarrow\prod\limits_{v\in\Omega_k}\cohomorab{1}{k_v,G}$\}. Similarly, we define $\zcyclef{}{\vA}(X)^{\mathrm{conn}}_\ab=\bigcap\limits_{f:Y\xrightarrow GX}\zcyclef{}{\vA}(X)^f_\ab$ where the intersection is taken over all $X$-torsors $f:Y\xrightarrow{G}X$ under all connected linear groups $G$ over $k$. Then one has the following analogy as the case of rational points.

%


	\begin{theorem}[Theorem \ref{main3}]\label{main3-copy}
	Let $X$ be a smooth and geometrically integral variety over a number field $k$, then  \begin{equation*}
	\zcyclef{}{\vA}(X)^\br=\zcyclef{}{\vA}(X)^{\mathrm{conn}}_\ab.
	\end{equation*}
	\end{theorem}
	
	Theorem \ref{cor1-copy} is an analogy of Theorem \ref{minor1} while Theorem \ref{main3-copy} is an anology of Theorem \ref{minor2}. These establish the relation between the descent obstruction and the \BM\ obstruction for 0-cycles.

	In \cite[Definition 3.3]{BB24}, Balestrieri and Berg also introduced an $f$-descent obstruction $\zcyclef{}{\vA}(X)^f_\BB$ for 0-cycles for $G$ non-necessarily connected, see also Definition \ref{def-desc-1}. Their insight comes from the $f$-descent decomposition \begin{equation}\label{g-inter}
	X(\vA)^f=\bigcup\limits_{\sigma\in\cohomorf{1}{k,G}}f^\sigma(Y^\sigma(\vA))
	\end{equation}for adelic points. For 0-cycles, the equation (\ref{def-f-desc}) and the equation (\ref{g-inter}) do not clearly coincide. However, if we give $\zcyclef{}{\vA}(X)$ a topology that is natural to describe the weak approximation for 0-cycles, then we will see that the \BM\ subgroup $\zcyclef{}{\vA}(X)^\br$ and the abelianized $f$-descent subgroup $\zcyclef{}{\vA}(X)^f_\ab$ are closed when $f$ is a torsor under a connected linear group $G$. It is not clear whether the $f$-descent subgroup $\zcyclef{}{\vA}(X)_\BB^f$ is closed, but we are able to provide the following relation between this descent subgroup and our abelianized $f$-descent subgroup under suitable assumptions on $X$. Here is the theorem for this result.

	\begin{theorem}[Corollary \ref{main-geo}]\label{main-geo-copy} Let $X$ be a smooth, projective and geometrically integral variety over a number field $k$, $G$ a connected linear $k$-group and $f:Y\rightarrow X$ a torsor under $G$. In each of the following cases:
	
	(i) $X$ is rationally connected,
	
	(ii) $X$ is a K3 surface,
	
	we obtain that 	
	\begin{equation*}
	\zcyclef{}\vA(X)_\ab^f=\overline{\zcyclef{}\vA(X)_{\mathrm{BB}}^f}.
	\end{equation*}
\end{theorem}

%
%
%
%
	

	This paper is organized as the following. In Section 2, we define the descent obstruction for 0-cycles with respect to a commutative group and then prove Theorem \ref{cor1-copy}. In Section 3,  we recall some facts of abelian Galois cohomologies and develop some tools  required for the proof of Theorem \ref{main3-copy}. In Section 4, we define the abelianized descent obstruction for 0-cycles with respect to a connected linear group and then prove Theorem \ref{main3-copy}. Last in Section 5, we define a topology on $\zcyclef{}\vA(X)$ and prove Theorem \ref{main-geo-copy}. At the end of Section 5, we also give an application of Theorem \ref{cor1-copy}, Theorem \ref{main3-copy} and Theorem \ref{main-geo-copy}.
	
%
%
%
%
%
%
%
%


\section{Descent obstruction for a group of multiplicative type}

\subsection{Notations}\,

	This article only deals with $\et$ale cohomology, therefore, we omit the subscript $\et$ in the $\et$ale cohomology $\cohomoret{\ast}{\cdot,\cdot}$. 

	Let $k$ be a field of characteristic 0, $\Gamma_k$ denotes the absolute Galois group $\text{Gal}(\overline k/k)$ of $k$ and for a finite extension $K$ of $k$, $\Gamma_{K/k}$ denotes the Galois group $\text{Gal}(K/k)$. 
	
	A variety $X$ over $k$ is defined to be a separated scheme of finite type over $k$. Let $x=\sum n_i.P_i$ ($n_i\not=0$) be a 0-cycle on $X$, we call \{$P_i$\} the support of $x$ and call $\sum n_i.[k(P_i):k]$ the degree of $x$.
	
	Let $f:Y\rightarrow X$ be a morphism of $k$-varieties, then there is a pushforward homomorphism $f_*:\zcycle{}(Y)\rightarrow\zcycle{}(X)$ by sending $\sum n_i.Q_i$ to $\sum n_i.[k(Q_i):k(f(Q_i))].f(Q_i)$. Pushforward homomorphism does not change the degree of 0-cycles.
	
	Let  $G$ be a commutative $k$-group, for each integer $i$, we can define a pairing
	\begin{equation*}
	(\cdot,\cdot):\cohomorf{i}{X,G}\times\zcycle{}(X)\rightarrow\cohomorf{i}{k,G}
	\end{equation*}by the rule
	 $(f,x)=\sum n_i.\cor{k(P_i)/k}(f(P_i))$ where $x=\sum n_i.P_i$ is the formal sum of finitely many closed points $\{P_i\}$ on $X$, $k(P_i)$ is the residue field of $X$ at $P_i$, $f(P_i)$ is the restriction of $f$ in $\cohomorf{i}{k(P_i),G}$ and $\cor{k(P_i)/k}$ is the corestriction homomorphism $\cohomorf{i}{k(P_i),G}\rightarrow\cohomorf{i}{k,G}$.

	Now, let $k$ be a number field. For a place $v\in\Omega_k$ and $L$ a finite extension of $k_v$, let $|\cdot|_L$ be the valuation of $L$. Denote $\cO_L=$\{$a\in L:|a|_L<1$\} be the discrete valuation ring associated to $L$. We simply denote $\cO_v=\cO_{k_v}$. Let $S$ be a finite subset of $\Omega_k$ containing all archimedean places, then $\cO_{S}=$\{$a\in k:v(a)\geq0$ for each $v\not\in S$\} is the ring of $S$-integers of $k$.
	
	Assume that $X$ is smooth and geometrically integral over $k$. 
	
	Let $\delta$ be an integer. We denote by $\zcyclef{\delta}{\vA}(X)$ the subset of $\prod\limits_{v\in\Omega_k}\zcycle{\delta}(X_v)$ collecting those $(x_v)_{v\in\Omega_k}$ with the property that there is a finite subset $S\subseteq\Omega_k$ containing all archimedean places of $k$, a model $\cX_S$ of $X$ over $\cO_S$ such that for each $v\not\in S$, each closed point $P_v$ in the suppport of $x_v$ comes from $\cX_S(\cO_{k(P_v)})$ where $\cO_{k(P_v)}$ is the integral closure of $\cO_v$ in $k(P_v)$. When $X$ is proper, we have that $\zcyclef{\delta}{\vA}(X)=\prodf{v\in\Omega_k}\zcycle{\delta}(X_v)$. Let $\zcyclef{}{\vA}(X)=\coprod\limits_{\delta\in\bZ}\zcyclef{\delta}{\vA}(X)$. We call an element $(x_v)_{v\in\Omega_k}$ in $\zcyclef{}{\vA}(X)$ (resp. in $\zcyclef{\delta}{\vA}(X)$) an adelic 0-cycle (resp. adelic 0-cycle of degree $\delta$) on $X$.
	
	Let $f:Y\rightarrow X$ be a morphism of $k$-varieties. Pushforward homomorphisms $\zcycle{}(Y_v)\rightarrow\zcycle{}(X_v)$ at each place $v\in\Omega_k$ induce a pushforward $\zcyclef{}{\vA}(Y)\rightarrow\zcyclef{}{\vA}(X)$ on the level of adelic 0-cycles preserving their degrees.
	
	Recall that the \BM\ pairing for 0-cycles 
	\begin{equation} \label{pairing}
		\BMcouple{\cdot,\cdot}:\brgp{X}\times\zcyclef{}{\vA}(X)\rightarrow\mathbb{Q}/\mathbb{Z}
	\end{equation}sends an element $\alpha\in\br(X)$ and an adelic 0-cycle $(x_v)_{v\in\Omega_k}$ to $\sumf{v\in\Omega_k}\inv_v(\alpha,x_v)$. This sum is a finite sum immediately due to the definition of $\zcyclef{}{\vA}(X)$. Hence, it is well-defined. 
	
	By Brauer--Hasse--Noether exact sequecne with a restriction-corestriction argument, we know that the group $\br_0(X)$ is orthogonal to $\zcyclef{}{\vA}(X)$.
	
	
	If $f$ is an element of $\cohomorf{1}{X,G}$, set $\zcyclef{}{\vA}(X)^f=\{(x_v)_{v\in\Omega_k}\in\zcyclef{}{\vA}(X):((f,x_v))_{v\in\Omega_k}$ lies in the image of $\cohomorf{1}{k,G}\rightarrow\prod_{v\in\Omega_k}\cohomorf{1}{k_v,G}\}$. 
	
	
\subsection{Restricted product $\vP^1(k,G)$}\,

	Let $G$ be a group of finite type over  a number field $k$ (it is smooth). Let $X$ be a smooth and geometrically integral variety over $k$ and $f\in\cohomorf{1}{X,G}$ an element.
	
	There is a finite subset $S$ of $\Omega_k$ such that the following assumptions are satisfied.
	
	$\bullet$ $S$ contains all archimedean places of $k$.
	
	$\bullet$ There is a smooth $U$-scheme $\cX$ with geometrical integral fibres such that generic fibre of $\cX$ is $X$. Here $U=\spec{\cO_S}$ is a non-empty open subset of $\spec{\cO_k}$.
	
	$\bullet$ There is a smooth group $U$-scheme $\cG$ such that generic fibre $\cG$ is $G$. 
	

	$\bullet$ The element $f$ comes from $\cohomorf{1}{\cX,\cG}$.
	
	Then we have the following basic facts.
	
	\begin{lemma}\label{loc:tv1}
	Notations as above, take a place $v\not\in S$ and a closed point $P_v$ on $X_v$ coming from $\cX(\cO_{k(P_v)})$, then $f(P_v)\in\cohomorf1{k(P_v),G}$ lies in the image of restriction map \begin{equation*}
	\cohomorf1{\cO_{k(P_v)},\cG}\rightarrow\cohomorf1{k(P_v),G}.
	\end{equation*}
	\end{lemma}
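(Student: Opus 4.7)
The plan is to deduce the lemma by pure functoriality of étale cohomology applied to a natural commutative square of schemes coming from the integral model. The hypothesis that $P_v$ extends to a point of $\cX(\cO_{k(P_v)})$ provides an $\cO_S$-morphism $\phi\colon\spec{\cO_{k(P_v)}}\to\cX$ whose generic fibre is the canonical morphism $\iota\colon\spec{k(P_v)}\to X$ defining the closed point $P_v$ (note that $v\notin S$ makes $\cO_{k(P_v)}$ an $\cO_S$-algebra, so $\phi$ is a morphism of $U$-schemes). Concretely, the square
$$
\xymatrix{
\spec{k(P_v)} \ar[r]^-{\iota} \ar[d] & X \ar[d] \\
\spec{\cO_{k(P_v)}} \ar[r]^-{\phi} & \cX
}
$$
commutes, with vertical arrows being the inclusions of generic fibres.

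The next step is to apply the contravariant functor $\cohomorf{1}{-,\cG}$ (with $\cG$ pulled back along the respective structure morphisms to $U$) to obtain the commutative diagram of pointed sets
$$
\xymatrix{
\cohomorf{1}{\cX,\cG} \ar[r]^-{\phi^*} \ar[d] & \cohomorf{1}{\cO_{k(P_v)},\cG} \ar[d] \\
\cohomorf{1}{X,G} \ar[r]^-{\iota^*} & \cohomorf{1}{k(P_v),G}
}
$$
By assumption $f$ lifts to a class $\tilde f\in\cohomorf{1}{\cX,\cG}$. Tracing $\tilde f$ around the square, $\phi^*(\tilde f)\in\cohomorf{1}{\cO_{k(P_v)},\cG}$ has image in $\cohomorf{1}{k(P_v),G}$ equal to $\iota^*(f)$, which is by definition the canonical evaluation $f(P_v)$. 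This is exactly the claim.

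I do not expect any genuine obstacle here: the lemma is essentially a tautology of cohomological functoriality applied to the integral model. The only point worth double-checking is that the "canonical evaluation" $f(P_v)$ as used in Section 2 really agrees with $\iota^*(f)$, but this is precisely its definition once one observes that the morphism $\spec{k(P_v)}\to X$ used to define $f(P_v)$ is the generic fibre of the chosen lift $\phi$.
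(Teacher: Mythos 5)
Your argument is exactly the paper's proof: lift $P_v$ to the integral point $\cP_v\in\cX(\cO_{k(P_v)})$, form the commutative square relating $\cohomorf1{\cX,\cG}$ and $\cohomorf1{X,G}$ to $\cohomorf{1}{\cO_{k(P_v)},\cG}$ and $\cohomorf{1}{k(P_v),G}$, and chase the lift $\tilde f$ of $f$ around it. The proposal is correct and matches the paper's approach.
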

	
	\begin{proof}
	
	Let	$P_v$  comes from a unique integral point $\cP_v\in\cX(\cO_{k(P_v)})$. Then the lemma follows from the commutativity of the diagram
		\begin{equation*}
		\xymatrix{
		\cohomorf1{\cX,\cG}\ar[r]^{(\cP_v)\quad}\ar[d]&\cohomorf{1}{\cO_{k(P_v)},\cG}\ar[d]\\
		\cohomorf1{X,G}\ar[r]_{(P_v)\quad}&\cohomorf{1}{k(P_v),G}}.
		\end{equation*}
	\end{proof}

	Let $\vP^1(k,G)$ denote the restricted product $\prod'\cohomorf{1}{k_v,G}$ with respect to images of $\cohomorf{1}{\cO_v,\cG}$ in $\cohomorf{1}{k_v,G}$ for each finite place $v\in S$, then $\vP^1(k,G)$ is independent on the choice of $S$ or $\cG$ by a limit argument. With the Lemma \ref{loc:tv1}, we immediately obtain the following corollary.
	
	\begin{corollary}\label{corr1}
	If $X$ is smooth and geometrically integral over a number field $k$, $G$ is a commutative $k$-group of finite type, take an element  $f\in\cohomorf{1}{X,G}$ and an adelic 0-cycle $(x_v)_{v\in\Omega_k}\in\zcyclef{}{\vA}(X)$, then $((f,x_v))_{v\in\Omega_k}$ lies in $\vP^1(k,G)$.
	\end{corollary}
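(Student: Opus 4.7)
The plan is to combine Lemma \ref{loc:tv1} with a functoriality property of the corestriction along the finite flat extension $\cO_{k(P_v)}/\cO_v$. Since $\vP^1(k,G)$ is defined independently of the choice of $S$ or $\cG$ by the limit argument mentioned in the text, we are free to shrink $U=\spec{\cO_S}$ whenever convenient.

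First, I would fix the setup by enlarging $S$ so that simultaneously: (i) $G$ spreads out to a smooth commutative $\cO_S$-group scheme $\cG$; (ii) $X$ spreads out to a smooth $\cO_S$-scheme $\cX$ with geometrically integral fibres; (iii) the class $f\in\cohomorf{1}{X,G}$ lifts to $\cohomorf{1}{\cX,\cG}$; and (iv) $S$ is large enough that, for the given adelic 0-cycle $(x_v)$, every $v\notin S$ and every closed point $P_{v,i}$ in the support of $x_v$ actually comes from $\cX(\cO_{k(P_{v,i})})$ (this is exactly the defining property of $\zcyclef{}{\vA}(X)$). Since only finitely many conditions are being imposed, this enlargement is harmless.

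Next, for each $v\notin S$ and each $P_{v,i}$ in the support of $x_v$, Lemma \ref{loc:tv1} applied to the integral point $\cP_{v,i}\in\cX(\cO_{k(P_{v,i})})$ gives a class $\tilde f_{v,i}\in\cohomorf{1}{\cO_{k(P_{v,i})},\cG}$ whose restriction to $\cohomorf{1}{k(P_{v,i}),G}$ is $f(P_{v,i})$. The key step is then to push these integral classes down to $\cohomorf{1}{\cO_v,\cG}$ compatibly with the corestriction on generic fibres: since $\cO_{k(P_{v,i})}/\cO_v$ is a finite flat extension of discrete valuation rings, one has a corestriction (trace) morphism $\cohomorf{1}{\cO_{k(P_{v,i})},\cG}\to\cohomorf{1}{\cO_v,\cG}$ for the smooth commutative group scheme $\cG$, and the square
\begin{equation*}
\xymatrix{
\cohomorf{1}{\cO_{k(P_{v,i})},\cG}\ar[r]\ar[d]_{\cor{}} & \cohomorf{1}{k(P_{v,i}),G}\ar[d]^{\cor{k(P_{v,i})/k_v}}\\
\cohomorf{1}{\cO_v,\cG}\ar[r] & \cohomorf{1}{k_v,G}}
\end{equation*}
commutes by functoriality of the trace on étale cohomology. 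Consequently $\cor{k(P_{v,i})/k_v}(f(P_{v,i}))$ lies in the image of $\cohomorf{1}{\cO_v,\cG}\to\cohomorf{1}{k_v,G}$. Taking the $\bZ$-linear combination over $i$ with coefficients $n_{v,i}$, we conclude that $(f,x_v)$ itself lies in that image for every $v\notin S$, which is precisely what it means for $((f,x_v))_{v\in\Omega_k}$ to belong to the restricted product $\vP^1(k,G)$.

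The main obstacle is step three, namely exhibiting the integral corestriction and verifying that it is compatible with the generic-fibre corestriction. For finite \'etale extensions of DVRs this is immediate from the standard formalism, so one reduction would be to enlarge $S$ so that almost every residue extension $k(P_{v,i})/k_v$ in sight is unramified; but since one cannot control the residue fields of closed points appearing in an arbitrary adelic 0-cycle, one genuinely needs the finite flat (not necessarily \'etale) case, which holds for smooth commutative group schemes because the trace is available on representable sheaves along finite locally free morphisms. Everything else is a direct unwinding of definitions.
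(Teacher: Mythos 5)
Your proposal is correct and follows essentially the same route as the paper: enlarge $S$, apply Lemma \ref{loc:tv1} to get integral classes, and use the corestriction for the finite locally free extension $\cO_{k(P_v)}/\cO_v$ together with its compatibility (via the cocartesian square $\spec{k(P_v)}=\spec{\cO_{k(P_v)}}\times_{\spec{\cO_v}}\spec{k_v}$) with the corestriction on generic fibres. Your closing discussion of the flat-but-not-\'etale issue is exactly the point the paper handles by invoking that base-change compatibility.
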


	\begin{proof}Notations as above. We may assume that $S$ satisfies the property for $(x_v)_{v\in\Omega_k}$ (in its definition) holds by enlargeing $S$.
	Since $G$ is commutative, we may also assume $\cG$ is commutative over $\spec{\cO_S}$ by enlarging $S$. Since for a place $v\not\in S$ and a closed point $P_v$ in the support of $x_v$, the following diagram\begin{equation*}
	\xymatrix{\cO_{k(P_v)}\ar[r]&k(P_v)\\
	\cO_v\ar[u]\ar[r]&k_v\ar[u]}
	\end{equation*}is cocartesian. It induces a commutative diagram 
	\begin{equation*}
	\xymatrix{
	\cohomorf{1}{\cO_{k(P_v)},\cG}\ar[r]\ar[d]_{\cor{\cO_{k(P_v)}/\cO_v}}&\cohomorf{1}{k(P_v),G}\ar[d]^{\cor{k(P_v)/k_v}}\\
	\cohomorf{1}{\cO_v,\cG}\ar[r]&\cohomorf{1}{k_v,G}}.
	\end{equation*}Now the corollary is clear with the help of Lemma \ref{loc:tv1}.
	\end{proof}

	For a connected linear group $G$, we have the following lemma, which will be used in Section 4.

	\begin{lemma}\label{loc:tv2}
	Let $X$ be a smooth and geometrically integral over a number field $k$, $G$ a connected linear $k$-group. Let $f\in\cohomorf{1}{X,G}$ be a torsor. Let $(x_v)_{v\in\Omega_k}\in\zcyclef{}{\vA}(X)$ be an adelic 0-cycle, then there is a finite subset $S\subseteq\Omega_k$ such that for any $v\not\in S$ and any closed point $P_v$ lying in the support of $x_v$, the evaluation $f(P_v)\in\cohomorf{1}{k(P_v),G}$ is trivial.
	
	\end{lemma}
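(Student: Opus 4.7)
The plan is to spread $f$ out to an integral model, use Lemma \ref{loc:tv1} to reduce to the cohomology of a henselian local ring, and then invoke Hensel's lemma together with Lang's theorem.

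First I would fix a finite $S_0 \subseteq \Omega_k$ containing all archimedean places and choose the model data described in the setup preceding Lemma \ref{loc:tv1}: a smooth integral model $\cX$ of $X$ over $\cO_{S_0}$, a smooth group $\cO_{S_0}$-scheme $\cG$ with generic fibre $G$, and a class in $\cohomorf{1}{\cX,\cG}$ whose restriction to $X$ represents $f$. Since geometric connectedness of the fibres of a finite-type morphism is a constructible property on the base that holds at the generic point, after discarding finitely many closed points of $\spec{\cO_{S_0}}$ I may assume that every fibre of $\cG \to \spec{\cO_{S_0}}$ is geometrically connected. Enlarging $S_0$ further to some $S$, I can also ensure that $(x_v)_{v\in\Omega_k}$ is integral with respect to $\cX$ in the sense appearing in the definition of $\zcyclef{}{\vA}(X)$.

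Now let $v \notin S$ and let $P_v$ be a closed point in the support of $x_v$. By Lemma \ref{loc:tv1}, $f(P_v) \in \cohomorf{1}{k(P_v),G}$ lies in the image of the restriction map $\cohomorf{1}{\cO_{k(P_v)},\cG} \to \cohomorf{1}{k(P_v),G}$, where $\cO_{k(P_v)}$ is a complete discrete valuation ring with finite residue field $\kappa_v$. Because $\cG$ is smooth, the étale-cohomological form of Hensel's lemma identifies $\cohomorf{1}{\cO_{k(P_v)},\cG}$ with $\cohomorf{1}{\kappa_v,\cG_{\kappa_v}}$. Since $\cG_{\kappa_v}$ is a smooth connected algebraic group over the finite field $\kappa_v$, Lang's theorem forces this group to vanish. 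Hence $f(P_v) = 0$, as desired.

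The real content is entirely in the choice of $S$: after Lemma \ref{loc:tv1}, killing $f(P_v)$ is essentially automatic. The main (minor) obstacle is arranging that $\cG$ spreads out to a smooth group scheme with geometrically connected fibres over $\spec{\cO_S}$, which follows from standard generic properties of morphisms of finite type, but must be verified explicitly so that Lang's theorem applies to the special fibre at every $v\notin S$.
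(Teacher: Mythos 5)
Your proof is correct and follows essentially the same route as the paper: spread out to an integral model with connected fibres, invoke Lemma \ref{loc:tv1} to land in $\cohomorf{1}{\cO_{k(P_v)},\cG}$, and kill that group by Hensel's lemma and Lang's theorem. The only nitpick is that the residue field of $\cO_{k(P_v)}$ is that of $k(P_v)$ (a finite extension of the residue field of $k_v$), not the residue field of $k_v$ itself, but it is still finite so the argument is unaffected.
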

	\begin{proof}Take $S,\cX$ and $\cG$ as above. By enlarging $S$, we may assume that $S$ satisfies the property of $(x_v)_{v\in\Omega_k}$ (in its definition), $\cG$ is smooth affine  over $U$ and all fibres of $\cG$ over $U$ are connected. Since $\cohomorf{1}{\cO_{k(P_v)},\cG}=1$ by Lang's theorem and Hensel's lemma, moreover, $f(P_v)$ comes from $\cohomorf{1}{\cO_{k(P_v)},\cG}$ by Lemma \ref{loc:tv1}, the lemma follows.
	\end{proof}			
	

\subsection{Comparison of the algebraic \BM\ obstruction and the descent obstruction for universal torsors}\,

	By spectral sequence $\cohomorf p{k,\cohomorf q{X_{\overline k},\bG_m}}\Rightarrow\cohomorf{p+q}{X,\bG_m}$, there is an exact sequence \begin{equation*}
		0\rightarrow\brgpf0{X}\rightarrow\brgpu X\xrightarrow\rho\cohomorf{1}{k,\pic{X_{\overline k}}}\rightarrow\cohomorf{3}{k,\bG_m}=0.
		\end{equation*}Given a $\Gamma_k$-module $M$ and a homomorphism $\lambda:M\rightarrow\pic{X_{\overline k}}$ of $\Gamma_k$-modules, there is a homomorphism $\lambda_*:\cohomorf1{k,M}\rightarrow\cohomorf1{k,\pic{X_{\overline k}}}$ and a subgroup $\br_\lambda=\rho^{-1}(\lambda_*(\cohomorf1{k,M}))$ of $\brgpu X$.
		
	Let $G$ be a $k$-group of multiplicative type. By spectral sequence $\mathrm{Ext}_k^p(\hat G,R^p\pi_*\bG_m)\Rightarrow\mathrm{Ext}_X^{p+q}(\pi^*\hat G,\bG_m)$ where $\pi:X\rightarrow\spec{k}$ is the canonical map, one obtains the following fundamental exact sequence (see \cite[Thm 1.5.1]{CTS87} or \cite[Thm 2.3.6]{Sko01}) \begin{equation}\label{chi_seq}
	0\rightarrow\cohomorf1{k,G}\rightarrow\cohomorf1{X,G}\xrightarrow\chi\textrm{Hom}_{\text{grp}}(\hat G,\pic{X_{\overline k}})\xrightarrow\partial\cohomorf2{k,G}\rightarrow\cohomorf{2}{X,G}.
	\end{equation}An element $f$ of $\cohomorf1{X,G}$ corresponds a homomorphism $\lambda=\chi(f):\hat G\rightarrow\pic{X_{\overline k}}$ and we call this $\lambda$ the type of $f$. Take $M=\hat G$, one also has a subgroup $\br_\lambda=\br_{\chi(f)}$ of $\br_1(X)$ determined by $f$. 
	
	
	The following proof is an analogy of \cite[Proposition 3.1]{HS02} for 0-cycles.
	
		\begin{theorem}\label{main1}
		Let $k$ be a number field, $G$ a group of multiplicative type over $k$, and $X$ a smooth and geometrically integral variety over $k$. Let $f\in\cohomorf1{X,G}$, then one has  \begin{equation*}
		\zcyclef{}{\vA}(X)^{\br_{\chi(f)}}=\zcyclef{}{\vA}(X)^f.
		\end{equation*}
		\end{theorem}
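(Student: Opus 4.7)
The plan is to rewrite both $\zcyclef{}\vA(X)^{\br_{\chi(f)}}$ and $\zcyclef{}\vA(X)^f$ as orthogonality conditions on the single element $\bigl((f,x_v)^v\bigr)_v\in\vP^1(k,G)$, and to identify them via Poitou--Tate duality for groups of multiplicative type. This is the 0-cycle analogue of \cite[Proposition 3.1]{HS02}; the new ingredient compared with the rational-point case is the interaction of corestriction with the cup product.

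The main step is a local cup-product formula: for every place $v$, every $\alpha\in\br_{\chi(f)}$ with $\rho(\alpha)=\chi(f)_*(\beta)$ for some $\beta\in\cohomorf{1}{k,\hat G}$, and every $x_v\in\zcycle{}(X_v)$, one should have
\begin{equation*}
(\alpha,x_v)=\beta\cup(f,x_v)^v\quad\text{in}\ \br(k_v).
\end{equation*}
I would first establish this at a rational point $P\in X(L)$ over an arbitrary extension $L/k$, where it reads $\alpha(P)=\beta_L\cup f(P)$; this is a standard consequence of chasing the fundamental sequence \reff{chi_seq} against the definition of $\rho$ via the Hochschild--Serre spectral sequence for $\bG_m$. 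Viewing a closed point $P_v\in X_v$ as a rational point with residue field $L=k_v(P_v)$ and applying $\cor{L/k_v}$, the projection formula $\cor{L/k_v}(\beta|_L\cup c)=\beta\cup\cor{L/k_v}(c)$ converts the rational-point identity into $(\alpha,P_v)=\beta\cup(f,P_v)^v$, and $\bZ$-linear extension gives the formula for every 0-cycle.

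Summing over all places, the Brauer--Manin pairing $\BMcouple{\alpha,(x_v)}$ becomes the Poitou--Tate pairing of $\beta\in\cohomorf{1}{k,\hat G}$ with $\bigl((f,x_v)^v\bigr)_v\in\vP^1(k,G)$, the restricted-product membership being guaranteed by Corollary \ref{corr1}. Since $\cohomorf{3}{k,\bG_m}=0$, the map $\rho$ is surjective, so every $\beta$ is realised by some $\alpha\in\br_{\chi(f)}$; hence $(x_v)\in\zcyclef{}\vA(X)^{\br_{\chi(f)}}$ iff $\bigl((f,x_v)^v\bigr)_v$ is annihilated by all of $\cohomorf{1}{k,\hat G}$. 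Poitou--Tate duality for $G$ of multiplicative type then identifies this left kernel with the image of $\cohomorf{1}{k,G}\to\vP^1(k,G)$, which is exactly the defining condition of $\zcyclef{}\vA(X)^f$. The main obstacle is the local formula itself: at a rational point it is the classical compatibility of type with Brauer elements, but propagating to 0-cycles requires the careful projection-formula argument together with a verification that evaluation at a closed point commutes with the connecting maps in \reff{chi_seq} after base change to the residue field; once this is in hand, the global conclusion is a black-box application of arithmetic duality.
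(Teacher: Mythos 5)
Your proposal is correct and follows essentially the same route as the paper: both reduce the statement to Poitou--Tate duality for groups of multiplicative type (the paper cites Demarche's Theorem 6.3 for the exactness of $\cohomorf{1}{k,G}\to\vP^1(k,G)\to\cohomorf{1}{k,\hat G}^D$) after identifying $\BMcouple{\alpha\cup f,(x_v)}$ with $\sum_v\inv_v(\alpha\cup(f,x_v)^v)$ via the projection formula and using the compatibility $\rho(\alpha\cup f)=\chi(f)_*(\alpha)$ from Harari--Skorobogatov to see that $\br_{\chi(f)}$ is exhausted by the classes $\alpha\cup f$ modulo $\br_0(X)$. The only difference is that you spell out the corestriction/projection-formula step at closed points, which the paper leaves implicit in the notation.
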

	
	\begin{proof}
	Let $(x_v)_{v\in\Omega_k}\in\zcyclef{}{\vA}(X)$ be an adelic 0-cycle. Notice that $((f,x_v))_{v\in\Omega_k}$ lies in $\vP^1(k,G)$ by Corollary \ref{corr1}. By \cite[Thm 6.3]{Der11}, the adelic 0-cycle $(x_v)_{v\in\Omega_k}$ lies in $\zcyclef{}{\vA}(X)^f$ if and only if $\sum\limits_{v\in\Omega_k}\inv_v(\alpha\cup(f,x_v))=\BMcouple{\alpha\cup f,(x_v)_{v\in\Omega_k}}=0$ for each $\alpha\in\cohomorf{1}{k,\hat G}$. By \cite[Thm 1.4]{HS02}, for any element $\alpha\in\cohomorf1{k,\hat G}$, we have $\rho(\alpha\cup f)=\lambda_*(\alpha)$. Hence, any element $\beta\in\br_{\chi(f)}$ can be written as form $(\alpha\cup f)+\beta_0$ where $\beta_0\in\brgpf0 X$. Therefore, $(x_v)_{v\in\Omega_k}$ lies in $\zcyclef{}{\vA}(X)^f$ if and only if $(x_v)_{v\in\Omega_k}$ lies in $\zcyclef{}{\vA}(X)^{\br_{\chi(f)}}$. 
	\end{proof}

To complete the proof of Theorem \ref{cor1}, we also need the following lemma.

\begin{lemma}\label{06-14}
Let $X$ be a smooth, proper and geometrically integral variety over a number field $k$. Let $M$ be a $\Gamma_k$-module of finite type and $\lambda:M\rightarrow\pic{X_{\overline k}}$ a homomorphsim of $\Gamma_k$-modules. If $\zcyclef{1}{\vA}(X)^{\br_\lambda}\not=\emptyset$, then there is a torsor $f\in\cohomorf{1}{X,G}$ such that the type of $f$ is $\lambda$.
\end{lemma}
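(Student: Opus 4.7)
The plan is to recast the conclusion as the vanishing of an obstruction class and then read off that vanishing from the Brauer--Manin hypothesis via Poitou--Tate duality. Let $G$ be the $k$-group of multiplicative type with $\hat G = M$. By the fundamental exact sequence \eqref{chi_seq}, a torsor $f\in\cohomorf{1}{X,G}$ of type $\lambda$ exists if and only if $\partial(\lambda)=0$ in $\cohomorf{2}{k,G}$, so the entire task is to prove this vanishing.

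The first step is to show $\partial(\lambda)\in\Sha^2(k,G)$. Fix $(x_v)\in\zcyclef{1}{\vA}(X)^{\br_\lambda}$ and write $x_v=\sum_i n_i P_{v,i}$. The composition $\sum_i n_i\cor{k_v(P_{v,i})/k_v}\circ P_{v,i}^{*}$ is a retraction of the pullback $\cohomorf{2}{k_v,G}\to\cohomorf{2}{X_v,G}$, because on $\cohomorf{2}{k_v,G}$ it reduces to multiplication by $\sum_i n_i[k_v(P_{v,i}):k_v]=\deg(x_v)=1$ via the standard corestriction-restriction identity. Hence the pullback is injective, and the local version of \eqref{chi_seq} then forces $\partial_v(\lambda_v)=0$. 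Since $\partial_v(\lambda_v)$ is the localization of the global class $\partial(\lambda)$, it follows that $\partial(\lambda)\in\Sha^2(k,G)$.

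The second step is to invoke the Poitou--Tate perfect pairing $\Sha^2(k,G)\times\Sha^1(k,\hat G)\to\bQ/\bZ$. For any $\alpha\in\Sha^1(k,M)$ the class $\lambda_*(\alpha)\in\Sha^1(k,\pic{X_{\overline k}})$ lifts via $\rho$ to some $\beta\in\br_\lambda$. Since $\alpha_v=0$, also $\rho_v(\beta_v)=0$, so $\beta_v\in\br_0(X_v)$ is the image of some $\beta_v^0\in\brgp{k_v}$; pairing against $x_v$ of degree $1$ gives $\inv_v\BMcouple{\beta,x_v}=\inv_v(\beta_v^0)$. Using the local torsors $f_v$ of type $\lambda_v$ furnished by Step 1 together with the cup-product formula $\rho_v(\alpha\cup f_v)=\lambda_*(\alpha)_v$ from \cite[Thm 1.4]{HS02}, a diagram chase identifies $\sum_v\inv_v(\beta_v^0)$ with $\PTcouple{\partial(\lambda),\alpha}$. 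Combined with $\BMcouple{\beta,(x_v)}=0$, this forces $\PTcouple{\partial(\lambda),\alpha}=0$ for every $\alpha\in\Sha^1(k,\hat G)$, and nondegeneracy yields $\partial(\lambda)=0$.

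The main obstacle I foresee is the explicit identification of the Brauer--Manin pairing $\BMcouple{\beta,(x_v)}$ with the Poitou--Tate pairing $\PTcouple{\partial(\lambda),\alpha}$. For rational points this compatibility is built into the proof of \cite[Thm 6.1.2]{Sko01}, but transporting it to 0-cycles requires a careful cochain-level bookkeeping in which each pointwise evaluation is followed by a corestriction. The degree-$1$ hypothesis is what collapses each local contribution to the rational point case, so I expect the hurdle is technical bookkeeping rather than a genuinely new idea.
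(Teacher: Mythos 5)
Your proposal is correct and takes essentially the same route as the paper: reduce to the vanishing of $\partial(\lambda)$, observe that the degree-$1$ local $0$-cycles place $\partial(\lambda)$ in $\Sha^2(G)$ and collapse each local Brauer evaluation to a class $\beta_v^0=s_v(\beta_v)\in\br(k_v)$, and then kill $\partial(\lambda)$ by the perfect Poitou--Tate pairing against $\Sha^1(\hat G)$. The compatibility you flag as the main obstacle --- identifying $\sum_v\inv_v(\beta_v^0)$ with $\PTcouple{\partial(\lambda),\alpha}$ --- is exactly what the paper imports from \cite[Lemma 3.7]{ZH} (in the form $\PTcouple{\partial(\lambda),A}=\inv(\lambda_*(A))$ together with a snake-lemma computation of $\inv$), so your assessment that this step is known bookkeeping rather than a new idea matches how the paper itself handles it.
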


\begin{proof}
Due to the exact sequence \reff{chi_seq}, we only need to show that $\partial(\lambda)=0$. Let $G$ be a $k$-group of multiplicative type such that $\hat G=M$. Recall that in paper \cite[Theorem 5.7]{Der11} or \cite[Section 3]{ZH}, one may define a perfect Poitou--Tate pairing \begin{equation*}
\PTcouple{\cdot,\cdot}:\Sha^2(G)\times\Sha^1(\hat G)\rightarrow\bQ/\bZ.
\end{equation*}In \cite[Section 3]{ZH}, one also has an invariant map $\inv:\Sha^1(\Delta)\rightarrow\bQ/\bZ$. Here $\Delta$ is the mapping cone of the canonical morphism \begin{equation*}
		\bG_{m,k}[1]\rightarrow(\tau_{\leq1}R\pi_*(\bG_{m,X}))[1].
	\end{equation*}Since $\overline k[X]^*=\overline k^*$, the complex $\Delta$ is isomorphic to $\pic{X_{\overline k}}[0]$ in the derived category $\mathrm{D}(k)$. By \cite[Lemma 3.7]{ZH}, they are connected by a formula that $\PTcouple{\partial(\lambda),A}=\inv(\lambda_*(A))$ for any $A\in\Sha^1(\hat G)$ where $\lambda_*$ is the induced homomorphism $\cohomorf{1}{k,\hat G}=\cohomorf{1}{k,M}\rightarrow\cohomorf{1}{k,\pic{X_{\overline k}}}$. Take an adelic 0-cycle $(x_v)_{v\in\Omega_k}\in\zcyclef{1}\vA(X)^{\br_\lambda}$ and take a pre-image $\alpha\in\br_1(X)$ of $\lambda_*(A)\in\Sha^1(\Delta)\subseteq\brgpa{X}$, then one obtains that $\inv(\lambda_*(A))=\sum\limits_{v\in\Omega_k}\inv_v(s_v(\alpha(X_v)))=0$ by snake lemma with the fact that $\alpha$ lies in $\br_\lambda$. Here $s_v$ is the retraction $\br_1(X_v)\rightarrow\br(k_v)$ defined by the 0-cycle $x_v$ of degree 1. 
	
	A detailed proof can also be found in \cite[Proposition 3.8]{ZH} by slight modifications.  
\end{proof}

\begin{definition}
Let $X$ be a smooth, proper and geometrically integral variety over a number field $k$ such that $\pic{X_{\overline k}}$ is a $\Gamma_k$-module of finite type. Let $G$ be a $k$-group of multiplicative type such that $\hat G\simeq\pic{X_{\overline k}}$, then a universal $X$-torsor is a  torsor $f\in\cohomorf1{X,G}$ such that its type $\chi(f)$ is the canonical isomorphism $\hat G\simeq\pic{X_{\overline k}}$. Notice that the universal $X$-torsor is unique up to a twist of $\sigma\in\cohomorf{1}{k,G}$ due to the excat sequence \reff{chi_seq}.
\end{definition}

\begin{theorem}\label{cor1}
		Let $X$ be a smooth, proper and geometrically integral variety over a number field $k$ such that $\pic{X_{\overline k}}$ is of finite type. 
		
	(1) Let $f:Y\rightarrow X$ be a universal torsor over $X$, then we have  \begin{equation*}
			\zcyclef{}{\vA}(X)^{\br_1}=\zcyclef{}{\vA}(X)^f.
			\end{equation*} 
	
	(2) If $\zcyclef{1}{\vA}(X)^{\br_1}\neq\emptyset$, then universal $X$-torsors exist.
		\end{theorem}
		
\begin{proof}
The part (1) is no other than Theorem \ref{main1} and the part (2) is immediately the corollary of Lemma \ref{06-14}.
\end{proof}

%
%
%
%
%
	
\section{abelianization of Galois cohomology of connected reductive group}
	In this section, we recall basic concepts of abelian Galois cohomology, see \cite{Ko86}, \cite{Bo89} or \cite{Bo98} for a referrence. Throughout this section, all fields are of characteristic 0 and all algebraic groups over some field are linear.

\subsection{Abelian Galois cohomology of reductive groups}\,

	Let $G$ be a connected reductive group over a field $K$ (it will be taken to be a number field or a local field later). Let $G^{ss}$ denote the derived group of $G$ (it is semi-simple) and let $G^{sc}$ be the universal covering group of $G^{ss}$ (it is simply connected). Let $G^{tor}=G/G^{ss}$ be the maximal quotient torus of $G$. Let $\psi$ denote the composition of morphisms\begin{equation}\label{15}
			G^{sc}\rightarrow G^{ss}\hookrightarrow G.
			\end{equation}and denote $G^\bullet=(G^{sc}\xrightarrow{\psi}G)$.
			
	\begin{definition}\label{cdef}
	Let $Z$ be the center of $G$ and let $Z^{(sc)}=\psi^{-1}(Z)$ be the center of $G^{sc}$. Denote $Z^\bullet=(Z^{(sc)}\rightarrow Z)$. It is a complex of algebraic $K$-groups, putting $Z$ in degree 0 and $Z^{(sc)}$ in degree $-1$. We identify $Z$ (and other commutative algebraic $K$-groups) with the $\text{Gal}(\overline K/K)$-module associated to it canonically. Define $\cohomorab{i}{K,G}=\cohomorh{i}{K,Z^\bullet}$ for $i\geq-1$ where $\cohomorh{i}{K,\ast}$ is the Galois hyper-cohomology.
	\end{definition}

	Let $T\subseteq G$ be a maximal torus, then $T^{(sc)}=\psi^{-1}(T)\rightarrow T$ is complex of $K$-tori, putting $T$ in degree 0 and $T^{(sc)}$ in degree $-1$. Denote $T^\bullet=(T^{(sc)}\rightarrow T)$.
	
	\begin{lemma}[\cite{Bo98}, Lemma  2.4.1]\label{qs1}
	The canonical injection 
	\begin{equation*}
	Z^\bullet=(Z^{(sc)}\xrightarrow{\psi_Z} Z)\rightarrow (T^{(sc)}\xrightarrow{\psi_T} T)=T^\bullet
	\end{equation*}is a quasi-isomorphism, i.e. induced morphisms $\kernel{\psi_Z}\rightarrow\kernel{\psi_T}$ and  $\cokernel{\psi_Z}\rightarrow\cokernel{\psi_T}$ are isomorphisms. Hence, this quasi-isomorphism induces a canonical isomorphisms $\cohomorab{i}{K,G}\simeq\cohomorh{i}{K,T^\bullet}$ of Galois hyper-cohomology for each $i\geq-1$.
	\end{lemma}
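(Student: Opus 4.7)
The plan is to verify the quasi-isomorphism by directly checking that the induced maps $\kernel{\psi_Z}\to\kernel{\psi_T}$ and $\cokernel{\psi_Z}\to\cokernel{\psi_T}$ are isomorphisms, and then to deduce the statement about hypercohomology from the standard fact that a quasi-isomorphism of bounded complexes induces isomorphisms on all hyper-cohomology groups. For the kernels, the key input is that $\kernel{\psi}$, i.e.\ the fundamental group of $G^{ss}$, is a finite central subgroup of $G^{sc}$. In particular $\kernel{\psi}\subseteq Z^{(sc)}\subseteq T^{(sc)}$, so restricting $\psi$ to $Z^{(sc)}$ and to $T^{(sc)}$ yields the same kernel $\kernel{\psi}$, and the induced map $\kernel{\psi_Z}\to\kernel{\psi_T}$ is the identity.

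For the cokernels, I would first identify $\psi(Z^{(sc)})=Z\cap G^{ss}$ and $\psi(T^{(sc)})=T\cap G^{ss}$. The inclusions $\subseteq$ are clear; for the reverse, one uses that $\psi$ surjects $G^{sc}$ onto $G^{ss}$, so any element of $Z\cap G^{ss}$ (respectively $T\cap G^{ss}$) has a preimage $g\in G^{sc}$, which automatically lies in $\psi^{-1}(Z)=Z^{(sc)}$ (respectively $\psi^{-1}(T)=T^{(sc)}$). Next I would invoke that the radical $R(G)$ lies in $Z\cap T$ and $G=R(G)\cdot G^{ss}$, which yields $G=Z\cdot G^{ss}=T\cdot G^{ss}$. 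Consequently the two natural maps
\begin{equation*}
Z/(Z\cap G^{ss})\hookrightarrow G/G^{ss}=G^{tor}\qquad\text{and}\qquad T/(T\cap G^{ss})\hookrightarrow G/G^{ss}=G^{tor}
\end{equation*}
are both isomorphisms, and they are compatible with the inclusion $Z\hookrightarrow T$, so $\cokernel{\psi_Z}\to\cokernel{\psi_T}$ is an isomorphism as well.

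The only subtle point is that $Z$ may be disconnected, so the surjectivity of $Z\to G^{tor}$ has to be deduced via the connected radical $R(G)\subseteq Z$ rather than from $Z$ directly; beyond that the verification is essentially formal. Once the quasi-isomorphism $Z^\bullet\hookrightarrow T^\bullet$ is in hand, the induced isomorphism $\cohomorab{i}{K,G}=\cohomorh{i}{K,Z^\bullet}\xrightarrow{\sim}\cohomorh{i}{K,T^\bullet}$ for each $i\geq-1$ follows immediately from the invariance of hyper-cohomology under quasi-isomorphism (equivalently, from the five lemma applied to the hypercohomology spectral sequence of the two-term complex).
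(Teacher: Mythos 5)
Your proof is correct: the identification of both kernels with the finite central subgroup $\kernel{\psi}$, the computation of the cokernels as $Z/(Z\cap G^{ss})$ and $T/(T\cap G^{ss})$ mapping compatibly and isomorphically onto $G^{tor}$ via $G=R(G)\cdot G^{ss}$, and the passage to hypercohomology are all sound. The paper itself gives no argument here but simply cites Borovoi's Lemma 2.4.1, and your reasoning is essentially the standard proof found there, so there is nothing to compare beyond noting that you have filled in the details the paper delegates to the reference.
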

	
	Let $\varphi:G_1\rightarrow G_2$ be a morphism of connected reductive groups. Let $Z_j^\bullet$ be the complex associated to centers of $G_j^\bullet$ for $j=1,2$. Let $T_2$ be a maximal torus of $G_2$ containing $\varphi(Z_1)$, then one induces a diagram\begin{equation*}
	Z_1^\bullet\rightarrow T_2^\bullet\leftarrow Z_2^\bullet
	\end{equation*}and then a diagram\begin{equation*}
	\cohomorab{i}{K,G_1}\rightarrow\cohomorh{i}{K,T_2^\bullet}\simeq\cohomorab{i}{K,G_2}
	\end{equation*}by Lemma \ref{qs1}. Let $\varphi_*$ be the morphism $\cohomorab{i}{K,G_1}\rightarrow\cohomorab{i}{K,G_2}$ above. It is independent on the choice of $T_2$ by \cite[Proposition 2.6.5 and Proposition 2.8]{Bo98}. 
	
	Furthermore, let \begin{equation*}
	\xymatrix{1\ar[r]& G_1\ar[r]& G_2\ar[r]& G_3\ar[r]& 1}
	\end{equation*}be an exact sequence of  reductive $K$-groups, one induces a long hyper-cohomology exact sequence\begin{equation*}
		\xymatrix{\cdots\rightarrow\cohomorab{i}{K,G_1}\rightarrow\cohomorab{i}{K,G_2}\rightarrow\cohomorab{i}{K,G_3}\xrightarrow{\delta_i}\cohomorab{i+1}{K,G_1}\rightarrow\cdots}
		\end{equation*}by \cite[Proposition 2.10]{Bo98}. Hence, functors with the connecting homomorphisms $\{\cohomorab{i}{K,\bullet},\delta_i\}$ induce a cohomological $\delta$-functor from the category of connected reductive $K$-groups to the category of abelian groups. 
	
	\begin{remark}\label{torus} Here are some examples. 
	
	(1) If $G$ is a torus, then $\cohomorf{i}{K,G}\simeq\cohomorab{i}{K,G}$ for any $i$.

	(2) If $G$ is semi-simple, then $\cohomorab{i}{K,G}\simeq\cohomorf{i+1}{K,\kernel{\psi}}$ for any $i$.
	
	(3) If $G^{ss}$ is simply connected, then $\cohomorab{i}{K,G}\simeq\cohomorab{i}{K,G^{tor}}$ for any $i$.
	\end{remark}

\subsection{Abelianization map in degree 1}\,

	To construct the abelianization map $\ab^1$ in degree 1, we have at least two ways. One way needs crossed modules, introduced by J.H.C. Whitehead \cite{Wh49}, then maps $\ab^1$ are defined in terms of cocycles, see \cite[Sect 3]{Bo98} for details. The other way uses $z$-extensions of reductive groups, introduced by Langlands \cite{La79}, \cite{La89}, see also \cite{Ko86} or \cite{Bo89}. Both ways are essentially equivalent due to Theorem \ref{abthm1} below. We sketch the construction by $z$-extensions in the following.

	First, for any $K$-torus $T$, we have canonical isomorphisms \begin{equation}\label{ab1}
	\ab^i_T:\cohomorf{i}{K,T}\simeq\cohomorab{i}{K,T}
	\end{equation}by Remark \ref{torus}, (1). These isomorphisms form an isomorphism of cohomological $\delta$-functors $T\leadsto\cohomorf{i}{K,T}$ and $T\leadsto\cohomorab{i}{K,T}$ from the category of $K$-tori to the category of abelian groups. We omit subscript $T$ in $\ab^i_T$ if $T$ is clear.
	
	\begin{theorem}[\cite{Bo89}, Theorem 3.2]\label{abthm1}
	These isomorphisms $\ab^1$ above can be uniquely prolonged to morphisms of functors 
	\begin{equation*}
	\xymatrix{\ab^1=\ab^1_G:\cohomorf{1}{K,G}\ar[r]&\cohomorab{1}{K,G}}
	\end{equation*}from the category of reductive $K$-groups to the category of pointed sets. This also means each $\ab^1$ is a morphism of pointed sets.
	\end{theorem}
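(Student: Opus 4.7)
The plan is to construct $\ab^1_G$ by reducing to the case of tori via $z$-extensions, and to establish uniqueness by exploiting the functoriality requirement. Recall that a $z$-extension of a connected reductive $K$-group $G$ is a central short exact sequence of connected reductive groups
\begin{equation*}
1\to T_1\to H\to G\to 1
\end{equation*}
where $T_1$ is a quasi-trivial $K$-torus (in particular $\cohomorf{1}{K,T_1}=0$) and $H^{ss}=H^{sc}$. Such extensions exist for every connected reductive $K$-group, a classical construction of Langlands.

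First I would construct $\ab^1_G$ as follows. Choose a $z$-extension $1\to T_1\to H\to G\to 1$. Since $H^{ss}$ is simply connected, Remark \ref{torus}(3) provides a canonical isomorphism $\cohomorab{1}{K,H}\simeq\cohomorab{1}{K,H^{tor}}$, and composing with $\ab^1_{H^{tor}}$ from \reff{ab1} (available because $H^{tor}$ is a torus) together with the functorial map induced by $H\to H^{tor}$ yields an unambiguous map
\begin{equation*}
\ab^1_H:\cohomorf{1}{K,H}\to\cohomorab{1}{K,H}.
\end{equation*}
Given $\xi\in\cohomorf{1}{K,G}$, the vanishing $\cohomorf{1}{K,T_1}=0$ combined with the long exact sequence shows that $\cohomorf{1}{K,H}\to\cohomorf{1}{K,G}$ is surjective, so $\xi$ lifts to some $\tilde\xi\in\cohomorf{1}{K,H}$; I then set $\ab^1_G(\xi)$ to be the image of $\ab^1_H(\tilde\xi)$ under the functorial pushforward $\cohomorab{1}{K,H}\to\cohomorab{1}{K,G}$ from the $\delta$-functor structure.

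Next I would verify well-definedness and functoriality. Two lifts $\tilde\xi_1,\tilde\xi_2$ of $\xi$ are related, after twisting $H$ by a cocycle representing $\xi$, by the torsor action of $\cohomorf{1}{K,{}_\xi T_1}=\cohomorf{1}{K,T_1}=0$ (using centrality of $T_1$ in the twisted group), and therefore agree. Independence of the choice of $z$-extension is proved by comparing two $z$-extensions $H_1,H_2$ via a third $z$-extension $H_3$ of $G$ mapping to both — such a common refinement exists by taking the connected component of the fibre product — and using the functoriality of $\ab^1_{H_i}$ for groups with simply connected derived subgroup, which reduces everything to the established isomorphism for tori. Functoriality of the resulting $\ab^1$ under a morphism $\varphi:G_1\to G_2$ is checked by picking compatible $z$-extensions of $G_1$ and $G_2$ (lifting $\varphi$ to $\tilde\varphi:H_1\to H_2$) and appealing to the naturality of the $\delta$-functor $\cohomorab{*}{K,\cdot}$ established in Subsection 3.1.

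Finally, uniqueness follows from two observations. On tori the map is forced by \reff{ab1}. For any reductive $G$ with a $z$-extension $1\to T_1\to H\to G\to 1$, functoriality applied to $H\to G$ together with surjectivity of $\cohomorf{1}{K,H}\to\cohomorf{1}{K,G}$ means $\ab^1_G$ is determined by $\ab^1_H$; and $\ab^1_H$ is in turn forced by functoriality applied to $H\to H^{tor}$ combined with the canonical identification $\cohomorab{1}{K,H}\simeq\cohomorab{1}{K,H^{tor}}$ of Remark \ref{torus}(3), so it is dictated by the isomorphism $\ab^1_{H^{tor}}$ on tori. I expect the main obstacle to be the well-definedness with respect to the choice of $z$-extension, since this requires carefully juggling two fibre-product style comparisons and verifying compatibility of the identification in Remark \ref{torus}(3) under morphisms of groups with simply connected derived subgroup; once this is in place, uniqueness and functoriality follow essentially formally.
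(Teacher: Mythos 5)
Your overall architecture --- reduce to tori via $H\to H^{tor}$ for groups with simply connected derived subgroup, pass to general $G$ through $z$-extensions, get uniqueness of the lift from $\cohomorf{1}{K,T_1}=0$, and force uniqueness of $\ab^1_G$ by functoriality --- is exactly the route the paper sketches (following Borovoi). But there is a genuine error at the foundation of your construction: you claim that for a single fixed $z$-extension $1\to T_1\to H\to G\to 1$ the map $\cohomorf{1}{K,H}\to\cohomorf{1}{K,G}$ is surjective because $\cohomorf{1}{K,T_1}=0$. This is false. The vanishing of $\cohomorf{1}{K,T_1}$ controls the \emph{fibres} of that map (it makes a lift unique when one exists); the obstruction to the \emph{existence} of a lift lives in $\cohomorf{2}{K,T_1}$, and for a quasi-trivial torus this group is a product of Brauer groups $\br(L_i)$, which does not vanish. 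Concretely, $1\to\bG_m\to\mathrm{GL}_n\to\mathrm{PGL}_n\to1$ is a $z$-extension, $\cohomorf{1}{K,\mathrm{GL}_n}$ is trivial by Hilbert 90, yet $\cohomorf{1}{K,\mathrm{PGL}_n}$ is non-trivial for most fields of interest. So no single $z$-extension can serve all classes at once, and your definition of $\ab^1_G(\xi)$ does not yet make sense for a class $\xi$ that fails to lift through your chosen $H$.

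The repair is precisely the notion of an $f$-lifting $z$-extension: for each individual class $f\in\cohomorf{1}{K,G}$ one must produce a $z$-extension, depending on $f$, through which $f$ lifts. This is Lemma \ref{ext-z}: one splits both $G$ and $f$ over a finite normal extension $L'/K$ and takes the kernel to be a power of $\wres{L'/K}(\bG_m)$, so that the coboundary obstruction, which dies after restriction to $L'$, is forced to vanish already over the base. Once this classwise existence statement replaces your surjectivity claim, the rest of your argument goes through essentially as written: the lift is unique by centrality of $T_1$ and $\cohomorf{1}{K,T_1}=0$; independence of the chosen $z$-extension follows from your fibre-product comparison, provided you note that the class does lift through the fibre product because the two coboundary obstructions there are pulled back from obstructions that already vanish; and uniqueness of $\ab^1_G$ is forced by functoriality applied, for each class separately, to some $z$-extension through which that particular class lifts, rather than to one extension chosen once and for all.
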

	
	We give the definition of maps $\ab^1$ in the following. After the definition is given, the proof of Theorem \ref{abthm1} will be complished.

	Now, we extend $\ab^1$ with the following definition of $z$-extensions.
	
	\begin{definition}
	Let $G$ be a connected reductive $K$-group. A central extension \begin{equation}\label{z-ext}
	1\rightarrow F\rightarrow H\xrightarrow{p} G\rightarrow 1
	\end{equation}of $G$ is called a $z$-extension if $H^{ss}$ is simply connected and $F$ is a product of tori of the form $\wres{L/K}(\bG_m)$ for finite extensions $L/K$.
	
	Furthermore, let $f\in\cohomorf{1}{K,G}$ be a torsor, then the $z$-extension \reff{z-ext} is called a $f$-lifting $z$-extension if $f$ comes from $\cohomorf{1}{K,H}$ along the canonical map $\cohomorf{1}{K,H}\xrightarrow{p_*}\cohomorf{1}{K,G}$.
	\end{definition}	

	The following two lemmas guarantee the existence of $f$-lifting $z$-extensions and lemma \ref{exist2} slightly generates \cite[Corollary 3.4.5]{Bo89} to fit our requirements.
	
	\begin{lemma}[\cite{Bo89}, Lemma 3.4.2]\label{exist1} Let $G$ be a connected reductive $K$-group and $\psi':G^{\text{sc}}\times Z^o\rightarrow G,(g,z)\mapsto\psi(g).z$ be the canonical covering of $G$ where $Z^o$ is the neutral connected component of the center $Z$ of $G$. Set $A=\kernel{\psi'}$ (it is a finite abelian group) and let $L/K$ be a finite Galois extension such that $\text{Gal}(\overline L/L)$ acts on the character group $\chi^*(A)$ trivially, then there is a $z$-extension \begin{equation*}
	1\rightarrow F\rightarrow H\rightarrow G\rightarrow 1
	\end{equation*}such that $F\simeq(\wres{L/K}(\bG_m))^n$
	
	\end{lemma}
	
	\begin{remark}
	The proof of Lemma \ref{exist1} is the same as the one given in \cite{Bo89}. Since \cite{Bo89} is not published, we put it here to convince readers.
	\end{remark}
	
	\begin{proof}
	Let $\Delta=\text{Gal}(L/K)$, then there is a surjective homomorphism $s:M\rightarrow\chi^*(A)$ of $\Delta$-moduls, where $M$ is a $\bZ[\Delta]$-free module. Let $F$ be a $K$-torus such that $\chi^*(F)=M$, then it is a torus of the form $(\wres{L/K}(\bG_m))^n$. Since $s$ is surjective, the induced homomorphism $s^*:A\rightarrow F$ is injective. We set $H=(G^{sc}\times Z^o\times F)/A$ and define $p:H\rightarrow G=(G^{sc}\times Z^o)/A$ to be the canonical projection induced from\begin{equation*}
	G^{sc}\times Z^o\times F\rightarrow G^{sc}\times Z^o.
	\end{equation*}Then $\kernel{p}=F$ and $H^{ss}\simeq G^{sc}$ (because $A\hookrightarrow F$ is injective). The lemma is proved.
	\end{proof}
	
	\begin{lemma}[\cite{Bo89}, Corollary 3.4.5 $+\epsilon$] \label{exist2} 
	Let $G$ be a connected reductive $K$-group, $L$ a finite extension of $K$, $f\in\cohomorf{1}{L,G}$ be an element, then there exists a $z$-extension $H\rightarrow G$ over $K$ such that $f$ comes from $\cohomorf{1}{L,H}$.  
	\end{lemma}
	
	\begin{proof}
	Take a finite extension $L'$ of $L$ such that $L'$ is a Galois extension of $K$, $f$ maps to $1\in\cohomorf{1}{L',G}$ and $L'$ splits $G$. Since $L'$ splits $G$, $L'/K$ satisfies the condition of Lemma \ref{exist1}. Hence, we apply Lemma \ref{exist1} to the extension $L'/K$, then there is  a $z$-extension \begin{equation*}
	\xymatrix{1\ar[r]&F\ar[r]&H\ar[r]&G\ar[r]&1}
	\end{equation*}with $F\simeq(\wres{L'/K}(\bG_m))^n$. Notice that $F_L\simeq(\wres{L'/L}(\bG_m))^{n.[L:K]}$. Consider the commutative diagram with exact rows \begin{equation*}
	\xymatrix{\cohomorf{1}{L,H}\ar[r]\ar[d]&\cohomorf{1}{L,G}\ar[d]\ar[r]&\cohomorf{2}{L,F}\ar[d]\\
	\cohomorf{1}{L',H}\ar[r]&\cohomorf{1}{L',G}\ar[r]&\cohomorf{2}{L',F}}.
	\end{equation*}By the choice of $Z$, the restriction homomorphism $\cohomorf{2}{L,F}\rightarrow\cohomorf{2}{L',F}$ is injective. Therefore, $f$ comes from $\cohomorf{1}{L,H}$ by a diagram chasing.
	\end{proof}
	
	Here is the construction of $\ab^1$ by $z$-extensions. 
	
	For  sake of language, let $\cR$ be the category of connected reductive $K$-groups and let $\cR^o$ be the full sub-category of $\cR$ collecting those $K$-groups $G$ such that $G^{ss}$ is simply connected.
	
	Firstly, we extend $\ab^1$ from the category of $K$-tori to $\cR^o$. For a reductive group $G\in\cR^o$, the diagram \begin{equation*}
	\xymatrix{\cohomorf{1}{K,G}\ar[r]\ar @{-->}[d]_{\ab^1_G}&\cohomorf{1}{K,G^{tor}}\ar[d]^{\ab^1_{G^{tor}}}_\sim\\
	\cohomorab{1}{K,G}\ar[r]^\sim&\cohomorab{1}{K,G^{tor}}}
	\end{equation*}forces us to define $\ab^1_G$ to be the composition\begin{equation*}
	\cohomorf{1}{K,G}\rightarrow\cohomorf{1}{K,G^{tor}}\simeq\cohomorab{1}{K,G}.
	\end{equation*}This clearly extends $\ab^1$ to $\cR^o$.
	
	For a reductive group $G\in\cR$, given an element $f\in\cohomorf{1}{K,G}$, take a $f$-lifting $z$-extension \begin{equation*}
	1\rightarrow F\rightarrow H\xrightarrow pG\rightarrow1.
	\end{equation*}Then we have a commutative diagram\begin{equation*}
	\xymatrix{\cohomorf{1}{K,F}\ar[r]\ar[d]^{\ab^1_F}_\sim&\cohomorf{1}{K,H}\ar[r]^{p_*}\ar[d]^{\ab^1_H}&\cohomorf{1}{K,G}\ar@{-->}[d]^{\ab^1_G}\\
	\cohomorab{1}{K,F}\ar[r]&\cohomorab{1}{K,H}\ar[r]_{p_*}&\cohomorab{1}{K,G}}.
	\end{equation*}Notice $H\in\cR^o$, so the map $\ab^1_H$ is defined. Since $f$ comes from an element $h\in\cohomorf{1}{K,H}$, this forces us to define $\ab^1_G(f)=p_*(\ab^1_H(h))$. Also notice that $\cohomorf1{K,F}=0$ by Shapiro's lemma and Hilbert 90, such lifting $h$ of $f$ is unique. 
	
	Furthermore, the construction of $\ab^1(f)$ does not rely on the choice of $z$-extension $H\rightarrow G$. In fact, let $p_1:H_1\rightarrow G$ and $p_2:H_2\rightarrow G$ be two $f$-lifting $z$-extensions. Set $H=H_1\times_GH_2$ (the fibre product). Then $p:H\rightarrow G$ is surjective and $\kernel p=\kernel{p_1}\times\kernel{p_2}$. We see that $p$ is a $z$-extension. Since the set of cocycles $Z^1(K,H)$ is the fibre product of $Z^1(K,H_1)$ and $Z^1(K,H_2)$, we conclude that $p$ is also an $f$-lifting $z$-extension. Let $h_1,h_2$ and $h$ be the lifting of $f$ in $\cohomorf{1}{K,H_1},\cohomorf{1}{K,H_2}$ and $\cohomorf{1}{K,H}$ respectively and let $q_1:H\rightarrow H_1$ and $q_2:H\rightarrow H_2$ be the canonical projective, then $h_1=q_{1,*}(h)$ and $h_2=q_{2,*}(h)$ by the uniqueness of a lifting. Therefore, \begin{equation*}
	p_{1,*}(\ab_{H_1}^1(h_1))=p_{1,*}(q_{1,*}(\ab_{H}^1(h)))=p_{2,*}(q_{2,*}(\ab_{H}^1(h)))=p_{2,*}(\ab_{H_2}^1(h_2)).
	\end{equation*}Hence, $\ab^1_G(f)$ is well-defined.
	
	Now, the proof of Theorem \ref{abthm1} is clear.
	
	
	
	These  maps $\ab^1$ define morphism of cohomology exact sequences in the following sense.
	
		\begin{theorem}[\cite{Bo89}, Proposition 3.10 or \cite{Bo98}, Proposition 3.12]\label{abthm2}
		Let \begin{equation*}
			\xymatrix{1\ar[r]& G_1\ar[r]& G_2\ar[r]& G_3\ar[r]& 1}
			\end{equation*}be an exact sequence of connected reductive $K$-groups.
			
		(1) Then the diagram \begin{equation*}
			\xymatrix{\cohomorf{1}{K,G_1}\ar[r]\ar[d]^{\ab^1}&\cohomorf{1}{K,G_2}\ar[r]\ar[d]^{\ab^1}&\cohomorf{1}{K,G_3}\ar[d]^{\ab^1}\\
			\cohomorab{1}{K,G_1}\ar[r]&\cohomorab{1}{K,G_2}\ar[r]&\cohomorab{1}{K,G_3}}
			\end{equation*}is commutative and rows are exact.
			
		(2) If moreover $G_1$ is a torus, then the diagram 
		\begin{equation*}
				\xymatrix{\cohomorf{1}{K,G_2}\ar[r]\ar[d]^{\ab^1}&\cohomorf{1}{K,G_3}\ar[d]^{\ab^1}\ar[r]&\cohomorf{2}{K,G_1}\ar[d]_{\sim}^{\ab^2}\\
				\cohomorab{1}{K,G_2}\ar[r]&\cohomorab{1}{K,G_3}\ar[r]_{\delta_1}&\cohomorab{2}{K,G_1}}
				\end{equation*}is commutative and rows are exact.
		
		\end{theorem}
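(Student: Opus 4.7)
My plan is to establish commutativity of every square from the functoriality of $\ab^1$ provided by Theorem \ref{abthm1}, then deduce exactness of the bottom rows by realising them as the hyper-cohomology long exact sequence attached to a short exact sequence of complexes of tori, reducing to the torus case where $\cohomorab{i}{K,-}$ and $\cohomorf{i}{K,-}$ agree by Remark \ref{torus}.

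First I would dispatch commutativity. Since Theorem \ref{abthm1} asserts that $\ab^1$ is a morphism of functors from connected reductive $K$-groups to pointed sets, each square in (1) and the left square in (2) is exactly the naturality square associated with the corresponding morphism $G_i\to G_{i+1}$. Exactness of the top rows is the standard long exact sequence of non-abelian Galois cohomology as pointed sets; the prolongation to $\cohomorf{2}{K,G_1}$ in (2) is valid precisely because $G_1$ is abelian, so that an honest coboundary map is available.

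For exactness of the bottom row of (1), I would reduce to tori. Choose a maximal torus $T_2\subset G_2$ so that $T_1:=T_2\cap G_1$ is a maximal torus of $G_1$ and $T_3:=T_2/T_1$ is a maximal torus of $G_3$; Lemma \ref{qs1} identifies $\cohomorab{i}{K,G_j}$ with $\cohomorh{i}{K,T_j^\bullet}$, so the task becomes producing a short exact sequence of complexes $T_1^\bullet\to T_2^\bullet\to T_3^\bullet$ yielding the desired long exact sequence in hyper-cohomology. The delicate point is that the sequence $1\to T_1^{(sc)}\to T_2^{(sc)}\to T_3^{(sc)}\to 1$ on simply connected covers need not be exact. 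To repair this, I would first replace $G_3$ by a $z$-extension $H_3\to G_3$ with $H_3^{ss}$ simply connected and pull back to $H_2:=G_2\times_{G_3}H_3$; the resulting sequence $1\to G_1\to H_2\to H_3\to 1$ now gives an honest short exact sequence of complexes of centers (or of chosen maximal tori), after which hyper-cohomology produces the long exact sequence. Remark \ref{torus} (3) ensures that this $z$-extension does not perturb the abelianized groups, so the conclusion transfers back to $G_3$.

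Part (2) will then follow by prolonging the same hyper-cohomology long exact sequence one step further to $\cohomorab{2}{K,G_1}$, which for the torus $G_1$ equals $\cohomorf{2}{K,G_1}$ by Remark \ref{torus} (1), giving the isomorphism $\ab^2_{G_1}$ on the right-hand vertical arrow; commutativity of the right square with the coboundary $\delta_1$ is the compatibility of the connecting maps for non-abelian and abelianized Galois cohomology, which can be verified at the cocycle level or inherited directly from the $z$-extension reduction. The main obstacle throughout is the non-exactness of the simply-connected covers: $z$-extensions are the technical device that salvages the reduction to tori, and carefully tracking compatibility through them — both for the maps $\ab^1$ on the non-abelian side and for the quasi-isomorphisms of complexes $Z^\bullet\simeq T^\bullet$ on the abelianized side — is the crux of the proof.
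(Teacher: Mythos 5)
First, a point of comparison: the paper gives no proof of this statement at all --- it is imported verbatim from Borovoi (\cite[Proposition 3.10]{Bo89}, \cite[Proposition 3.12]{Bo98}), and the paper explicitly defers to those references. So your sketch can only be judged on its own terms. The parts you derive from Theorem \ref{abthm1} are fine: the squares in (1) and the left square in (2) are naturality squares for $\ab^1$, and the top rows are the standard non-abelian cohomology sequences. But the bulk of your effort goes into re-deriving exactness of the bottom rows, which is already supplied by \cite[Proposition 2.10]{Bo98} (quoted in the paper immediately before this theorem), and your route to it rests on a false premise: for a short exact sequence of \emph{connected} reductive groups the induced sequence $1\to G_1^{sc}\to G_2^{sc}\to G_3^{sc}\to1$ \emph{is} exact (a connected normal subgroup of a reductive group is generated by a Galois-stable set of almost-simple factors of the derived group together with a central torus, so the sequence of simply connected covers is even split), and hence $1\to T_1^{(sc)}\to T_2^{(sc)}\to T_3^{(sc)}\to1$ is exact as well; this is exactly how Borovoi gets the long exact sequence. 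Your proposed repair also has defects of its own: $H_2:=G_2\times_{G_3}H_3$ need not have simply connected derived group (note the paper's proof of Lemma \ref{cor-com} takes a further $z$-extension of $G_2'$ for precisely this reason), and Remark \ref{torus} (3) does not imply that a $z$-extension ``does not perturb'' $\cohomorab{1}{K,-}$: for $1\to F\to H_3\to G_3\to1$ with $F$ quasi-trivial one only gets that $\cohomorab{1}{K,H_3}\to\cohomorab{1}{K,G_3}$ is injective, its cokernel mapping to $\cohomorf{2}{K,F}\neq0$, so hitting a given class requires an $f$-lifting $z$-extension as in Lemma \ref{ext-z}.

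The genuine gap is in what you defer. The only assertion of the theorem not already contained in Theorem \ref{abthm1} and \cite[Proposition 2.10]{Bo98} is the commutativity of the right-hand square of (2), relating the non-abelian coboundary $\cohomorf{1}{K,G_3}\to\cohomorf{2}{K,G_1}$ to the hypercohomology connecting map $\delta_1$. This is not a naturality square for a morphism of groups, so functoriality of $\ab^1$ says nothing about it, and ``can be verified at the cocycle level or inherited directly from the $z$-extension reduction'' is precisely the statement to be proved rather than a proof of it. Establishing it requires either Borovoi's explicit cocycle description of $\ab^1$ via the crossed module $(G^{sc}\to G)$, or a careful chase through an $f$-lifting $z$-extension of $G_3$ together with the compatibility of the two coboundaries for the resulting extensions with toral kernels. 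Since that is the actual content of the cited propositions, the sketch as written does not replace the citation.
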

		
		
\subsection{Base-change of the abelianization maps}\,
	
	Let $L$ be a field extension of $K$ and $G$ a connected reductive group over $K$. Let $Z^\bullet$ be the complex associated to the centers of $G^\bullet$ in Definition \ref{cdef}, then group $Z_L$ (resp. $Z_L^{(sc)}$) is also the center of $G_L$ (resp. $G_L^{sc}$). Hence, one induces a restriction homomorphism \begin{equation*}
	\cohomorh{i}{K,Z^\bullet}=\cohomorab{i}{K,G}\rightarrow\cohomorab{i}{L,G}=\cohomorh{i}{L,Z_L^\bullet}
	\end{equation*}of cohomological $\delta$-functors. It is compatible with the canonical Galois cohomologies in the following sense.
		
	\begin{lemma}\label{com1}
	Let $L$ be a field extension of $K$. Let $G$ be a connected reductive $K$-group, then the diagram
	\begin{equation*}
	\xymatrix{
	\cohomorf{1}{K,G}\ar[r]\ar[d]^{\ab^1}&\cohomorf{1}{L,G}\ar[d]^{\ab^1}\\
	\cohomorab{1}{K,G}\ar[r]&\cohomorab{1}{L,G}}
	\end{equation*} is commutative.
	\end{lemma}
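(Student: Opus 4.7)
The plan is to follow the inductive construction of $\ab^1$ given above: first check compatibility for tori, then extend to the subcategory $\cR^o$ (reductive groups whose semisimple part is simply connected), and finally reduce the general case to $\cR^o$ via a $z$-extension.

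First I would handle the torus case. If $T$ is a $K$-torus, then $T^{(sc)}=1$ and $T^\bullet = (1 \to T)$, so $\cohomorab{1}{K,T} = \cohomorh{1}{K,T^\bullet}$ is canonically identified with $\cohomorf{1}{K,T}$ via the shift, and $\ab^1_T$ is this canonical identification (Remark \ref{torus}(1)). Galois (hyper)cohomology is functorial under base change of the base field, so the square commutes tautologically. Next, for $G \in \cR^o$, the map $\ab^1_G$ was constructed as the composite $\cohomorf{1}{K,G} \to \cohomorf{1}{K,G^{tor}} \xrightarrow{\sim} \cohomorab{1}{K,G^{tor}} \xleftarrow{\sim} \cohomorab{1}{K,G}$. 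The first arrow is functorial in $K$ since it is induced by the quotient morphism $G \to G^{tor}$ defined over $K$; the middle arrow commutes with base change by the torus case; the last arrow is an isomorphism induced by the quasi-isomorphism of Lemma~\ref{qs1} applied to any maximal torus of $G$ containing the center, and such quasi-isomorphisms are preserved under base change. Thus the square commutes for every $G\in\cR^o$.

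Now I would treat a general connected reductive $G$. Given $f\in\cohomorf{1}{K,G}$, use Lemma~\ref{ext-z} to choose an $f$-lifting $z$-extension
\begin{equation*}
1 \to F \to H \xrightarrow{p} G \to 1
\end{equation*}
over $K$, together with a lift $g\in\cohomorf{1}{K,H}$ of $f$. Base change to $L$ gives an exact sequence $1\to F_L\to H_L\xrightarrow{p_L} G_L\to 1$ which is again a $z$-extension: indeed, $H_L^{ss}=(H^{sc})_L$ is simply connected, and if $F = \prod_i \wres{L_i/K}(\bG_m)$ then $F_L = \prod_i \wres{L_i\otimes_K L / L}(\bG_m)$, still a product of Weil restrictions of $\bG_m$ along \'etale $L$-algebras, hence a quasi-trivial $L$-torus (for which $\cohomorf{1}{L,F_L}=0$ by Shapiro and Hilbert~90). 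The image $g_L \in \cohomorf{1}{L,H_L}$ is a lift of $f_L$, so by the definition of $\ab^1$ via $z$-extensions,
\begin{equation*}
\ab^1_G(f) = p_*\bigl(\ab^1_H(g)\bigr), \qquad \ab^1_{G_L}(f_L) = (p_L)_*\bigl(\ab^1_{H_L}(g_L)\bigr).
\end{equation*}

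Since $H\in\cR^o$, the previous step gives that $\ab^1_H(g)$ maps to $\ab^1_{H_L}(g_L)$ under the base-change map $\cohomorab{1}{K,H}\to\cohomorab{1}{L,H_L}$. It then remains to verify that the base-change homomorphism commutes with $p_*$ on abelian cohomology. This is a naturality statement for the functor $\cohomorab{1}{-,-}$ with respect to field extensions: tracing through the construction of $p_*$ via a maximal torus $T_2\subseteq G$ containing $p(Z_H)$, the diagrams $Z_H^\bullet \to T_2^\bullet \leftarrow Z_G^\bullet$ base change to the analogous diagrams over $L$, and Galois hypercohomology is natural in the base field. Combining these compatibilities yields $\ab^1_{G_L}(f_L)$ as the image of $\ab^1_G(f)$, which is exactly the commutativity of the square.

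The only potentially delicate point is the independence of $\ab^1_G(f)$ from the chosen $z$-extension and lift, but this was already established in the sketch preceding the lemma, so there is no new obstruction: the argument just propagates naturality through that construction. The main technical checkpoint is thus confirming that the base change of a $z$-extension remains a $z$-extension, which I addressed above via Weil restriction and the invariance of simply-connectedness under field extension.
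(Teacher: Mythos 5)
Your proposal is correct and follows essentially the same route as the paper: reduce to the case $G\in\cR^o$ via an $f$-lifting $z$-extension from Lemma~\ref{ext-z}, then conclude from the torus case. You simply make explicit a few compatibilities (that the base change of a $z$-extension is again a $z$-extension, and that $p_*$ commutes with restriction) which the paper leaves implicit.
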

	
	\begin{proof}
	Take an element $f\in\cohomorf{1}{K,G}$ and an $f$-lifting $z$-extension $H\xrightarrow pG$ by Lemma \ref{exist2}, one reduces to the case that $G\in\cR^o$. But this is clear since the diagram is commutative for tori.
	\end{proof}
	
	Now, let $L$ be a finite extension of $K$, according to \cite[Sect 2.3]{Del79} or \cite[Char XVIII, Sect 6.3]{SGA43}, one has the following commutative diagram \begin{equation*}
			\xymatrix{
			\mathrm{Res}_{L/K}Z_L\ar[r]^{\quad \mathrm{Tr}_{L/K}}&Z\\
			\mathrm{Res}_{L/K}Z^{(sc)}_L\ar[u]\ar[r]_{\quad \mathrm{Tr}_{L/K}}&Z^{(sc)}\ar[u]}.
			\end{equation*}This is a morphism of complexes of $\Gamma_K$-modules, then it induces a corestriction homomorphism
	\begin{equation*}\cor{L/K}:\cohomorab{1}{L,G}\rightarrow\cohomorab{1}{K,G}
	\end{equation*}of Galois hyper-cohomology with  the help of Shapiro's lemma. It satisfies a similar restriction-corestriction property as the canonical case in the following sense.
	
	\begin{lemma}\label{res-cor}
	Let $L$ be a finite extension of $K$, $G$ a connected reductive $K$-group, then the composition of following homomorphisms \begin{equation*}
	\xymatrix{\cohomorab{1}{K,G}\ar[r]&\cohomorab{1}{L,G}\ar[r]^{\cor{L/K}}&\cohomorab{1}{K,G}}
	\end{equation*}is the endomorphism of $\cohomorab{1}{K,G}$ by multiplicating the integer number $[L:K]$. 
	\end{lemma}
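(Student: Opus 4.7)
The plan is to reduce the statement to the standard restriction–corestriction identity for Galois cohomology, using the fact that $\cohomorab{1}{K,G}$ is by definition hyper-cohomology of a complex of tori. By Lemma \ref{qs1}, $\cohomorab{1}{K,G} \simeq \cohomorh{1}{K, Z^\bullet}$ where $Z^\bullet = (Z^{(sc)} \to Z)$ is a two-term complex of $\Gamma_K$-modules (tori). Both maps in the composition of the lemma are already defined at the level of this hyper-cohomology: the restriction map comes from base change $K \to L$, and the corestriction map is defined (as noted in the paragraph preceding the lemma) via Shapiro's lemma from the morphism of complexes $\mathrm{Tr}_{L/K}: \wres{L/K}(Z_L^\bullet) \to Z^\bullet$.

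The key step is to compute the composition at the level of complexes of $\Gamma_K$-modules. Via Shapiro's lemma, the restriction map $\cohomorh{1}{K, Z^\bullet} \to \cohomorh{1}{L, Z_L^\bullet}$ corresponds to the map $\cohomorh{1}{K, Z^\bullet} \to \cohomorh{1}{K, \wres{L/K}(Z_L^\bullet)}$ induced by the unit of adjunction $\eta: Z^\bullet \to \wres{L/K}(Z_L^\bullet)$. Hence the composition in the lemma is induced on hyper-cohomology by the composition of complexes
\begin{equation*}
Z^\bullet \xrightarrow{\eta} \wres{L/K}(Z_L^\bullet) \xrightarrow{\mathrm{Tr}_{L/K}} Z^\bullet.
\end{equation*}
Applying this degree by degree, it suffices to check that for any $K$-torus $T$, the composition $T \xrightarrow{\eta_T} \wres{L/K}(T_L) \xrightarrow{\mathrm{Tr}_{L/K}} T$ is multiplication by $[L:K]$. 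This is standard: after base change to $\overline{K}$ the torus $\wres{L/K}(T_L)$ becomes $T^{[L:K]}$, the unit $\eta_T$ becomes the diagonal embedding, and the trace becomes the sum map, so the composition is multiplication by $[L:K]$. Applying this degree by degree to the two-term complex $Z^\bullet$ shows the composition of complexes is multiplication by $[L:K]$, and this identity passes to the induced map on $\cohomorh{1}{K, Z^\bullet} = \cohomorab{1}{K,G}$.

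I do not expect a serious obstacle here; the only subtlety is verifying that the corestriction defined through Shapiro's lemma together with $\mathrm{Tr}_{L/K}$ agrees componentwise with the classical corestriction on each torus $Z$ and $Z^{(sc)}$, so that the identity $\mathrm{Tr}_{L/K} \circ \eta = [L:K] \cdot \mathrm{id}$ at the level of $\Gamma_K$-modules transfers to hyper-cohomology. This compatibility is built into the definition via the commutative diagram of trace maps on centers recalled just before the statement. Hence the proof is essentially a reduction: abelian cohomology of $G$ is a derived functor of a complex of tori, both operations are functorial in this complex, and the identity is classical for each individual torus.
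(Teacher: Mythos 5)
Your proposal is correct and follows essentially the same route as the paper: the paper's proof is precisely the observation that the composite $Z \to \wres{L/K}Z_L \xrightarrow{\mathrm{Tr}_{L/K}} Z$ is multiplication by $[L:K]$ for any commutative group $Z$, applied degree by degree to the complex $Z^\bullet$. You simply spell out the Shapiro's-lemma identification of restriction with the unit of adjunction, which the paper leaves implicit.
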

	
	\begin{proof}
	It follows from the fact that the following diagram is commutative \begin{equation*}
	\xymatrix{Z\ar[r]\ar@/_1pc/[rr]_{[L:K]}&\wres{L/K}Z_L\ar[r]^{\quad\mathrm{Tr}_{L/K}}&Z}
	\end{equation*}for any commutative $k$-group $Z$.
	\end{proof}
	
	These corestriction homomorphisms form a morphism of functors in the following sense.
	
	\begin{lemma}
	Let $L$ be a finite extension of $K$, let $p:G_1\rightarrow G_2$ be a morphism of connected reductive $K$-groups, then the following diagram is commutative
	\begin{equation*}
	\xymatrix{\cohomorab{1}{L,G_1}\ar[r]^{p_*}\ar[d]^{\cor{L/K}}&\cohomorab{1}{L,G_2}\ar[d]^{\cor{L/K}}\\
	\cohomorab{1}{K,G_1}\ar[r]_{p_*}&\cohomorab{1}{K,G_2}}.
	\end{equation*}
	\end{lemma}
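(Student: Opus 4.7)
The plan is to reduce the commutativity to the naturality of the corestriction with respect to morphisms of complexes of $\Gamma_K$-modules, by exploiting the explicit construction of $p_*$ via a maximal torus of $G_2$.

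First I would pick a maximal torus $T_2$ of $G_2$ containing $p(Z_1)$, so that, by the discussion preceding Lemma \ref{qs1}, the map $p_*$ factors both over $K$ and over $L$ as a zig-zag
\[
\cohomorab{1}{K,G_1}\longrightarrow\cohomorh{1}{K,T_2^\bullet}\xleftarrow{\;\sim\;}\cohomorab{1}{K,G_2},
\]
and similarly for its base change via $T_{2,L}^\bullet$. The first arrow is induced by the morphism of complexes of $\Gamma_K$-modules $Z_1^\bullet\to T_2^\bullet$ coming from $p$ and the functoriality $p^{sc}:G_1^{sc}\to G_2^{sc}$, while the second is the canonical isomorphism supplied by Lemma \ref{qs1}, induced by the quasi-isomorphism $Z_2^\bullet\hookrightarrow T_2^\bullet$. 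Splicing these zig-zags into the square of the lemma, it would suffice to prove commutativity of the two resulting small squares, one for each leg.

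Then the core of the argument becomes the following general statement: for any morphism $\phi:A^\bullet\to B^\bullet$ of bounded complexes of $\Gamma_K$-modules of finite type, the diagram
\[
\xymatrix{\cohomorh{i}{L,A_L^\bullet}\ar[r]^{\phi_*}\ar[d]_{\cor{L/K}}&\cohomorh{i}{L,B_L^\bullet}\ar[d]^{\cor{L/K}}\\
\cohomorh{i}{K,A^\bullet}\ar[r]_{\phi_*}&\cohomorh{i}{K,B^\bullet}}
\]
commutes. By construction, $\cor{L/K}$ is the map on Galois hyper-cohomology induced, via Shapiro's lemma, by the trace morphism of complexes $\wres{L/K}A_L^\bullet\to A^\bullet$ obtained by applying the trace term-by-term, and the trace is natural in its argument. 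Thus the displayed square reduces to the evident naturality square of the trace along $\phi$, and applying this to each of the two legs above yields the lemma.

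The main (though still routine) point I expect to have to verify carefully is that the trace morphism and Shapiro's isomorphism are natural at the level of complexes, not merely of single $\Gamma_K$-modules; once this bookkeeping is in place the conclusion is formal. In particular the cases we actually need, namely complexes of tori or of groups of multiplicative type concentrated in degrees $-1$ and $0$, present no extra difficulty.
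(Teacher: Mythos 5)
Your argument is correct and is essentially the paper's: both reduce the square to the naturality of the trace morphism $\wres{L/K}(-)\to(-)$ with respect to a morphism of complexes of $\Gamma_K$-modules and then apply hyper-cohomology together with Shapiro's lemma. The only cosmetic difference is that the paper runs the naturality through $T_1^\bullet\to T_2^\bullet$ for compatible maximal tori (invoking the independence of $p_*$ from this choice), whereas you use the defining zig-zag $Z_1^\bullet\to T_2^\bullet\leftarrow Z_2^\bullet$ directly; both come to the same thing.
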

	
	\begin{proof}
	Let $T_1$ be a maximal torus of $G_1$ and $T_2$ be a maximal torus of $G_2$ containing the image of $T_1$ in $G_2$. Then one has a commutative diagram\begin{equation*}
	\xymatrix{\wres{L/K}(T^\bullet_1)\ar[d]\ar[r]^{\quad\quad\mathrm{Tr}_{L/K}}&T^\bullet_1\ar[d]\\
	\wres{L/K}(T^\bullet_2)\ar[r]^{\quad\quad\mathrm{Tr}_{L/K}}&T^\bullet_2}.
	\end{equation*}Since the definition of $p_*$ is independent on the choice of maximal tori $T_1$ or $T_2$, one concludes by applying the hyper-cohomology on the above diagram with the help of Shapiro's lemma.
	\end{proof}

	We now care about the commutativity between corestriction maps and connecting maps of abelianized cohomologies. To do so, we give an abelianized version of Lemma \ref{exist2} at first.
	
	\begin{lemma}\label{exist3} Let $G$ be a connected reductive $K$-group, $L$ a finite extension of $K$, $f\in\cohomorab{1}{L,G}$ be an element, then there exists a $z$-extension $H\rightarrow G$ over $K$ such that $f$ comes from $\cohomorab{1}{L,H}$.  
	
	\end{lemma} 
	
	\begin{proof}
	We only need to notice that $\cohomorab{1}{\overline K,G}=0$ (because the hyper-Galois cohomology over $\overline K$ is trivial). Then there is also a finite extension $L'$ of $L$ such that $L'$ is a Galois extension of $K$, $f$ maps to $0\in\cohomorab{1}{L',G}$ and $L'$ splits $G$. The rest of the proof is the same as the one of Lemma \ref{exist2} except replacing all $\cohomorf{1}{*,*}$ by $\cohomorab{1}{*,*}$.
	\end{proof}
	
	\begin{lemma}\label{cor-com}Let $L$ be a finite extension of $K$, \begin{equation*}
			\xymatrix{1\ar[r]& G_1\ar[r]& G_2\ar[r]& G_3\ar[r]& 1}
			\end{equation*}an exact sequence of connected reductive linear  $K$-groups where $G_1$ is a torus, then the diagram\begin{equation*}
			\xymatrix{\cohomorab{1}{L,G_3}\ar[d]^{\cor{L/K}}\ar[r]^{\delta_1}&\cohomorab{2}{L,G_1}\ar[d]^{\cor{L/K}}\\
			\cohomorab{1}{K,G_3}\ar[r]_{\delta_1}&\cohomorab{2}{K,G_1}}
			\end{equation*}is commutative.
	\end{lemma}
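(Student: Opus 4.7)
The plan is to reduce the compatibility of $\cor{L/K}$ with $\delta_1$ to the naturality of the connecting homomorphism in a long exact hyper-cohomology sequence, applied to a morphism of short exact sequences of complexes of tori. First, I would choose a maximal torus $T_2$ of $G_2$ containing $G_1$ (possible since $G_1$ is a torus, hence a sub-torus of some maximal torus of $G_2$), set $T_1 = G_1$ and let $T_3 \subseteq G_3$ be the image of $T_2$. Then $T_3$ is a maximal torus of $G_3$, and one obtains a short exact sequence $1 \to T_1 \to T_2 \to T_3 \to 1$ of $K$-tori. Using $\psi^{-1}$ and the fact that $T_1^{(sc)} = 1$ (since $G_1^{sc} = 1$), one gets a short exact sequence $0 \to T_1^\bullet \to T_2^\bullet \to T_3^\bullet \to 0$ of two-term complexes of $K$-tori, and by Lemma \ref{qs1} this computes the long exact sequence containing $\delta_1$.

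Second, I apply $\wres{L/K}(-)_L$ to this short exact sequence. Weil restriction along a finite extension is exact on the category of tori, so one obtains a short exact sequence $0 \to \wres{L/K} T_{1,L}^\bullet \to \wres{L/K} T_{2,L}^\bullet \to \wres{L/K} T_{3,L}^\bullet \to 0$ of complexes of $K$-tori. The trace morphisms $\mathrm{Tr}_{L/K}: \wres{L/K} T_{j,L}^\bullet \to T_j^\bullet$ are natural in $T_j$, so they furnish a morphism from this short exact sequence to the original one $0 \to T_1^\bullet \to T_2^\bullet \to T_3^\bullet \to 0$.

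Third, I take Galois hyper-cohomology over $K$ of both short exact sequences and apply Shapiro's lemma to identify $\mathbb{H}^i(K, \wres{L/K} T_{j,L}^\bullet) = \mathbb{H}^i(L, T_{j,L}^\bullet) = \cohomorab{i}{L, G_j}$. By definition of $\cor{L/K}$ (recalled just before the statement), the induced vertical maps on hyper-cohomology are precisely the corestriction homomorphisms. The commutativity of the desired square then follows immediately from the functoriality of the boundary map in the long exact sequence attached to a morphism of short exact sequences of complexes.

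The only potentially non-formal point is checking that the two-term complex formed from the simply connected covers fits into a short exact sequence $0 \to T_1^{(sc)} \to T_2^{(sc)} \to T_3^{(sc)} \to 0$ of $K$-tori; here the torus hypothesis on $G_1$ is essential, because it forces $T_1^{(sc)} = 1$, so this sequence degenerates to the obvious short exact sequence $1 \to T_2^{(sc)} \xrightarrow{\sim} T_3^{(sc)} \to 1$ (using that $G_2^{sc} \to G_3^{sc}$ is an isomorphism when $\kernel{G_2 \to G_3} = G_1$ is a torus). Beyond this verification everything is a formal consequence of the naturality of connecting maps and of Shapiro's lemma, so I do not expect any genuine obstacle.
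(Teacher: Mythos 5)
Your proof is correct, but it follows a genuinely different route from the paper's. You work directly with the definition of $\delta_1$: since $G_1$ is a normal torus in the connected group $G_2$ it is central, so it sits inside a maximal torus $T_2$, and the short exact sequence of complexes $0\rightarrow(1\rightarrow G_1)\rightarrow T_2^\bullet\rightarrow T_3^\bullet\rightarrow 0$ (using $G_2^{sc}\simeq G_3^{sc}$, hence $T_2^{(sc)}\simeq T_3^{(sc)}$) is precisely the sequence from which \cite[Proposition 2.10]{Bo98} produces $\delta_1$; the commutativity then drops out of the naturality of $\mathrm{Tr}_{L/K}$, exactness of Weil restriction on tori, Shapiro's lemma, and functoriality of connecting maps for morphisms of short exact sequences of complexes. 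The paper instead runs a d\'evissage: it lifts the class through a $z$-extension ladder $1\rightarrow H_1\rightarrow H_2\rightarrow H_3\rightarrow 1$ with $H_2,H_3\in\cR^o$ and $H_1$ a torus, reduces to the case where $G_2,G_3$ have simply connected derived groups, and then passes to the quotient sequence of maximal torus quotients $1\rightarrow G_1'\rightarrow G_2^{tor}\rightarrow G_3^{tor}\rightarrow 1$, where the statement is the known one for tori. Your argument is shorter and avoids lifting the class element by element; its one tacit ingredient is that the corestriction defined in the paper via the trace on the center complex $Z^\bullet$ agrees with the one induced by the trace on $T^\bullet$ — this is immediate from Lemma \ref{qs1} (the quasi-isomorphism $Z^\bullet\rightarrow T^\bullet$ is compatible with traces, and Weil restriction preserves quasi-isomorphisms of complexes of tori), and the paper already uses the same identification in the preceding lemma on $p_*$, so it is worth a sentence but is not a gap.
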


	\begin{proof}
	Take an element $f\in\cohomorab{1}{L,G_3}$, then by Lemma \ref{exist3}, there is a $z$-extension $H_3\rightarrow G_3$ such that $f$ comes from $\cohomorab{1}{L,H_3}$. Let $G_2'$ be the cartesian product of the following diagram \begin{equation*}
	\xymatrix{G_2'\ar[r]\ar[d]&H_3\ar[d]\\
	G_2\ar[r]&G_3}
	\end{equation*}and take a $z$-extension $H_2\rightarrow G_2'$, then we have a commutative diagram \begin{equation*}
	\xymatrix{1\ar[r]& H_1\ar[r]\ar[d]& H_2\ar[d]\ar[r]& H_3\ar[r]\ar[d]& 1\\
	1\ar[r]& G_1\ar[r]& G_2\ar[r]& G_3\ar[r]& 1}
	\end{equation*}with exact rows such that $H_2,H_3\in\cR^o$ and $H_1$ is a torus (it is a central extension of two tori). Then the question reduces to the case where $G_2,G_3$ lie in $\cR^o$. Now, consider the following diagram\begin{equation}\label{??1}
		\xymatrix{1\ar[r]& G_1\ar[r]\ar[d]& G_2\ar@{->>}[d]\ar[r]& G_3\ar[r]\ar@{->>}[d]& 1\\
		1\ar[r]& G_1'\ar[r]& G_2^{tor}\ar[r]& G_3^{tor}\ar[r]& 1}
		\end{equation}where rows are exact and $G_1'$ is a torus (it is an extension of two connected groups and is also a group of multiplicative type). By five lemma, $\cohomorab{1}{K,G_1}\rightarrow\cohomorab{1}{K,G_1'}$ and $\cohomorab{1}{L,G_1}\rightarrow\cohomorab{1}{L,G_1'}$ are isomorphisms. Since the lemma holds for the second row of \reff{??1}, it also holds for the first row.
	\end{proof}
	
	\begin{remark}Lemma \ref{cor-com} holds for a general $L$, but the proof requires other unnecessary lemmas. We only require the case that $L$ is a number field or a local field. So it is fine.
	\end{remark}

\subsection{Abelian Galois cohomology of connected linear groups over a number field}\,

	We now consider a connected linear (non-necessarily reductive) group $G$ over a number field or a local field $K$. Let $R_u(G)$ be the unipotent radical of $G$ and set $G^{red}=G/R_u(G)$ (it is reductive).

	\begin{lemma}[\cite{Har02}, the proof of Theorem 2]\label{ab:mod}
	Let $K$ be  a number field or a local field, $G$ be a connected linear $K$-group, then the restriction map\begin{equation*}
	\cohomorf{1}{K,G}\simeq\cohomorf{1}{K,G^{red}}
	\end{equation*}is an isomorphism.
	\end{lemma}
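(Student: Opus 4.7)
The plan is to apply the short exact sequence $1 \to R_u(G) \to G \to G^{red} \to 1$ and reduce the question to the vanishing of Galois cohomology of commutative unipotent groups, combined with a twisting argument to handle non-abelian $\cohomorf{1}{K,\cdot}$ as a pointed set.

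First, I would filter $R_u(G)$ by its lower central series $R_u(G) = U_0 \supset U_1 \supset \cdots \supset U_m = 1$. Each $U_i$, being characteristic in $R_u(G)$, is normal in $G$, and each quotient $U_i/U_{i+1}$ is a commutative connected unipotent $K$-group. In characteristic $0$, such groups are isomorphic as $K$-groups to some $\bG_a^{n_i}$. Setting $G_i = G/U_i$ produces a tower $G = G_m \to G_{m-1} \to \cdots \to G_0 = G^{red}$ whose successive kernels are commutative unipotent.

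Second, for each $i$ I would consider the long exact sequence of non-abelian Galois cohomology attached to $1 \to U_i/U_{i+1} \to G_{i+1} \to G_i \to 1$. Since $\cohomorf{j}{K, \bG_a} = 0$ for $j \geq 1$ by the normal basis theorem, both $\cohomorf{1}{K, U_i/U_{i+1}}$ and the obstruction space $\cohomorf{2}{K, U_i/U_{i+1}}$ vanish, and the same holds after any twist by a cocycle of $G_i$ because the twist of a commutative unipotent $K$-group is again commutative unipotent, hence again a form of $\bG_a^{n_i}$. It follows that the restriction map $\cohomorf{1}{K, G_{i+1}} \to \cohomorf{1}{K, G_i}$ is surjective and has trivial fibre over the basepoint.

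Third, the standard twisting argument upgrades injectivity at the basepoint to full injectivity: given two lifts of the same class in $\cohomorf{1}{K, G_i}$, twist $G_{i+1}$ by one of them to reduce to injectivity at the basepoint of the twisted sequence, where the same vanishing applies. Composing these bijections across the filtration then yields the desired isomorphism $\cohomorf{1}{K, G} \simeq \cohomorf{1}{K, G^{red}}$. The main subtlety is verifying that inner twists of $U_i/U_{i+1}$ retain the structure of a commutative unipotent group, so that the vanishing applies uniformly; this is immediate since being connected commutative unipotent is a geometric property preserved by inner twisting, and in characteristic $0$ this forces the vector group structure. Note that no hypothesis beyond characteristic $0$ is actually needed for this argument; the restriction to number fields and local fields in the statement reflects only the later use.
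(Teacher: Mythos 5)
Your argument is correct and is precisely the standard dévissage that the paper is citing (the lemma is quoted from Harari, who in turn relies on Sansuc's Lemme 1.13): filter $R_u(G)$ by $G$-stable normal subgroups with commutative connected unipotent quotients, use that such quotients and all their twists are vector groups in characteristic $0$ so their $\cohomorf{1}{K,\cdot}$ and $\cohomorf{2}{K,\cdot}$ vanish, and conclude by the twisting argument. Your closing remark is also accurate: the isomorphism holds over any field of characteristic $0$, and the restriction to number fields and local fields in the statement is only for consistency with the later applications.
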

	
	Due to this lemma, we set $\cohomorab{i}{K,G}=\cohomorab{i}{K,G^{red}}$ and let $\ab^1_G$ denote the composition of morphisms\begin{equation*}
	\cohomorf{1}{K,G}\rightarrow\cohomorf{1}{K,G^{red}}\xrightarrow{\ab^1_{G^{red}}}\cohomorab{1}{K,G}.
	\end{equation*}
	
	\begin{lemma}\label{ab:surj}
	Let $K$ be a number field or a local field, $G$ be a connected linear $K$-group, then the abelianization map\begin{equation*}
	\ab^1:\cohomorf{1}{K,G}\twoheadrightarrow\cohomorab{1}{K,G}
	\end{equation*}is surjective. Moreover, if $K$ is a non-archimedian local field, then $\ab^1$ is bijective.
	\end{lemma}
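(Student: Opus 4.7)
The plan is to reduce the problem to a reductive group whose semisimple part is simply connected, and then invoke classical vanishing results from the Galois cohomology of semisimple groups.

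First, by Lemma \ref{ab:mod} and the definition $\cohomorab{i}{K,G}=\cohomorab{i}{K,G^{red}}$, I reduce immediately to the case where $G$ is connected reductive. Given $\xi \in \cohomorab{1}{K,G}$, I then produce a $z$-extension $1 \to F \to H \to G \to 1$ such that $\xi$ lifts to $\cohomorab{1}{K,H}$. This is an abelian-cohomology analogue of Lemma \ref{ext-z}: starting from any $z$-extension, the obstruction to lifting $\xi$ sits in $\cohomorf{2}{K,F}$ via the long exact hyper-cohomology sequence $\cohomorf{1}{K,F} \to \cohomorab{1}{K,H} \to \cohomorab{1}{K,G} \to \cohomorf{2}{K,F}$; by the argument of Lemma \ref{ext-z} we can enlarge $F$ to a product of $\wres{L'/K}(\bG_m)$'s for a sufficiently large Galois $L'/K$ to kill this obstruction. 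Since $\cohomorf{1}{K,F}=0$ by Shapiro's lemma and Hilbert~90, the lift is automatic.

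Having reduced to $H\in\cR^o$, by construction $\ab^1_H$ is the composition $\cohomorf{1}{K,H}\rightarrow\cohomorf{1}{K,H^{tor}}\simeq\cohomorab{1}{K,H}$, so surjectivity of $\ab^1$ reduces to surjectivity of $\cohomorf{1}{K,H}\rightarrow\cohomorf{1}{K,H^{tor}}$. Using the non-abelian cohomology sequence attached to $1\rightarrow H^{sc}\rightarrow H\rightarrow H^{tor}\rightarrow 1$, surjectivity amounts to a vanishing statement for the obstruction attached to the simply connected semisimple group $H^{sc}$, which holds over a local or global field of characteristic zero by Kneser, Bruhat--Tits and Borel--Harder. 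I would cite \cite[Theorem~5.4]{Bo98} for the complete statement.

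For the bijectivity over non-archimedean local $K$, the additional input is Kneser's theorem: $\cohomorf{1}{K,H^{sc}}=0$. By a standard twisting principle, the fibres of $\cohomorf{1}{K,H}\rightarrow\cohomorf{1}{K,H^{tor}}$ are identified with the $\cohomorf{1}$ of inner twists of $H^{sc}$, which are still simply connected semisimple and hence have trivial first cohomology. Thus $\ab^1_H$ is a bijection, and bijectivity of $\ab^1_G$ then follows from the five-lemma applied to the map of long exact hyper-cohomology sequences induced by $1\rightarrow F\rightarrow H\rightarrow G\rightarrow 1$, using that $\cohomorf{i}{K,F}\simeq\cohomorab{i}{K,F}$ canonically by Remark~\ref{torus}(1). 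The main obstacle is the surjectivity of $\cohomorf{1}{K,H}\rightarrow\cohomorf{1}{K,H^{tor}}$ over a number field, which depends on the deep results on Galois cohomology of simply connected semisimple groups; the rest consists of formal reductions via $z$-extensions and diagram chases, including the abelian-cohomology version of $f$-lifting used above.
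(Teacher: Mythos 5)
Your proposal is correct and follows essentially the same route as the paper: reduce to the connected reductive case via Lemma \ref{ab:mod} and then invoke Borovoi's results (\cite[Theorem 5.4, Corollary 5.4.1 and Theorem 5.7]{Bo98}), which is exactly the paper's two-line proof. The additional material in your write-up ($z$-extensions, reduction to groups with simply connected semisimple part, Kneser's vanishing) is a sketch of the internals of those cited theorems rather than a different argument, and since you ultimately defer the deep surjectivity statement to the same citation, the two proofs coincide in substance.
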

	
	\begin{proof}
	By Lemma \ref{ab:mod}, it reduces to the case of reductive connected group, then apply \cite[Theorem 5.4, Corollary 5.4.1 and Theorem 5.7]{Bo98}.
	\end{proof}
	
	Now, let $k$ be a number field and $v\in\Omega_k$ be a place. Let $\loc_v,\loc_v^\ab,\loc$ and $\loc^\ab,$ denote the canonical restriction map \begin{equation*}
	\loc_v:\cohomorf{1}{k,G}\rightarrow\cohomorf1{k_v,G}\quad\quad\loc_v^\ab:\cohomorab{1}{k,G}\rightarrow\cohomorab1{k_v,G}
	\end{equation*}\begin{equation*}
		\loc:\cohomorf{1}{k,G}\rightarrow\prod\limits_{v\in\Omega_k}\cohomorf1{k_v,G}\quad\quad\loc^\ab:\cohomorab{1}{k,G}\rightarrow\prod\limits_{v\in\Omega_k}\cohomorab1{k_v,G}.
		\end{equation*}Let $\ab^1_v$ denote the abelianization map $\cohomorf{1}{k_v,G}\rightarrow\cohomorab{1}{k_v,G}$. These maps fit into commutative diagrams 	
	\begin{equation*}
	\xymatrix{
	\cohomorf{1}{k,G}\ar[r]^{\loc_v}\ar[d]_{\ab^1}&\cohomorf1{k_v,G}\ar[d]^{\ab^1_v}
		\\
			\cohomorab{1}{k,G}\ar[r]_{\loc_v^\ab}&\cohomorab1{k_v,G}}
	\end{equation*}for each $v\in\Omega_k$ by Lemma \ref{com1}.

	\begin{theorem}\label{iff}
	Let $G$ be a conneccted linear group over a number field $k$, then the commutative diagram \begin{equation*}
	\xymatrix{
	\cohomorf{1}{k,G} \ar@{->>}[r]^{\ab^1}\ar[d]_{\loc}&\cohomorab{1}{k,G}\ar[d]^{\loc^\ab}\\
	\prod\limits_{v\in\Omega_k}\cohomorf{1}{k_v,G}\ar@{->>}[r]_{\ab^1}&\prod\limits_{v\in\Omega_k}\cohomorab{1}{k_v,G}
	}
	\end{equation*}is cartesian.
	\end{theorem}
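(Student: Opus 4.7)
The plan is to reduce the statement to the case where $G$ is connected reductive, and then to invoke (with a sketch of the argument) Theorem~5.12 of \cite{Bo98}. For the reduction, Lemma~\ref{ab:mod} provides bijections $\cohomorf{1}{k,G} \simeq \cohomorf{1}{k,G^{red}}$ and $\cohomorf{1}{k_v,G} \simeq \cohomorf{1}{k_v,G^{red}}$ at each place, while by definition $\cohomorab{1}{K,G} = \cohomorab{1}{K,G^{red}}$ for every field $K$ appearing in the square, and the maps $\ab^1$ are compatible with these identifications by construction. Hence the diagram for $G$ is canonically identified with the analogous diagram for $G^{red}$, so the question is reduced to the connected reductive case.

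For surjectivity, I would take a compatible pair $(a,(\eta_v))$ in the fibre product (so $\loc_v^\ab(a) = \ab^1_v(\eta_v)$ for all $v$) and first use Lemma~\ref{ab:surj} to lift $a$ to some $\xi_0 \in \cohomorf{1}{k,G}$. The next step is to twist $G$ by $\xi_0$: this produces an inner form $G^{\xi_0}$ together with canonical bijections of pointed sets $\cohomorf{1}{k,G^{\xi_0}} \simeq \cohomorf{1}{k,G}$ (and similarly at each place) sending the trivial class to $\xi_0$, compatible with localization and, via canonical identifications of abelian cohomologies, with abelianization. After this twist the problem reduces to the case $a = 0$, so $\ab^1_v(\eta_v) = 0$ at every $v$. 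Borovoi's abelianization exact sequence $\cohomorf{1}{K,G^{sc}} \xrightarrow{\iota_*} \cohomorf{1}{K,G} \xrightarrow{\ab^1} \cohomorab{1}{K,G}$ then lifts each $\eta_v$ to some $\zeta_v \in \cohomorf{1}{k_v,G^{sc}}$. Kneser's theorem gives $\cohomorf{1}{k_v,G^{sc}} = 1$ for every finite $v$ (so automatically $\eta_v = 0$ there), and the Kneser--Harder--Chernousov Hasse principle produces a global $\zeta \in \cohomorf{1}{k,G^{sc}}$ with $\loc_v(\zeta) = \zeta_v$ at the archimedean places. The class $\xi := \iota_*(\zeta) \in \cohomorf{1}{k,G}$ then satisfies $\ab^1(\xi) = 0$ and $\loc_v(\xi) = \eta_v$ for all $v$, as required.

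For injectivity, a parallel twist reduces the problem to showing that a class $\xi \in \cohomorf{1}{k,G}$ with $\ab^1(\xi) = 0$ and $\loc(\xi) = 0$ must be trivial. Lifting $\xi$ along $\iota_*$ to some $\zeta \in \cohomorf{1}{k,G^{sc}}$, one must trace the local triviality of $\iota_*(\zeta)$ through the fibres of $\iota_*$, which are governed by the coboundary action of $G^{tor}$-rational points appearing in Borovoi's exact sequence; Hasse principle for $G^{sc}$ together with a careful diagram chase then forces $\zeta$ to lie in the trivial orbit. I expect this last step to be the main obstacle, since it requires a careful handling of the non-abelian coboundary action; the complete argument is given in \cite[Theorem~5.12]{Bo98}.
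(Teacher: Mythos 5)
Your proposal is correct and takes essentially the same route as the paper: reduce to the connected reductive case via Lemma \ref{ab:mod} (using that $\cohomorab{1}{K,G}=\cohomorab{1}{K,G^{red}}$ by definition) and then invoke \cite[Theorem 5.12]{Bo98} together with the surjectivity of $\ab^1$ from Lemma \ref{ab:surj}. The paper stops at the citation, whereas you additionally sketch Borovoi's argument (twisting, the exact sequence through $\cohomorf{1}{K,G^{sc}}$, Kneser's vanishing and the Hasse principle for simply connected groups), which is consistent with how that theorem is proved and adds no gap.
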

	
	\begin{proof}
	We may assume that $G$ is reductive by Lemma \ref{ab:mod}, then it is the corollary of \cite[Theorem 5.12]{Bo98} and Lemma \ref{ab:surj}.
	\end{proof}

%
	Next, we introduce an exact sequence for a connected linear group $G$ over a number field $k$.
	
	Since the characteristic of $k$ is 0, $G$ is isomorphic to $G^{red}\times_k R_u(G)$ as a $k$-variety and $R_u(G)$ is isomorphic to an affine space. Therefore, the canonical restriction $\pic{ G^{red}}\rightarrow\pic{G}$ is bijective.
	
	Any element $E\in\pic G$ can be given the structure of a central extension of algebrac groups \begin{equation}\label{seq11}
	1\rightarrow\bG_m\rightarrow E\rightarrow G\rightarrow 1
	\end{equation}by \cite[Corollary 5.7]{CT08}. It comes from a unique element $E_0\in\pic{G^{red}}$ and it induces a commutative diagram \begin{equation}\label{seq2}
	\xymatrix{1\ar[r]&\bG_m\ar[r]\ar[d]&E\ar[r]\ar[d]&G\ar[r]\ar[d]&1\\
	1\ar[r]&\bG_m\ar[r]&E_0\ar[r]&G^{red}\ar[r]&1}
		\end{equation}where rows are exact and the right square is a cartesian diagram. One obtains a coboundary map $\partial_E^X:\cohomorf{1}{X,G}\rightarrow\cohomorf{2}{X,\bG_m}=\brgp X$ associated to $E$. 
	
	Let $f:Y\rightarrow X$ be a torsor under $G$,  then the homomorphism $\Delta_{Y/X}:\pic{G}\rightarrow\brgp X,E\mapsto\partial_E^X(f)$ induces an exact sequence \begin{equation}\label{fund2}
	\xymatrix{\pic{G}\ar[r]^{\Delta_{Y/X}}&\brgp X\ar[r]^{f^*}&\brgp Y}
	\end{equation}by \cite[Theorem 2.8]{BoD13}. 
	
	For the number field $k$ or any local field $L$ containing $k$, the commutative diagram \reff{seq2} also induces commutative diagrams \begin{equation}\label{comm1}
	\xymatrix{\cohomorf1{k,G}\ar[r]^{\partial_E^k\quad\quad}\ar[d]_{\ab^1}&\cohomorf2{k,\bG_m}=\brgp{k}\ar@{=}[d]\\
	\cohomorab1{k,G}\ar[r]_{\partial_E^{k,\ab}\quad\quad}&\cohomorab2{k,\bG_m}=\brgp{k}}\quad\quad\xymatrix{\cohomorf1{L,G}\ar[r]^{\partial_E^L\quad\quad}\ar[d]_{\ab^1}&\cohomorf2{L,\bG_m}=\brgp{L}\ar@{=}[d]\\
		\cohomorab1{L,G}\ar[r]_{\partial_E^{L,\ab}\quad\quad}&\cohomorab2{L,\bG_m}=\brgp{L}}
	\end{equation}by Theorem \ref{abthm2} and Lemma \ref{ab:mod}. Here, the homomorphism $\partial_E^{k,\ab}$ or $\partial_E^{L,\ab}$ is no other than the connecting homomorphism $\partial_{E_0}^{k,\ab}$ or $\partial_{E_0}^{L,\ab}$ of abelian Galois cohomologies \begin{equation*}
	\xymatrix{\cohomorab{1}{k,G}=\cohomorab{1}{k,G^{red}}\ar[r]^{\quad\quad\quad\partial_{E_0}^{k,\ab}}&\cohomorab{2}{k,\bG_m}\\
	\cohomorab{1}{L,G}=\cohomorab{1}{L,G^{red}}\ar[r]^{\quad\quad\quad\partial_{E_0}^{L,\ab}}&\cohomorab{2}{L,\bG_m}}
	\end{equation*}induced from the second row of the diagram \reff{seq2}. 
	

	\begin{lemma}\label{inv:map} Let $L$ be a local field, then the pairing\begin{equation}\label{pr1}
	\cohomorab{1}{L,G}\times\pic G\rightarrow\bQ/\bZ,\quad(a,E)\mapsto\inv_L(\partial_E^{L,\ab}(a))
	\end{equation}is a bilinear map. 
	\end{lemma}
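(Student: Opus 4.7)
Since the target $\bQ/\bZ$ is abelian, bilinearity splits into additivity in each slot.

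For the first slot, fix $E \in \pic G$. By diagram \reff{comm1}, $\partial_E^{L,\ab}$ coincides with the connecting homomorphism $\delta_1 : \cohomorab{1}{L,G^{red}} \to \cohomorab{2}{L,\bG_m} = \brgp L$ attached to the bottom row of \reff{seq2}, namely the central extension $1 \to \bG_m \to E_0 \to G^{red} \to 1$ (in which $E_0$ is reductive, being a central extension of a reductive group by a torus). Theorem \ref{abthm2}(2), applied with $G_1 = \bG_m$, delivers a long exact sequence of abelian groups in which $\delta_1$ is a group homomorphism. Composing with the group homomorphism $\inv_L : \brgp L \to \bQ/\bZ$ gives additivity in $a$.

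For the second slot, fix $a \in \cohomorab{1}{L,G}$. Via the canonical isomorphism $\pic G \simeq \pic{G^{red}}$ and \cite[Corollary 5.7]{CT08}, the group law on $\pic{G^{red}}$ corresponds to the Baer sum of central extensions of $G^{red}$ by $\bG_m$. It therefore suffices to prove that if $E_{0,1}, E_{0,2} \in \pic{G^{red}}$ have Baer sum $E_0$, then the associated connecting maps on abelian Galois cohomology satisfy
\begin{equation*}
\partial_{E_0}^{L,\ab} \;=\; \partial_{E_{0,1}}^{L,\ab} + \partial_{E_{0,2}}^{L,\ab}.
\end{equation*}
The plan is to reduce this to a statement about complexes of tori. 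For a maximal torus $T \subseteq G^{red}$ containing the centre $Z$, the quasi-isomorphism of Lemma \ref{qs1} identifies each $\partial_{E_{0,i}}^{L,\ab}$ with the connecting homomorphism in the hypercohomology long exact sequence of a short exact sequence of two-term complexes of tori obtained by pulling $E_{0,i}$ back along $T \hookrightarrow G^{red}$. Baer sum of extensions of $G^{red}$ by $\bG_m$ restricts to Baer sum of the pulled-back extensions of $T$ by $\bG_m$, and on that level the additivity of connecting homomorphisms is the standard fact that $\mathrm{Ext}^1(T,\bG_m)$ acts additively on hypercohomology via cup product.

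The main obstacle is bookkeeping: one must check that every step in the construction of $\partial_E^{L,\ab}$ -- the identification $\pic G \simeq \pic{G^{red}}$, the passage from a line bundle on $G^{red}$ to a central extension via \cite[Corollary 5.7]{CT08}, the restriction to the centre $Z$ inside $G^{red}$, and the quasi-isomorphism $Z^\bullet \simeq T^\bullet$ of Lemma \ref{qs1} -- is a group homomorphism on the relevant $\mathrm{Ext}^1$ (equivalently, preserves Baer sum). Each of these compatibilities is routine but requires care; once assembled, the additivity in $E$ follows from the bilinearity of cup product, completing the proof.
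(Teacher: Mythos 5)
Your treatment of the first slot is exactly the paper's: $\partial_E^{L,\ab}$ is a connecting homomorphism of abelian groups in Borovoi's long exact sequence, so composing with $\inv_L$ gives additivity in $a$. For the second slot, however, you take a genuinely different and substantially harder route than the paper, and as written you do not complete it. The paper's argument is a two-line shortcut: since $L$ is local, $\ab^1:\cohomorf{1}{L,G}\twoheadrightarrow\cohomorab{1}{L,G}$ is surjective (Lemma \ref{ab:surj}), so any $a$ lifts to some $a_0\in\cohomorf{1}{L,G}$; by the compatibility \reff{comm1} (diagram \reff{temp1}), the map $E\mapsto\inv_L(\partial_E^{L,\ab}(a))$ coincides with $E\mapsto\inv_L(\partial_E^{L}(a_0))$, and the latter is already known to be a homomorphism in $E$ by \cite[Proposition 2.9]{CT09}. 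This uses the local-field hypothesis essentially and outsources all Baer-sum bookkeeping to the non-abelianized statement in the literature. Your route instead proposes to prove directly that $E\mapsto\partial_E^{L,\ab}$ is additive by identifying $\pic G$ with central extensions, pulling back to a maximal torus through the quasi-isomorphism $Z^\bullet\simeq T^\bullet$, and invoking additivity of connecting maps in $\mathrm{Ext}^1$. That would in fact prove additivity in $E$ over an arbitrary field of characteristic $0$, which is more than the lemma needs; but the steps you yourself flag as ``routine but requiring care'' --- that \cite[Corollary 5.7]{CT08} is a group isomorphism for Baer sum, that pullback to $T$ and the passage to the two-term complex $T^\bullet$ intertwine Baer sum with addition of classes in $\mathrm{Hom}_{\mathrm{D}(L)}(T^\bullet,\bG_{m}[1])$, and that Borovoi's $\delta_1$ is computed by that class --- are precisely the content of the claim, and they are asserted rather than verified. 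As a referee I would either ask you to carry out that bookkeeping or, better, to replace the second half by the lifting argument via Lemma \ref{ab:surj} and \cite[Proposition 2.9]{CT09}, which is where the local-field hypothesis earns its keep.
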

	
	\begin{proof}
	Since $\inv_L\circ\partial_E^{L,\ab}$ is a homomorphism  for any given $E\in\pic G$, the pairing is linear at left. On the other hand, the following diagram is commutative by the diagram \reff{comm1} \begin{equation}\label{temp1}
	\xymatrix{\cohomorf{1}{L,G}\times\pic G\ar[r]\ar[d]&\bQ/\bZ\ar@{=}[d]&\quad(a,E)\mapsto\inv_L(\partial_E^{L}(a))\\
	\cohomorab{1}{L,G}\times\pic G\ar[r]&\bQ/\bZ&\quad(a,E)\mapsto\inv_L(\partial_E^{L,\ab}(a)).}
	\end{equation}By \cite[Proposition 2.9]{CT09}, the first row induces a map $\cohomorf{1}{L,G}\rightarrow\hom{}(\pic G,\bQ/\bZ)=\pic G^D$. For an element $a\in\cohomorab{1}{L,G}$, we may write $a=\ab^1(a_0)$ for some $a_0\in\cohomorf{1}{L,G}$ by Lemma \ref{ab:surj}, then the map $\pic G\rightarrow\bQ/\bZ,\,E\mapsto\inv_L(\partial_E^{L,\ab}(a))$ assciated to $f$ is no other than the map $E\mapsto\inv_L(\partial_E^{L}(a_0))$ by the commutative diagram \reff{temp1}. Hence, the pairing is linear at right.
		\end{proof}
	

	Given a place $v\in\Omega_k$ and a finite field extension $L$ of $k_v$, let $\mu_L$  denote the map \begin{equation*}
		\mu_L:\cohomorf{1}{L,G}\rightarrow\pic G^{D}
		\end{equation*}and $\mu_L^\ab$ denote the homomorphism\begin{equation*}
		\mu_L^\ab:\cohomorab{1}{L,G}\rightarrow\pic G^{D}.
		\end{equation*} Especially, denote $\mu_v=\mu_{k_v}$ and $\mu_v^\ab=\mu_{k_v}^\ab$. Since for each $E\in\pic G$, there is a commutative diagram\begin{equation*}
	\xymatrix{
	\cohomorab{1}{L,G}\ar[r]^{\partial_E^{L,\ab}}\ar[d]_{\cor{L/k_v}}&\brgp L\ar[r]^{\inv_L}\ar[d]_{\cor{L/k_v}}&\bQ/\bZ\ar@{=}[d]\\
	\cohomorab{1}{k_v,G}\ar[r]_{\partial_E^{k_v,\ab}}&\brgp{k_v}\ar[r]_{\inv_{k_v}}&\bQ/\bZ}
	\end{equation*}where the commutativity of the left square is guaranteed by Lemma \ref{cor-com}, one obtains a commutative diagram
		\begin{equation}\label{cor-com2}
		\xymatrix{\cohomorab{1}{L,G}\ar[r]^{\mu_L^\ab}\ar[d]_{\cor{L/k_v}}&\pic{G}^D\ar@{=}[d]\\
		\cohomorab{1}{k_v,G}\ar[r]_{\mu_v^\ab}&\pic{G}^D}.
		\end{equation}
		
	We end this section by the following lemma.
	
	\begin{lemma}\label{fund:ext}Let $G$ be a connected linear group over a number field $k$, then the image of $\loc^\ab$ lies in $\bigoplus\limits_{v\in\Omega_k}\cohomorab{1}{k_v,G}$ and the following sequence is exact
	\begin{equation}\label{fund:exact}
	\cohomorab{1}{k,G}\xrightarrow{\loc^\ab}\bigoplus\limits_{v\in\Omega_k}\cohomorab{1}{k_v,G}\xrightarrow{\sum\mu_v^\ab}\pic G^D.
	\end{equation}
	\end{lemma}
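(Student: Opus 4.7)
The plan is to reduce to the reductive case via Lemma \ref{ab:mod}, verify the ``finite support'' claim by spreading $G$ over a suitable integral model, prove vanishing of the composition by transporting Brauer--Hasse--Noether along the maps $\partial_E^{k,\ab}$, and establish exactness at the middle by lifting an adelic abelian class to non-abelian $\cohomorf{1}{k,G}$ and invoking Theorem \ref{iff} together with the classical Sansuc-type exact sequence.

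First I would replace $G$ by $G^{red}$ using Lemma \ref{ab:mod}, so that all abelianization maps are defined in the reductive sense. To show $\loc^\ab(a)$ has finite support, take $a\in\cohomorab{1}{k,G}$ and, by Lemma \ref{ab:surj}, write $a = \ab^1(a_0)$ with $a_0\in\cohomorf{1}{k,G}$. Spread out: there exist a finite $S\subseteq\Omega_k$ containing the archimedean places and a smooth affine model $\cG$ of $G$ over $\cO_S$ with $a_0\in\image{\cohomorf{1}{\cO_S,\cG}\to\cohomorf{1}{k,G}}$. For every $v\notin S$, Lang's theorem and Hensel's lemma give $\cohomorf{1}{\cO_v,\cG} = 0$, so $\loc_v(a_0) = 0$; by Lemma \ref{com1} this forces $\loc_v^\ab(a) = \ab^1_v(\loc_v(a_0)) = 0$, and hence $\loc^\ab(a)\in\Dsumf{v\in\Omega_k}\cohomorab{1}{k_v,G}$.

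Next, for the vanishing of the composition, fix $a\in\cohomorab{1}{k,G}$ and $E\in\pic G$. The base-change compatibility of $\partial_E^{k,\ab}$ (part of diagram \reff{comm1} applied at $k$ and at each $k_v$) gives $\loc_v\bigl(\partial_E^{k,\ab}(a)\bigr) = \partial_E^{k_v,\ab}\bigl(\loc_v^\ab(a)\bigr)$ in $\brgp{k_v}$. Since $\partial_E^{k,\ab}(a)\in\brgp{k}$, Brauer--Hasse--Noether gives
\begin{equation*}
\Bigl(\sumf{v\in\Omega_k}\mu_v^\ab\Bigr)\bigl(\loc^\ab(a)\bigr)(E) = \sumf{v\in\Omega_k}\inv_v\,\partial_E^{k_v,\ab}\bigl(\loc_v^\ab(a)\bigr) = 0
\end{equation*}
for every $E\in\pic G$, so the composition is zero.

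For exactness at the middle, let $(b_v)\in\Dsumf{v\in\Omega_k}\cohomorab{1}{k_v,G}$ with $\sum_v\mu_v^\ab(b_v) = 0$. Using Lemma \ref{ab:surj}, lift each $b_v$ to some $a_v\in\cohomorf{1}{k_v,G}$; at non-archimedean $v$ the lift is canonical (a bijection), so $a_v = 0$ whenever $b_v = 0$. Thus $(a_v)\in\prodf{v\in\Omega_k}\cohomorf{1}{k_v,G}$ is trivial outside a finite set. The compatibility diagram \reff{temp1} gives $\mu_v(a_v) = \mu_v^\ab(b_v)$, whence $\sum_v\mu_v(a_v) = 0$ in $\pic G^D$. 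The classical non-abelian exact sequence for connected reductive groups (Sansuc \cite{San81}; see also Colliot-Th{\'e}l{\`e}ne \cite{CT08} and Borovoi--Demarche) then yields $a\in\cohomorf{1}{k,G}$ with $\loc(a) = (a_v)$. Setting $\alpha = \ab^1(a)\in\cohomorab{1}{k,G}$, Lemma \ref{com1} gives $\loc^\ab(\alpha) = \ab^1(\loc(a)) = (b_v)$, as required. The hard part will be citing or adapting the correct non-abelian input in this last step; an alternative route avoiding it is to deduce exactness directly from Borovoi's Theorem 5.12 in \cite{Bo98} (already used for Theorem \ref{iff}) combined with the cartesian square, at the cost of explicitly identifying $\pic G^D$ with the Poitou--Tate dual of $\mathbb{H}^1(k,Z^\bullet)$ via the pairings $\mu_v^\ab$ of Lemma \ref{inv:map}.
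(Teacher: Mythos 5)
Your proposal is correct and follows essentially the same route as the paper: finite support via spreading out plus Lang--Hensel, and exactness at the middle by combining the non-abelian exact sequence of Sansuc/Colliot-Th\'el\`ene (the paper cites \cite[Theorem 3.1]{CT09}) with the surjectivity of $\ab^1$ from Lemma \ref{ab:surj} and a diagram chase. Your explicit Brauer--Hasse--Noether verification that the composition vanishes is a harmless elaboration of what the paper reads off from the commutative diagram.
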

	
	\begin{proof}
	
Consider the following commutative diagram \begin{equation*}
	\xymatrix{
	\cohomorf{1}{k,G}\ar[r]^{\loc\quad}\ar@{->>}[d]_{\ab^1}&\prod\limits_{v\in\Omega_k}\cohomorf{1}{k_v,G}\ar[d]\\
	\cohomorab{1}{k,G}\ar[r]_{\loc^\ab\quad}&\prod\limits_{v\in\Omega_k}\cohomorab{1}{k_v,G}}.
	\end{equation*}Since the image of $\loc$ lies in $\bigoplus\limits_{v\in\Omega_k}\cohomorf{1}{k_v,G}$ by Lang's theorem and Hensel's lemma, the image of $\loc^\ab$ lies in $\bigoplus\limits_{v\in\Omega_k}\cohomorab{1}{k_v,G}$ by Lemma \ref{ab:surj}.
	By \cite[Theorem 3.1]{CT09}, the first row in the following commutative diagram is exact\begin{equation*}
	\xymatrix{\cohomorf{1}{k,G}\ar@{->>}[d]_{\ab^1}\ar[r]^{\loc\quad}&\bigoplus\limits_{v\in\Omega_k}\cohomorf{1}{k_v,G}\ar@{->>}[d]_{\oplus\ab^1}\ar[r]^{\quad\sum\mu_v}&\pic G^D\ar@{=}[d]\\
	\cohomorab{1}{k,G}\ar[r]_{\loc^\ab\quad}&\bigoplus\limits_{v\in\Omega_k}\cohomorab{1}{k_v,G}\ar[r]_{\quad\sum\mu_v^\ab}&\pic G^D}.
	\end{equation*}The the second row is exact by Lemma \ref{ab:surj} and diagram chasing.
	\end{proof}
	
\section{descent obstruction for a connected linear group}

	Let $X$ be a smooth and geometrically integral variety over a number field $k$. Let $G$ be a connected linear $k$-group. Firstly, we define the abelianized (local) evaluations
	\begin{equation*}
		(\cdot,\cdot)^v_{\ab}:\cohomorf{1}{X,G}\times\zcycle{}{}(X_v)\rightarrow \cohomorab{1}{k_v,G}
		\end{equation*}as follows.

	Let $f\in\cohomorf{1}{X,G}$ be a torsor under $G$ over $X$. 
	
	If $P_v$ is a closed point on $X_v$, we send $f$ to $\cor{k(P_v)/k_v}(\text{ab}^1(f(P_v)))$. Alternatively speaking, it is the image of $f$ along the  composition of the following maps \begin{equation*}
	\cohomorf{1}{X,G}\xrightarrow{(P_v)}\cohomorf{1}{k(P_v),G}\xrightarrow{\ab^1}\cohomorab{1}{k(P_v),G}\xrightarrow{\cor{k(P_v)/k_v}}\cohomorab{1}{k_v,G}.
	\end{equation*}
	
	If $x_v=\sum\limits_in_i.P_{v,i}$ is a general 0-cycle on $X_v$, then define \begin{equation*}
		(f,x_v)_\ab^v=\sum\limits_{i}n_{i}.(f,P_{v,i})_\ab^v.
		\end{equation*}
	
	With the abelianized evaluation $(\cdot,\cdot)_\ab^v$ at each place $v\in\Omega_k$, we define the global piaring
\begin{equation}\label{ab:pair}
	\ABcouple{\cdot,\cdot}:\cohomorf{1}{X,G}\times\zcyclef{}{\vA}(X)\rightarrow\pic G^D
	\end{equation}\begin{equation}\label{def:sum}
	\ABcouple{f,(x_v)_{v\in\Omega_k}}=\sum\limits_{v\in\Omega_k}\mu_v^\ab(f,x_v)_\ab^v
	\end{equation}for any $f\in\cohomorf{1}{X,G}$ and any adelic 0-cycle $(x_v)_{v\in\Omega_k}\in\zcyclef{}{\vA}(X)$. The following  lemma shows the sum is indeed a finite sum, hence, well-defined.
	
	\begin{lemma}\label{loc:tv3}
	Let $X$ be a smooth and geometrically integral variety over a number field $k$, $G$ a connected linear $k$-group. Let $f\in\cohomorf{1}{X,G}$ be a torsor. Let $(x_v)_{v\in\Omega_k}$ be an adelic 0-cycle on $X$, then there is a finite subset $S\subseteq\Omega_k$ (dependent on $f$ and $(x_v)_{v\in\Omega_k}$) such that for any $v\not\in S$, one obtains that  $(f,x_v)_\ab^v=0$. 
	
	\end{lemma}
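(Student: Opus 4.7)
The plan is to reduce the statement to the pointwise vanishing statement already provided by Lemma~\ref{loc:tv2}. By the definition of the abelianized evaluation, if $x_v = \sum_i n_i \cdot P_{v,i}$ then
\begin{equation*}
(f,x_v)^v_\ab = \sum_i n_i \cdot \cor{k(P_{v,i})/k_v}\bigl(\ab^1(f(P_{v,i}))\bigr),
\end{equation*}
so it suffices to show that, for almost all $v \in \Omega_k$ and every closed point $P_v$ in the support of $x_v$, the class $\ab^1(f(P_v)) \in \cohomorab{1}{k(P_v),G}$ is trivial.

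First, I would apply Lemma~\ref{loc:tv2} to the torsor $f$ and the adelic 0-cycle $(x_v)_{v\in\Omega_k}$: this produces a finite subset $S \subseteq \Omega_k$ such that for every $v \notin S$ and every closed point $P_v$ lying in the support of $x_v$, the evaluation $f(P_v) \in \cohomorf{1}{k(P_v),G}$ is already the trivial class. Note that this step uses all the arithmetic content of the problem, namely the smooth model $\cG$ over $\spec{\cO_S}$ with connected fibres, Lang's theorem, and Hensel's lemma; the remaining work is purely formal.

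Next, by Theorem~\ref{abthm1} the abelianization map $\ab^1: \cohomorf{1}{k(P_v),G} \to \cohomorab{1}{k(P_v),G}$ is a morphism of pointed sets, so it sends the trivial class to $0$. Hence $\ab^1(f(P_{v,i})) = 0$ for each $i$ whenever $v \notin S$. Since $\cor{k(P_{v,i})/k_v}$ is a group homomorphism, each summand in the displayed formula vanishes, and therefore $(f,x_v)^v_\ab = 0$ for every $v \notin S$.

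There is essentially no obstacle here: the only substantive input is Lemma~\ref{loc:tv2}, and the abelianized evaluation is defined precisely so that vanishing of the classical evaluation propagates through $\ab^1$ and $\cor{}$. The argument is thus a one-line reduction followed by invoking the fact that $\ab^1$ preserves the distinguished point.
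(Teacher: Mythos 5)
Your proof is correct and follows exactly the paper's argument: apply Lemma~\ref{loc:tv2} to obtain the finite set $S$ with $f(P_v)$ trivial for $v\notin S$, then use that $\ab^1$ preserves the distinguished point and that $\cor{k(P_v)/k_v}$ is a homomorphism to conclude $(f,x_v)^v_\ab=0$. The paper's own proof is the same reduction, stated more tersely.
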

	
	\begin{proof}Take a finite subset $S\subseteq\Omega_k$ as in Lemma \ref{loc:tv2}. For each $v\not\in S$ and each closed point $P_v$ lying in the support of $x_v$, one obtains that $\ab^1(f(P_{v}))=0\in\cohomorab{1}{k(P_v),G}$. Hence, one obtains that $(f,x_v)_\ab^v=0$ for $v\not\in S$.\end{proof}
	
	Let $f\in\cohomorf{1}{X,G}$ be a torsor. We set \begin{equation*}
	\zcyclef{}{\vA}(X)^f_\ab=\{(x_v)_{v\in\Omega_k}\in\zcyclef{}{\vA}(X):\ABcouple{f,(x_v)_{v\in\Omega_k}}=0\}.
	\end{equation*}By the exact sequence \reff{fund:exact}, this definition is no other than the one defined in Introduction. The following lemma shows the relation between $X(\vA)^f$ and $\zcyclef{}{\vA}(X)^f_\ab$.
	
	\begin{lemma}\label{rat-cyc}
	Let $X$ be a smooth and geometrically integral variety over a number field $k$, $G$ a connected linear $k$-group and $f\in\cohomorf{1}{X,G}$ a torsor. Let $(P_v)_{v\in\Omega_k}\in X(\vA)$ be an adelic point, then $(P_v)_{v\in\Omega_k}\in X(\vA)^f$ if and only if $(P_v)_{v\in\Omega_k}\in\zcyclef{}{\vA}(X)^f_\ab$, regarding $(P_v)_{v\in\Omega_k}$ as an adelic 0-cycle on $X$ of degree 1.
	\end{lemma}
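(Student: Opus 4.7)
The plan is to reduce the statement to the cartesian square of Theorem \ref{iff} via the fundamental exact sequence of Lemma \ref{fund:ext}.

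First I would unpack what the abelianized pairing $\langle f,(P_v)_{v\in\Omega_k}\rangle_{\mathrm{ab}}$ becomes for an adelic point $(P_v)_{v\in\Omega_k}$ viewed as a 0-cycle of degree $1$. Since each $P_v$ is a $k_v$-point, the residue field $k(P_v)$ equals $k_v$ and the corestriction $\cor{k(P_v)/k_v}$ appearing in the definition of $(f,P_v)_{\ab}^v$ is the identity. Therefore $(f,P_v)_{\ab}^v=\ab^1_v(f(P_v))\in\cohomorab{1}{k_v,G}$. In particular, $\langle f,(P_v)\rangle_{\ab}=\sum_{v}\mu_v^{\ab}(\ab^1_v(f(P_v)))$.

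Next, by the exact sequence \reff{fund:exact} of Lemma \ref{fund:ext}, vanishing of this sum in $\pic G^D$ is equivalent to the assertion that the family $(\ab^1_v(f(P_v)))_{v\in\Omega_k}$, which lies in $\bigoplus_{v\in\Omega_k}\cohomorab{1}{k_v,G}$ by Lemma \ref{loc:tv3}, belongs to the image of $\loc^{\ab}$. Thus $(P_v)\in\zcyclef{}{\vA}(X)^f_{\ab}$ iff $(\ab^1_v(f(P_v)))_v\in\image{\loc^{\ab}}$. On the other hand, by definition, $(P_v)\in X(\vA)^f$ iff $(f(P_v))_v\in\image{\loc}$.

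Finally I would invoke Theorem \ref{iff}: the commutative square
\begin{equation*}
\xymatrix{\cohomorf{1}{k,G}\ar[r]^{\ab^1}\ar[d]_{\loc}&\cohomorab{1}{k,G}\ar[d]^{\loc^{\ab}}\\
\prod_{v\in\Omega_k}\cohomorf{1}{k_v,G}\ar[r]_{\ab^1}&\prod_{v\in\Omega_k}\cohomorab{1}{k_v,G}}
\end{equation*}
is cartesian. The forward implication is trivial from commutativity: if $(f(P_v))_v=\loc(\zeta)$ for some $\zeta\in\cohomorf1{k,G}$, then $(\ab^1_v(f(P_v)))_v=\loc^{\ab}(\ab^1(\zeta))$. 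For the converse, suppose $(\ab^1_v(f(P_v)))_v=\loc^{\ab}(\eta)$ for some $\eta\in\cohomorab1{k,G}$. Then $((f(P_v))_v,\eta)$ is a compatible pair in the lower-left and upper-right corners of the square, hence by the cartesian property lifts to some $\zeta\in\cohomorf1{k,G}$ with $\loc(\zeta)=(f(P_v))_v$. This shows the two conditions coincide, completing the proof. The only subtlety is confirming that Theorem \ref{iff} applies even when $G$ is merely connected linear (not reductive), but this is built into the conventions of Section 3 via Lemma \ref{ab:mod}.
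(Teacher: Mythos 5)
Your proposal is correct and follows essentially the same route as the paper: identify $(f,P_v)_{\ab}^v$ with $\ab^1_v(f(P_v))$, use Lemma \ref{loc:tv3} and the exact sequence \reff{fund:exact} to translate the vanishing of the pairing into membership in the image of $\loc^{\ab}$, and conclude via the cartesian square of Theorem \ref{iff}. The paper's proof is just a terser version of the same argument, and your explicit unwinding of the cartesian property is exactly what "the lemma follows from Theorem \ref{iff}" means there.
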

	
	\begin{proof}
	First notice that, by Lemma \ref{loc:tv3}, $((f,P_v)_\ab^v)_{v\in\Omega_k}$ lies in $\bigoplus\limits_{v\in\Omega_k}\cohomorab{1}{k_v,G}$. By the exact sequence \reff{fund:exact}, $\ABcouple{f,(P_v)_{v\in\Omega_k}}=0$ if and only if $((f,P_v)_\ab^v)_{v\in\Omega_k}$ lies in the image of $\cohomorab{1}{k,G}\rightarrow\bigoplus\limits_{v\in\Omega_k}\cohomorab{1}{k_v,G}$. Then the lemma follows from Theorem \ref{iff}.
	\end{proof}
	
	Now, we prove a theorem which is slightly more general than Theorem \ref{main3}.

	\begin{theorem}\label{main2}
		Let $k$ be a number field, $G$ a connected linear $k$-group, and $X$ a smooth and geometrically integral $k$-variety. Let $f:Y\rightarrow X$ be a  torsor under $G$ and let $\br'_f$ be the kernel of restriction map $f^*:\brgp X\rightarrow\brgp Y$, then we have \begin{equation*}
		\zcyclef{}{\vA}(X)^{\br'_f}=\zcyclef{}{\vA}(X)^f_\ab.
		\end{equation*}(We only use the prime of the symbol $\br'_f$ to distinguish the Brauer subgroup $\br_\lambda$ in Section 2.)
		\end{theorem}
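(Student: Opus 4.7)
The plan is to reduce the theorem to a single evaluation identity and then verify it place by place using the commutative diagrams of Section 3. By the fundamental exact sequence \reff{fund2}, $\br'_f=\kernel{f^*}$ equals the image of $\Delta_{Y/X}:\pic G\to\brgp X$. Since $\br_0(X)$ is orthogonal to every adelic $0$-cycle, it suffices to prove that for every $E\in\pic G$ and every $(x_v)\in\zcyclef{}\vA(X)$,
\begin{equation*}
\BMcouple{\Delta_{Y/X}(E),(x_v)_{v\in\Omega_k}}=\ABcouple{f,(x_v)_{v\in\Omega_k}}(E)\quad\text{in }\bQ/\bZ.
\end{equation*}
Granting this, both inclusions follow at once: vanishing of the right-hand side for every $E\in\pic G$ is, by definition of $\pic G^D$, equivalent to $\ABcouple{f,(x_v)}=0$, while vanishing of the left-hand side for every $E$ is precisely the condition $(x_v)\in\zcyclef{}{\vA}(X)^{\br'_f}$ modulo the already-trivial contribution of $\br_0(X)$.

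To verify the identity, I would compute the Brauer pairing at a closed point $P_v$ in the support of $x_v$. By naturality of the connecting map $\partial_E$, the evaluation $\Delta_{Y/X}(E)(P_v)=\partial_E^X(f)(P_v)$ equals $\partial_E^{k(P_v)}\bigl(f(P_v)\bigr)$ in $\brgp{k(P_v)}$. The commutative square \reff{comm1} over $k(P_v)$ rewrites this class as $\partial_E^{k(P_v),\ab}\bigl(\ab^1(f(P_v))\bigr)$, placing the computation inside abelian Galois cohomology. Next, Lemma \ref{cor-com} applied to the central extension $1\to\bG_m\to E_0\to G^{\mathrm{red}}\to 1$ (obtained from \reff{seq2}) lets us move $\cor{k(P_v)/k_v}$ past $\partial_E^{?,\ab}$. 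Taking $\bZ$-linear combinations over the support of $x_v$ then yields the place-wise identity
\begin{equation*}
\bigl(\Delta_{Y/X}(E),x_v\bigr)\;=\;\partial_E^{k_v,\ab}\bigl((f,x_v)_\ab^v\bigr)\quad\text{in }\brgp{k_v}.
\end{equation*}

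The global identity follows by applying $\inv_v$ at each place and summing over $v\in\Omega_k$: by the definition of $\mu_v^\ab$ (which is a homomorphism by Lemma \ref{inv:map}) one has $\inv_v\bigl(\partial_E^{k_v,\ab}(a)\bigr)=\mu_v^\ab(a)(E)$ for every $a\in\cohomorab{1}{k_v,G}$, and Lemma \ref{loc:tv3} ensures the sum is finite. The main obstacle I anticipate is organizational rather than substantive: one must carefully track that $\partial_E$ is built from an extension of $G$ whereas Lemma \ref{cor-com} requires the reductive setting, so the cartesian square \reff{seq2} and the identification $\cohomorab{1}{\cdot,G}=\cohomorab{1}{\cdot,G^{\mathrm{red}}}$ from Section 3 (via Lemma \ref{ab:mod}) must be threaded through every step to keep the abelianized connecting maps and the corestrictions compatible. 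Once this bookkeeping is in place, the sequence of commutative diagrams assembles into the required identity and both containments are immediate.
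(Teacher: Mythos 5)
Your proposal is correct and follows essentially the same route as the paper: both reduce the theorem to the identity $\BMcouple{\Delta_{Y/X}(E),(x_v)_{v\in\Omega_k}}=\ABcouple{f,(x_v)_{v\in\Omega_k}}(E)$, verified place by place via the commutative diagrams of Section 3, and then conclude using the exact sequence \reff{fund2} identifying $\br_f'$ with the image of $\Delta_{Y/X}$. Your unpacking of the corestriction compatibility (Lemma \ref{cor-com}) and the reductive-quotient bookkeeping is just a more explicit rendering of the single commutative diagram the paper asserts.
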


	\begin{proof}
	Let $(x_v)_{v\in\Omega_k}$ be an adelic 0-cycle, then $(x_v)_{v\in\Omega_k}\in\zcyclef{}{\vA}(X)^f$ if and only if $((f,x_v)_\ab^v)_{v\in\Omega_k}$ lies in the image of $\loc:\cohomorab{1}{k,G}\rightarrow\bigoplus\limits_{v\in\Omega_k}\cohomorab{1}{k_v,G}$. For any given $E\in\pic G$, the following diagram is commutative
	
	\begin{equation*}
	\xymatrix{
	\cohomorf{1}{X,G}\ar[d]_{\partial_E}\ar[r]^{(x_v)\quad}&\bigoplus\limits_{v\in\Omega_k}\cohomorab{1}{k_v,G}\ar[d]_{\oplus\partial_E^{k_v,\ab}}\ar[r]^{\quad\quad\sum\mu_v^\ab}&\pic G^D\ar[d]^{(E)}\\
	\brgp X\ar[r]_{(x_v)\quad}&\bigoplus\limits_{v\in\Omega_k}\brgp{k_v}\ar[r]_{\quad\quad\sum\inv_v}&\bQ/\bZ}.
	\end{equation*} Therefore, one obtains that\begin{equation*}
	\BMcouple{\Delta_{Y/X}(E),(x_v)_{v\in\Omega_k}}=\ABcouple{f,(x_v)_{v\in\Omega_k}}(E).
	\end{equation*}The exact sequence \reff{fund2} shows that the image of homomorphism $\Delta_{Y/X}:\pic G\rightarrow\br(X)$ is no other than $\br_f'$. Then $\ABcouple{f,(x_v)_{v\in\Omega_k}}=0$ if and only if $(x_v)_{v\in\Omega_k}$ is orthogonal to $\br_f'$.
	\end{proof}



\begin{remark}
Notice that given a torsor $\sigma\in\cohomorf{1}{k,G}$, the desncet subset $X(\vA)^{f^\sigma}$ is the same as $X(\vA)^{f}$ due to the $f$-descent decomposition  \reff{g-inter}. For 0-cycles, the descent subgroup $\zcyclef{}{\vA}(X)^{f^\sigma}_\ab$ is also independent on the twist by an element $\sigma\in\cohomorf{1}{k,G}$. This observation is based on Theorem \ref{main2} and the next lemma.
\end{remark}

\begin{lemma}\label{brf-dec}
Let $X$ be a smooth and geometrically integral variety over a number field $k$, $G$ a connected linear $k$-group and $f:Y\rightarrow X$ be a left torsor under $G$. Let $\sigma\in\cohomorf{1}{k,G}$ be an element and $f^\sigma:Y^\sigma\rightarrow X$ the twist of $f$ by $\sigma$, then \begin{equation*}
\br_f'+\br_0(X)=\br_{f^\sigma}'+\br_0(X).
\end{equation*}
\end{lemma}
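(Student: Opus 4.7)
The plan is to reduce the lemma to a twisting formula for the coboundary maps $\partial_E^X$ attached to central extensions coming from $\pic G$. By the exact sequence \reff{fund2} applied to $f$ and to $f^\sigma$, we have
\begin{equation*}
\br_f' \;=\; \image{\Delta_{Y/X}}, \qquad \br_{f^\sigma}' \;=\; \image{\Delta_{Y^\sigma/X}}.
\end{equation*}
The twist $f^\sigma$ is a torsor under the inner form $G^\sigma$ of $G$; since $G$ is connected, its inner automorphisms act trivially on $\pic{G_{\overline k}}$ (they are homotopic to the identity through $G$ itself) and preserve the Galois action, so there is a canonical identification $\pic{G^\sigma} \simeq \pic G$ sending $E$ to $E^\sigma$. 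Correspondingly, the central extension $1 \to \bG_m \to \tilde E \to G \to 1$ determined by $E$ twists to a central extension $1 \to \bG_m \to \tilde E^\sigma \to G^\sigma \to 1$ with the same kernel $\bG_m$ (which is fixed under twisting by its own centrality).

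It therefore suffices to prove the twisting formula
\begin{equation*}
\partial_{E^\sigma}^X(f^\sigma) \;-\; \partial_E^X(f) \;=\; \pi^*\bigl(\partial_E^k(\sigma)\bigr)
\end{equation*}
in $\brgp X$, where $\pi\colon X \to \spec k$ is the structure morphism. Since the right-hand side lies in $\br_0(X)$ by definition, this yields $\br_{f^\sigma}' \subseteq \br_f' + \br_0(X)$; applying the same formula with $(f^\sigma,\sigma^{-1})$ in place of $(f,\sigma)$, and using $(f^\sigma)^{\sigma^{-1}} = f$, then gives the reverse inclusion.

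To prove the twisting formula I would work with Čech cocycles. Represent $f$ by a $1$-cocycle $c$ with values in $G$ on some \'etale cover of $X$, and $\sigma$ by a Galois $1$-cocycle $s$ with values in $G(\overline k)$; the twist $f^\sigma$ is then represented by the product cocycle $c \cdot \pi^*s$. Lifting $c$ and $s$ to $1$-cochains $\tilde c$ and $\tilde s$ in $\tilde E$, their coboundaries $d\tilde c$ and $d\tilde s$ are $2$-cocycles in $\bG_m$ representing $\partial_E^X(f)$ and $\partial_E^k(\sigma)$ respectively. Because $\bG_m$ is central in $\tilde E$, the product $\tilde c \cdot \pi^*\tilde s$ lifts $c \cdot \pi^*s$ and a direct computation shows that its coboundary equals $d\tilde c \cdot \pi^*d\tilde s$ in $\bG_m$, which gives the formula.

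The main obstacle will be making the cocycle manipulations precise, since $\cohomorf{1}{X,G}$ is only a pointed set for non-abelian $G$ and the notion of ``product'' of cocycles requires care. The centrality of $\bG_m$ in $\tilde E$ is the essential ingredient: it eliminates the commutator corrections that would otherwise obstruct the additivity of the coboundaries, making the formula (and hence the lemma) work.
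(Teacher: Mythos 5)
Your route is genuinely different from the paper's and it does work, modulo one point in the cocycle computation discussed below. The paper never touches the coboundary maps $\partial_E$ directly: it realizes $Y^\sigma$ as $(P\times_k Y)/G$ for $P$ the torsor of $\sigma$, reduces to comparing the kernels of $'f^{\ast}$ and $'f^{\sigma,\ast}$ on $\br_a$, and identifies them using the decomposition $\br_a(P\times_kY)\simeq\br_a(P)\oplus\br_a(Y)$ together with a lemma of Cao producing a retraction onto $\br_a(Y)$. Your argument instead stays upstairs in $\pic G$ and proves the twisting formula for the connecting maps of the central extensions $1\to\bG_m\to \tilde E\to G\to1$; this is essentially Serre's formula for connecting maps under twisting (Galois Cohomology, I.5.7), and it has the advantage of making visible \emph{why} the discrepancy is exactly a constant class, namely $\pi^*\partial_E^k(\sigma)\in\br_0(X)$ (your sign is the opposite of the usual convention, which is harmless here). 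The identification $\pic{G^\sigma}\simeq\pic G$ via triviality of inner automorphisms on $\pic{G_{\overline k}}$ is correct for connected $G$, and the symmetry argument via untwisting gives the reverse inclusion as you say.

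The one step that does not work as written is the claim that, because $\bG_m$ is central in $\tilde E$, the coboundary of the product lift satisfies $d(\tilde c\cdot\pi^*\tilde s)=d\tilde c\cdot\pi^*d\tilde s$ with \emph{both} coboundaries computed in $\tilde E$. Expanding $d(\tilde c\,\tilde s)_{s,t}=\tilde c_s\tilde s_s\,{}^s\tilde c_t\,{}^s\tilde s_t\,\tilde s_{st}^{-1}\tilde c_{st}^{-1}$ forces you to move $\tilde s_s$ past ${}^s\tilde c_t$, and the resulting commutator lives in $\tilde E$, not in $\bG_m$; centrality of $\bG_m$ alone does not kill it. The correct bookkeeping is to rewrite $\tilde s_s\,{}^s\tilde c_t\,\tilde s_s^{-1}$ as the \emph{twisted} Galois action on $\tilde c_t$, so that the first factor becomes the coboundary of $\tilde c$ computed in the twisted extension ${}_\sigma\tilde E\to G^\sigma$ (and the leftover $\tilde s_s\,{}^s\tilde s_t\,\tilde s_{st}^{-1}$, which \emph{is} central, can be slid out). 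This is precisely what makes $\partial_{E^\sigma}^X(f^\sigma)$, rather than $\partial_E^X$ applied to some $G$-cocycle, appear in your formula — so the statement you are aiming at is right, but the justification must route the $\tilde c$-part through the twisted extension. With that correction (or simply by citing Serre's twisting formula), your proof is complete.
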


\begin{proof}Let $P$ be the left torsor associated to $\sigma$, then $Y^\sigma$ is the quotient of $P\times_k Y$ by diagonal action of $G$ given by $g.(p,y)=(g.p,g.y)$. The canonical projection $\xi_P:P\times_kY\rightarrow Y^\sigma$ induces a diagram\begin{equation}\label{brf-1}
\xymatrix{P\times_kY\ar[r]^{\xi_P}\ar[d]_{\text{pr}_Y}&Y^\sigma\ar[d]^{f^\sigma}\\
Y\ar[r]_f&X}
\end{equation}

Firstly assume that $P$ is trivial. Any rational point $\theta\in P$ determines an isomorphism\begin{equation*}
\zeta_\theta:Y\xrightarrow{y\mapsto(\theta,y)} P\times Y\xrightarrow{\xi_P}Y^\sigma.
\end{equation*}This isomorphism gives rise to a commutative diagram \begin{equation*}
\xymatrix{\br(X)\ar[rd]_{f^\ast}\ar[r]^{f^{\sigma,\ast}}&\br(Y^\sigma)\ar[d]_\sim^{\zeta_\theta^\ast}\\
&\br(Y)}
\end{equation*}

Return to a general $P$. Since $P$ is trivial over $\overline k$ and the diagram \reff{brf-1} is compatible with field extension, this tells us that $(f^*)^{-1}(\br_1(Y))=(f^{\sigma,*})^{-1}(\br_1(Y^\sigma))$. Let $B$ denote this subgroup, then $B$ contains $\br_0(X),\br_f'$ and $\br_{f^\sigma}'$.

Since the canonical homomorphism\begin{equation*}
\br_a(P)\dsum\br_a(Y)\xrightarrow{\sim}\br_a(P\times_kY)
\end{equation*}is isomorphism by \cite[Lemma 5.1]{Bo12}, one has a canonical retraction \begin{equation*}
r:\br_a(P\times_kY)\xleftarrow{\sim}\br_a(P)\dsum\br_a(Y)\rightarrow\br_a(Y)
\end{equation*}and a commutative diagram \begin{equation*}
\xymatrix{
B\ar[r]^{'f^{\ast}}\ar[d]_{'f^{\sigma,\ast}}&\br_a(Y)\ar[d]^{\text{pr}_Y^\ast}\ar[rd]^{\text{id}}\\
\br_a(Y^\sigma)\ar[r]^{\chi_P^\ast}\ar@/_1pc/[rr]_{\psi_P}&\br_a(P\times_kY)\ar[r]^r&\br_a(Y)}.
\end{equation*}Here $'f^\ast$ and $'f^{\sigma,\ast}$ are the homomorphism \begin{equation*}
'f^\ast:B=(f^*)^{-1}(\br_1(Y))\xrightarrow{f^\ast}\br_1(Y)\twoheadrightarrow\br_a(Y)
\end{equation*}\begin{equation*}
'f^{\sigma,\ast}:B=(f^{\sigma,*})^{-1}(\br_1(Y^\sigma))\xrightarrow{f^{\sigma,\ast}}\br_1(Y^\sigma)\twoheadrightarrow\br_a(Y^\sigma)
\end{equation*}induced from $f^\ast$ and $f^{\sigma,\ast}$. The homomoprhism $\psi_P=r\circ\chi_P^\ast$ is an isomorphism by \cite[Lemma 2.7]{Cao19}, then \begin{equation*}
\br_f'+\br_0(X)=\kernel{'f^*}=\kernel{'f^{\sigma,*}}=\br_{f^\sigma}'+\br_0(X).
\end{equation*}
\end{proof}

To prove Theorem \ref{main3}, it is sufficient to prove the following lemma.

\begin{lemma}\label{lemma???}
Let  $X$ be a smooth geometrically integral variety over a number field $k$, $n$ an integer number $\geq2$, $\alpha\in\br(X)[n]$ an element  of torsion $n$, then there is a  torsor $f:Y\rightarrow X$ under $\mathrm{PGL}_n$ such that $\alpha\in\br_f'$.
\end{lemma}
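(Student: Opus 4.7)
The strategy is to realize $\alpha$ as the coboundary of a $\mathrm{PGL}_n$-torsor on $X$ and then observe that any such torsor automatically kills its own coboundary. Specifically, the central extension of algebraic groups
\[
1 \to \bG_m \to \mathrm{GL}_n \to \mathrm{PGL}_n \to 1
\]
produces a connecting map $\delta : \cohomorf{1}{X,\mathrm{PGL}_n} \to \cohomorf{2}{X,\bG_m} = \br(X)$, whose image consists precisely of those Brauer classes representable by a degree-$n$ Azumaya algebra on $X$. The plan is to find $f \in \cohomorf{1}{X,\mathrm{PGL}_n}$ with $\delta(f)=\alpha$.

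Granted such an $f$, the conclusion $\alpha \in \br_f'$ is purely formal. The pullback torsor $Y\times_X Y \to Y$ admits the diagonal of $Y$ as a section and is therefore trivial in $\cohomorf{1}{Y,\mathrm{PGL}_n}$, so the naturality of $\delta$ gives
\[
f^{*}\alpha \;=\; f^{*}\delta(f) \;=\; \delta(Y\times_X Y) \;=\; 0.
\]
Equivalently, under the identification $\pic{\mathrm{PGL}_n}\simeq \bZ/n\bZ$, the map $\Delta_{Y/X}$ in the exact sequence \reff{fund2} sends a generator to $\delta(f)=\alpha$, so $\alpha \in \image{\Delta_{Y/X}} = \br_f'$ by exactness of \reff{fund2}.

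The main obstacle is thus producing $f$ with $\delta(f)=\alpha$. Via the Kummer sequence together with $1\to\mu_n\to\mathrm{SL}_n\to\mathrm{PGL}_n\to 1$, this is a period-equals-index assertion for the class $\alpha$: lift $\alpha$ first to some $\tilde\alpha\in\cohomorf{2}{X,\mu_n}$ (possible since the fiber of $\cohomorf{2}{X,\mu_n}\twoheadrightarrow\br(X)[n]$ is $\pic{X}/n$), then lift $\tilde\alpha$ along $\cohomorf{1}{X,\mathrm{PGL}_n}\to\cohomorf{2}{X,\mu_n}$. I would invoke Gabber's theorem that every cohomological Brauer class on a smooth quasi-projective variety is realized by some Azumaya algebra. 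When an Azumaya representative of degree exactly $n$ is available the lemma follows directly; otherwise $\mathrm{PGL}_n$ should be replaced by $\mathrm{PGL}_m$ with $m$ the index of $\alpha$ and the preceding functoriality argument carries over verbatim. Either way the output is a torsor under a connected linear $k$-group, which is what is required for the application in Theorem \ref{main3}.
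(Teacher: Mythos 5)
Your proof follows the same route as the paper's: realize $\alpha$, via Gabber's theorem, as the coboundary $\partial_E^X(f)$ of a $\mathrm{PGL}$-torsor $f$ arising from the central extension $1\to\bG_m\to\mathrm{GL}_n\to\mathrm{PGL}_n\to1$, and then conclude $\alpha=\Delta_{Y/X}(E)\in\br_f'$ by exactness of \reff{fund2} (your alternative direct argument, that $Y\times_XY\to Y$ is split by the diagonal so $f^*\partial_E^X(f)=0$, is the same fact unwound). Your caveat about period versus index --- that one may only get a $\mathrm{PGL}_m$-torsor with $m$ the index of $\alpha$ rather than a $\mathrm{PGL}_n$-torsor --- is a point the paper's one-line appeal to Gabber glosses over, and, as you observe, it is harmless for the only use of the lemma in Theorem \ref{main3}.
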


\begin{proof}
The canonical exact sequence \begin{equation*}
\xymatrix{1\ar[r]&\bG_m\ar[r]&\mathrm{GL}_n\ar[r]&\mathrm{PGL}_n\ar[r]&1}
\end{equation*}determines a class $E\in\pic{\mathrm{PGL}_n}$ and a connecting map
\begin{equation*}
\xymatrix{\cohomorf{1}{X,\mathrm{PGL}_n}\ar[r]^{\quad\partial_E^X}&\br(X).}
\end{equation*}The element $\alpha$ comes from some $f\in\cohomorf{1}{X,\mathrm{PGL}_n}$ by Gabber's theorem. Let $f:Y\rightarrow X$ be the torsor associated to $f$. Now $\alpha=\partial_E^X(f)=\Delta_{Y/X}(E)$ lies in $\br_f'$ by the exact sequence \reff{fund2}.
\end{proof}

\begin{theorem}\label{main3}
Let $X$ be a smooth and geometrically integral variety over a number field $k$, then\begin{equation*}
\zcyclef{}{\vA}(X)^\br=\zcyclef{}{\vA}(X)^{\mathrm{conn}}_\ab.
\end{equation*}
\end{theorem}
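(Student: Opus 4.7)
The plan is to derive Theorem~\ref{main3} as a formal corollary of Theorem~\ref{main2} combined with Lemma~\ref{lemma???}; no further machinery should be required.

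For the inclusion $\zcyclef{}{\vA}(X)^\br\subseteq\zcyclef{}{\vA}(X)^{\mathrm{conn}}_\ab$, I would fix an arbitrary torsor $f:Y\to X$ under a connected linear $k$-group $G$. By Theorem~\ref{main2}, $\zcyclef{}{\vA}(X)^f_\ab=\zcyclef{}{\vA}(X)^{\br'_f}$, and since $\br'_f\subseteq\br(X)$ one has $\zcyclef{}{\vA}(X)^\br\subseteq\zcyclef{}{\vA}(X)^{\br'_f}=\zcyclef{}{\vA}(X)^f_\ab$. Intersecting over all connected linear torsors yields the desired containment.

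For the reverse inclusion, I would take $(x_v)_{v\in\Omega_k}\in\zcyclef{}{\vA}(X)^{\mathrm{conn}}_\ab$ and an arbitrary class $\alpha\in\br(X)$, and prove that $\BMcouple{\alpha,(x_v)_{v\in\Omega_k}}=0$. The case $\alpha=0$ is trivial. Otherwise, since $X$ is smooth (hence regular), $\br(X)$ is torsion, so $\alpha\in\br(X)[n]$ for some integer $n\geq 2$. Lemma~\ref{lemma???} then produces a torsor $f:Y\to X$ under the connected linear group $\mathrm{PGL}_n$ such that $\alpha\in\br'_f$. Applying Theorem~\ref{main2} once more,
\begin{equation*}
(x_v)_{v\in\Omega_k}\in\zcyclef{}{\vA}(X)^{\mathrm{conn}}_\ab\subseteq\zcyclef{}{\vA}(X)^f_\ab=\zcyclef{}{\vA}(X)^{\br'_f},
\end{equation*}
which forces orthogonality to $\alpha$. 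Since $\alpha$ was arbitrary, $(x_v)_{v\in\Omega_k}\in\zcyclef{}{\vA}(X)^\br$.

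The only genuine input beyond Theorem~\ref{main2} is the realisation of every $n$-torsion Brauer class as $\Delta_{Y/X}(E)$ for some $\mathrm{PGL}_n$-torsor, i.e.\ Gabber's theorem underlying Lemma~\ref{lemma???}; without it, the connected linear groups furnished by the $\ab$-machinery might only account for a proper subgroup of $\br(X)$. In this sense the present argument is just the assembly step, and the main conceptual obstacle lies upstream in establishing Theorem~\ref{main2}, where the abelianized descent pairing $\ABcouple{f,(x_v)}$ must be matched with the Brauer pairing through the diagram involving the coboundary maps $\partial_E^{k_v,\mathrm{ab}}$ and the fundamental exact sequence \reff{fund2}.
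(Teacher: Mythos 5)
Your argument is correct and is essentially the same as the paper's: the forward inclusion follows from Theorem~\ref{main2} applied to each torsor, and the reverse inclusion combines Lemma~\ref{lemma???} (realising every $n$-torsion class in some $\br'_f$ for a $\mathrm{PGL}_n$-torsor $f$) with Theorem~\ref{main2} again. Your added remark that $\br(X)$ is torsion because $X$ is smooth, so every nonzero class lies in some $\br(X)[n]$ with $n\geq 2$, is a detail the paper leaves implicit but is correctly supplied.
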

\begin{proof}
Theorem \ref{main2} shows that $\zcyclef{}{\vA}(X)^\br\subseteq\zcyclef{}{\vA}(X)^{\mathrm{conn}}_\ab$. To show the converse, take an adelic 0-cycle $(x_v)_{v\in\Omega_k}\in\zcyclef{}{\vA}(X)^{\mathrm{conn}}$ and an element $\alpha\in\br(X)[n]$ for some positive integer $n\geq2$. Then by Lemma \ref{lemma???}, there is a connected linear group $G$ and an element $f\in\cohomorf1{X,G}$ such that $\alpha\in\br_f'$. The hypothesis $(x_v)_{v\in\Omega_k}\in\zcyclef{}{\vA}(X)_\ab^{\mathrm{conn}}\subseteq\zcyclef{}{\vA}(X)^f_\ab$ shows that $\BMcouple{\alpha,(x_v)_{v\in\Omega_k}}=0$ by Theorem \ref{main2}.
\end{proof}

\section{Topology of adelic 0-cycles}
\subsection{Topology of adelic 0-cycles defined by the weak approximation}\,

Denote $\chow_{0,\vA}(X)=\coprod\limits_{\delta\in\bZ}\Big(\prod\limits_{v\in\Omega_k}\chow_0^\delta(X_v)\Big)$ and $\chow_0^\delta(X_v)$ is the Chow group of degree $\delta$ associated to $X_v$. We first give a topology on $\zcyclef{}{\vA}(X)$ defined by the weak approximation for 0-cycles.

\begin{definition}\label{WA-def}
Let $X$ be a proper variety over a number field $k$. For a subset $S\subseteq\Omega_k$ and a positive integer $n$, denote $A_{n,S}$ the subgroup \{$(x_v)_{v\in\Omega_k}\in\zcyclef{0}\vA(X):$ the image of $x_v$ in $\chow_0(X_v)/n$ vanishes for each $v\in S$\}. 

Given two such pairs $(n,S)$, $(n',S')$, define a partial order $(n,S)\triangleleft(n',S')$ if $n|n'$ and $S\subseteq S'$. Then $A_{n',S'}$ is a subgroup of $A_{n,S}$ if $(n,S)\triangleleft(n',S')$.

Equip $\zcyclef{}\vA(X)$ with the weakest uniform topology \cite[Chapter III, Section 3.1]{BBK07} such that $\zcyclef{}{\vA}(X)$ is a topological group where the collection \{$A_{n,S}:n>0\text{ and }S\subseteq\Omega_k\text{ is finite}$\} forms the neighborhood base of the trivial adelic 0-cycle $0\in\zcyclef{}{\vA}(X)$.
\end{definition}
%

It is easy to check that for any subset $A$ of $\zcyclef{}{\vA}(X)$, the closure of $A$ is determined by the equation\begin{equation*}
\overline A=\bigcap\limits_{n,S}(A+A_{n,S}).
\end{equation*}Here the intersection is taken over all positive integers  $n$ and all finite subsets $S$ of $\Omega_k$ and the symbol $A+A_{n,S}$ is short for \{$(x_v)_{v\in\Omega_k}+(x_v')_{v\in\Omega_k}:(x_v)_{v\in\Omega_k}\in A$ and $(x_v')_{v\in\Omega_k}\in A_{n,S}$\}.

This definition of topology on $\zcyclef{}{\vA}(X)$ just translates the definition of weak approximation for 0-cycles given in \cite{Liang13}. To be precise, given an integer $\delta$ and a subset $A$ of $\zcyclef{\delta}{\vA}(X)$, an adelic 0-cycle $(x_v)_{v\in\Omega_k}\in\zcyclef{\delta}{\vA}(X)$ lies in the closure $\overline A$ if and only if for each positive integer $n$ and each finite subest $S\subseteq\Omega_k$, there is a 0-cycle $(x_v')_{v\in\Omega_k}$ such that $x_v$ and $x_v'$ has the same image in $\chow(X_v)/n$ for each $v\in S$.

In other words, given an integer $\delta$, the variety $X$ satisfies the weak  approximation for 0-cycles of degree $\delta$ (resp. the weak  approximation with the \BM\ obstruction for 0-cycles of degree $\delta$) if and only if \begin{equation*}
\zcyclef{\delta}{\vA}(X)=\overline{\zcycle{\delta}(X)}\text{ (resp. }\zcyclef{\delta}{\vA}(X)^\br=\overline{\zcycle{\delta}(X)}\text{).}
\end{equation*}

The following proposition shows that the \BM\ obstruction or the $f$-descent obstructions considered so far are all closed.

\begin{proposition}\label{closed-ness}Let $X$ be a smooth, proper and geometrically integral variety over a number field $k$.

(1) Let $B$ be a subset of $\br(X)$, then $\zcyclef{}{\vA}(X)^B$ is a closed subgroup of $\zcyclef{}{\vA}(X)$. Furthermore, if there is a positve integer $n$ such that $n.B=$\{$n.\alpha:\alpha\in B$\}$\subseteq\br_0(X)$, then  $\zcyclef{}{\vA}(X)^B$ is open as well.

(2) Let $G$ be a $k$-group of multiplicative type and $f:Y\rightarrow X$ a torsor under $G$, then $\zcyclef{}{\vA}(X)^f$ is a closed subgroup of $\zcyclef{}{\vA}(X)$.

(3) Let $G$ be a connected linear $k$-group and $f:Y\rightarrow X$ a torsor under $G$, then $\zcyclef{}{\vA}(X)^f_\ab$ is a closed subgroup of $\zcyclef{}{\vA}(X)$. Furthermore, if there is a positive integer $n$ such that $n.\br_f'\subseteq\br_0(X)$, then $\zcyclef{}{\vA}(X)^f_\ab$ is open as well.
\end{proposition}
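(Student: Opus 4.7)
My plan is to reduce parts (2) and (3) to part (1), and then reduce (1) itself to a statement about a single Brauer class. Specifically, for (2) and (3) I invoke Theorems \ref{main1} and \ref{main2}, which identify the descent subgroups as Brauer obstruction subgroups: $\zcyclef{}{\vA}(X)^f = \zcyclef{}{\vA}(X)^{\br_{\chi(f)}}$ when $G$ is of multiplicative type, and $\zcyclef{}{\vA}(X)^f_\ab = \zcyclef{}{\vA}(X)^{\br'_f}$ when $G$ is connected linear. Thus (2) follows from the closed part of (1) applied to $B = \br_{\chi(f)}$, and both claims of (3) follow from (1) applied to $B = \br'_f$; in particular, the openness in (3) matches exactly the ``furthermore'' of (1), since its hypothesis $n\cdot\br'_f \subseteq \br_0(X)$ is precisely the hypothesis of (1)'s second assertion.

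The heart of the argument is therefore (1). Since $\zcyclef{}{\vA}(X)^B = \bigcap_{\alpha\in B}\zcyclef{}{\vA}(X)^{\{\alpha\}}$, closedness reduces to showing each single-class subgroup $\zcyclef{}{\vA}(X)^{\{\alpha\}}$ is open (hence also closed), for $\alpha$ of some order $n$ modulo $\br_0(X)$. I would pick a finite $S_\alpha\subseteq\Omega_k$ containing all archimedean places such that $X$ admits a smooth proper model $\cX_{S_\alpha}$ over $\cO_{S_\alpha}$ and $\alpha$ extends to an element of $\br(\cX_{S_\alpha})$ (possible by purity, which gives $\br(X) = \varinjlim_S \br(\cX_S)$). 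Then $A_{n, S_\alpha}\subseteq\zcyclef{}{\vA}(X)^{\{\alpha\}}$: for $(x_v)\in A_{n, S_\alpha}$, at each $v\in S_\alpha$ one has $x_v\equiv 0\pmod{n}$ in $\chow_0(X_v)$, which together with $n\alpha\in\br_0(X)$ forces $\langle\alpha, x_v\rangle_v = 0$; at each $v\notin S_\alpha$, properness of $X$ and the valuative criterion extend every closed point in the support of $x_v$ uniquely to a point of $\cX_{S_\alpha}(\cO_{k(P_v)})$, on which $\alpha$ takes values in $\br(\cO_{k(P_v)}) = 0$, so $\langle\alpha, x_v\rangle_v = 0$ again. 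Summing yields $\BMcouple{\alpha, (x_v)_{v\in\Omega_k}} = 0$, as required.

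The ``furthermore'' of (1) then needs the same kind of pair $(n, S)$ to work simultaneously for every $\alpha\in B$. With $m = n$ supplied by the hypothesis, I would take $S$ large enough that representatives of every class in the image of $B$ in $\br(X)/\br_0(X)$ extend to $\br(\cX_S)$, after which the same local vanishing argument runs verbatim. The main obstacle is precisely the existence of such a uniform $S$: this is immediate when $\br(X)/\br_0(X)$ (hence its $n$-torsion) is finite, as in the applications indicated in Remark \ref{finiteness}, but in general one must argue that the $n$-torsion hypothesis alone suffices to collect $B$ within a single model-level extension — this is the delicate technical point of the proposition and the one place where care is needed beyond routine diagram chasing.
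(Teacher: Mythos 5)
Your reduction of (2) and (3) to (1) via Theorems \ref{main1} and \ref{main2}, and your proof of closedness by writing $\zcyclef{}{\vA}(X)^B$ as $\bigcap_{\alpha\in B}\zcyclef{}{\vA}(X)^{\{\alpha\}}$ with each single-class subgroup open (hence closed), is exactly the paper's argument; so is the local computation showing $A_{n,S}\subseteq\zcyclef{}{\vA}(X)^{\{\alpha\}}$ (vanishing at $v\notin S$ by spreading $\alpha$ out to $\br(\cX_S)$ and evaluating on integral points, vanishing at $v\in S$ from $x_v\equiv 0$ in $\chow_0(X_v)/n$ together with $n\alpha\in\br_0(X)$). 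One small point worth making explicit in the $v\in S$ step: writing $x_v\sim n\,x_v'$, one gets $\inv_v(\alpha,x_v)=\inv_v(n\alpha,x_v')=\deg(x_v')\cdot\inv_v(\beta_0)$ for $\beta_0\in\br(k)$ lifting $n\alpha$, and this vanishes \emph{termwise} only because $A_{n,S}$ lies in $\zcyclef{0}{\vA}(X)$, so that $\deg(x_v')=0$; without the degree-zero normalization one would only control the sum over all places.

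The issue you flag at the end --- a single finite $S$ working for every $\alpha\in B$ simultaneously --- is a genuine gap, and it is precisely the step the paper itself elides, asserting only that ``such $S$ exists by a limit argument.'' The limit $\br(X)=\varinjlim_S\br(\cX_S)$ produces an $S_\alpha$ for each $\alpha$, but for $B$ infinite modulo $\br_0(X)$ these need not be bounded. For instance, for a curve of positive genus the group $\br(X)[n]/\br_0(X)$ contains the infinite group $\cohomorf{1}{k,\mathrm{Jac}(X)}[n]$ modulo its finite Tate--Shafarevich part, whose elements are nontrivial locally at arbitrarily large places and there pair nontrivially with degree-zero cycles by Lichtenbaum duality; for such $B$ the subgroup $\zcyclef{}{\vA}(X)^B$ is not open. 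So the ``furthermore'' in (1) should be read as requiring in addition that $B$ be contained in $\br(\cX_S)+\br_0(X)$ for a single finite $S$ --- automatic when $B$ is finite modulo $\br_0(X)$, which is the only situation in which the paper actually invokes openness (Remark \ref{finiteness} and the hypotheses of Theorems \ref{main-geo} and \ref{main-geo2}). Your proposal is therefore as complete as the paper's own proof, and your identification of the missing uniformity is the one point at which both arguments need the extra finiteness input.
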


\begin{proof}By Theorem \ref{main1} and Theorem \ref{main3}, (2) and (3) follows from (1). Hence, we only need to claim (1).

Claim (1). We first claim the second assersion of (1).

Fix a finite subset $S$ of $\Omega_k$ such that $\BMcouple{\alpha,(x_v)_{v\in\Omega_k}}=\sum\limits_{v\in S}\inv_v(\alpha,x_v)$ for any adelic 0-cycle $(x_v)_{v\in\Omega_k}$. Such $S$ exists by a limit argument. We claim that $A_{n,S}\subseteq\zcyclef{}{\vA}(X)^B$. 

Take an adelic 0-cycle $(x_v)_{v\in\Omega_k}\in A_{n,S}$, then $x_v$ is rationally equivalent to some 0-cycle with from $n.x_v'$ on $X_v$ for each $v\in S$ by definition. Hence, we have \begin{equation*}
\BMcouple{\alpha,(x_v)_{v\in\Omega_k}}=\sum\limits_{v\in S}\inv_v(\alpha,x_v)=\sum\limits_{v\in S}\inv_v(n.\alpha,x_v')
\end{equation*}for each $\alpha\in B$. Since $n.\alpha\in\br_0(X)$, this sum is 0. Therefore, $A_{n,S}$ is orthogonal to $B$.

To show the first assersion that $\zcyclef{}{\vA}(X)^B$ is always closed, we notice that \begin{equation*}
\zcyclef{}{\vA}(X)^B=\bigcap\limits_{\alpha\in B}\zcyclef{}{\vA}(X)^{\{\alpha\}}.
\end{equation*}Since $\alpha$ is vanished by some positive integer and an open subgroup is always closed in the topology of $\zcyclef{}{\vA}(X)$, the first assersion follows from the second assersion. 
\end{proof}

To reveal the $f$-descent decomposition  for 0-cycles, we care the extension of the ground field $k$.

Let $K$ be a finite extension of $k$. Let $X_K(\vA)$ and $\zcyclef{}{\vA}(X_K)$ denote the associated sets of adelic points and adelic 0-cycles over $K$-variety $X_K$. Given a place $w\in\Omega_K$ lying above $v\in\Omega_k$, we have the pushforward map $\pi^{w|v}_{K/k}:\zcycle{}(X_w)\rightarrow\zcycle{}(X_v)$ for 0-cycles associated to projection $X_w\rightarrow X_v$. Here $X_w$ means $X_{K_w}$. Let $\pi_{K/k}:\zcyclef{}{\vA}(X_K)\rightarrow\zcyclef{}{\vA}(X)$ be the pushforward homomorphism of adelic 0-cycles determined by all these \{$\pi_{w|v}$\}. Precisely, that means $\pi_{K/k}((x_w)_{w\in\Omega_K})=\Big(\sum\limits_{w|v}\pi_{K/k}^{w|v}(x_w)\Big)_{v\in\Omega_k}$ for any adelic 0-cycle $(x_w)_{w\in\Omega_K}\in\zcyclef{}{\vA}(X_K)$.

\subsection{Decomposition of the abelianized descent obstruction for 0-cycles}\,

We recall the definition of $\zcyclef{}{\vA}(X)^f_\BB$.

\begin{definition}\label{def-desc-1}
Let $X$ be a smooth and geometrically integral variety over a number field $k$, $G$ a linear $k$-group and $f:Y\rightarrow X$ a torsor under $G$. Let $\delta$ be an integer.

(1) Say that an adelic 0-cycle $(x_v)_{v\in\Omega_k}\in\zcyclef{\delta}{\vA}(X)$ is \textbf{$f$-descent by an adelic 0-cycle} if there is a finite extension $K$ of $k$ and an element $\sigma\in\cohomorf{1}{K,G}$, such that $(x_v)_{v\in\Omega_k}=\pi_{K/k}(f^\sigma_{K,*}((y_w)_{w\in\Omega_K}))$ for an adelic 0-cycle $(y_w)_{w\in\Omega_K}$ on $Y_K^\sigma$. Alternatively speaking, it lies in the image of the following composition of homomorphisms\begin{equation*}
\xymatrix{\zcyclef{}{\vA}(Y_K^\sigma)\ar[r]^{f^\sigma_{K,*}}&\zcyclef{}{\vA}(X_K)\ar[r]^{\pi_{K/k}}&\zcyclef{}{\vA}(X)}.
\end{equation*}


(2) The $f$-descent subgroup $\zcyclef{}{\vA}(X)^f_{\BB}$ defined in \cite{BB24} is no other than the subgroup of $\zcyclef{}{\vA}(X)$, generated by all adelic 0-cycles on $X$, $f$-descent by an adelic 0-cycle.


\end{definition}

	The following proposition shows $\zcyclef{}{\vA}(X)^f_\BB$ is stricter than $\zcyclef{}{\vA}(X)^f_\ab$.

	\begin{proposition}\label{prop1} Let $X$ be a smooth, proper and geometrically integral variety over a number field $k$, $G$ a connected linear $k$-group and $f:Y\rightarrow X$ a torsor under $G$, then we obtain that \begin{equation*}
		\zcyclef{}\vA(X)_{\mathrm{BB}}^f\subseteq\zcyclef{}\vA(X)_\ab^f.
		\end{equation*}
	\end{proposition}
	
	\begin{proof}We only need to claim that any adelic 0-cycle $(x_v)_{v\in\Omega_k}$, $f$-descent by an adelic 0-cycle, lies in $\zcyclef{}\vA(X)_\ab^f$. Assume that $(x_v)_{v\in\Omega_k}$ has form $\pi_{K/k}(f_{K,*}^\sigma((y_w)_{w\in\Omega_K}))$ for a finite extension $K/k$ and an element $\sigma\in\cohomorf{1}{K,G}$. Take $\alpha\in\br_f'\subseteq\br(X)$, then one has \begin{equation*}
	\BMcouple{\alpha,(x_v)_{v\in\Omega_k}}{_{,k}}=\BMcouple{\beta,(y_w)_{w\in\Omega_K}}{_{,K}}
	\end{equation*}where $\beta$ is the image of $\alpha$ along $\br(X)\rightarrow\br(X_K)\xrightarrow{f_K^{\sigma,\ast}}\br(Y_K^\sigma)$ by functoriality of \BM\ pairing. Since the restriction $\alpha_K\in\br(X_K)$ lies in $\br_{f_K}'$, the image $\beta$ lies in $\br_0(Y_K^\sigma)$ by Lemma \ref{brf-dec}, hence, $\BMcouple{\beta,(y_w)_{w\in\Omega_K}}{_{,K}}=0$. Now, $(x_v)_{v\in\Omega_k}$ is orthogonal to $\br_f'$ and lies in $\zcyclef{}{\vA}(X)^f_\ab$ by Theorem \ref{main2}.
	\end{proof}

	Before we give the converse of Proposition \ref{prop1}, we give the following lemma at first.
	
	\begin{lemma}\label{lem_mod}
	Let $X$ be a smooth, proper and geometrically integral variety over a number field $k$. Assume that following assumptions are satisfied:
	
	(a) $\pic{X_{\overline k}}$ is of finite type;
	
	(b) for each positive integer $d$, there exists a finite extension $k'$ of $k$ such that for any finite extension $K$ of $k$ of degree $d$ that is linearly disjoint from $k'$ over $k$, the canonical morphism $\br(X)/\br_0(X)\twoheadrightarrow\br(X_K)/\br_0(X_K)$ is surjective.
	
	Then for any connected linear $k$-group $G$ and a torsor $f:Y\rightarrow X$ under $G$ and any positive integer $d$, there exists a finite extension $k'$ (dependent on $f$) of $k$ such that for any finite extension $K$ of $k$ of degree $d$ that is linearly disjoint from $k'$ over $k$, the canonical morphism \begin{equation*}
	(\br_f'+\br_0(X))/\br_0(X)\twoheadrightarrow(\br_{f_K}'+\br_0(X_K))/\br_0(X_K)
	\end{equation*}is surjective.
	\end{lemma}
	
	\begin{proof}
	Since the Brauer group $\br(X)$ is invariant up to a birational equivalence, hence, by Chow's lemma, we may assume that $X$ is projective. By Hironaka's resolution of singularity, we may find a projective morphism $f':Z\rightarrow X$ such that $Y$ is an open dense subset of $Z$ and $f$ is the restriction of $f'$ on $Y$. Hence, $Z$ is projective and geometrically integral over $k$. 
	
	We first claim that $\pic{Z_{\overline k}}$ is of finite type. By Sansuc's exact sequence (see \cite[Proposition 6.10]{Sansuc81})
	\begin{equation*}
	\pic{X_{\overline k}}\rightarrow\pic{Y_{\overline k}}\rightarrow\pic{G_{\overline k}},
	\end{equation*}we know that $\pic{G_{\overline k}}$ is of finite type, hence, $\pic{Y_{\overline k}}$ is of finite type. Therefore, $\pic{Z_{\overline k}}$ is of finite type by the exact sequence
	\begin{equation*}
	\bigoplus\limits_{y\in(Z_{\overline k}\backslash Y_{\overline{k}})^{(1)}}\mathbb{Z}\rightarrow\pic{Z_{\overline k}}\rightarrow\pic{Y_{\overline k}}\rightarrow 0.
	\end{equation*}
	
	Now, let $d$ be an arbitrary positive integer and $k'$ be a finite extension of $k$ satisfying the assumption (b).
	
	Let $k''$ be a finite extension of $k'$ large enough such that $\pic{Z_{k''}}=\pic{Z_{\overline k}}$. By construction, $\Gamma_{k''}$ acts trivially on $\pic{Z_{\overline k}}$, hence, one obtains that $\cohomorf{1}{k'',\pic{Z_{\overline k}}}=0$ and $\cohomorf{1}{\Gamma_{k''/k},\pic{Z_{\overline k}}}=\cohomorf{1}{k,\pic{Z_{\overline k}}}$ by the inflation-restriction exact sequence (see for example, \cite[Proposition 4, Section 6, Chapter VII]{Ser62}). We claim that the field $k''$ is the required field.
	 
	By spectral sequence \begin{equation*}
	\cohomorf{p}{k,\cohomorf{q}{Z_{\overline k},\bG_m}}\Rightarrow\cohomorf{p+q}{Z,\bG_m},
	\end{equation*}we have an exact sequence \begin{equation*}
	0\rightarrow\cohomorf{1}{k,\pic{Z_{\overline k}}}\rightarrow\br(Z)/\br_0(Z)\rightarrow\br(Z_{\overline k}).
	\end{equation*}This sequence is functorial at $k$. 
	
	Let $K$ be a finite extension of $k$ of degree $d$ that is linearly disjoint from $k''$ over $k$ and $K''$ be the composite of $K$ and $k''$, then $\Gamma_{K''/K}\xrightarrow\sim\Gamma_{k''/k}$ and $\pic{Z_{k''}}=\pic{Z_{K''}}=\pic{Z_{\overline k}}$, we also see that $\cohomorf{1}{\Gamma_{K''/K},\pic{Z_{\overline K}}}=\cohomorf{1}{K,\pic{Z_{\overline K}}}$. Therefore, we obtain a commutative diagram by functoriallity with exact rows\begin{equation*}
	\xymatrix{0\ar[r]&\cohomorf{1}{\Gamma_{k''/k},\pic{Z_{k''}}}\ar@{=}[d]\ar[r]&\br(Z)/\br_0(Z)\ar[r]\ar[d]&\br(Z_{\overline k})\ar@{=}[d]\\
	0\ar[r]&\cohomorf{1}{\Gamma_{K''/K},\pic{Z_{K''}}}\ar[r]&\br(Z_K)/\br_0(Z_K)\ar[r]&\br(Z_{\overline K})}.
	\end{equation*}This shows that $\br(Z)/\br_0(Z)\rightarrow\br(Z_K)/\br_0(Z_K)$ is injective.
	
	Since $\br(Z)\rightarrow \br(Y)$ is injective and $K$ is also linearly disjoint from $k'$ over $k$, we have a commutative diagram\begin{equation*}
	\xymatrix{0\ar[r]&(\br_f'+\br_0(X))/\br_0(X)\ar[r]\ar[d]&\br(X)/\br_0(X)\ar[r]\ar@{->>}[d]&\br(Z)/\br_0(Z)\ar@{>->}[d]\\
	0\ar[r]&(\br_{f_K}'+\br_0(X_K))/\br_0(X_K)\ar[r]&\br(X_K)/\br_0(X_K)\ar[r]&\br(Z_K)/\br_0(Z_K)},
	\end{equation*}the lemma is claimed by a diagram chasing.
	\end{proof}

	\begin{theorem}\label{main-geo-general} Let $X$ be a smooth, projective and geometrically integral variety over a number field $k$, $G$ a connected linear $k$-group and $f:Y\rightarrow X$ a torsor under $G$. Assume that following assumptions are satisfied:
			
		(a) the abelianized $f$-descent subgroup $\zcyclef{}{\vA}(X)^f_\ab$ is open in $\zcyclef{}{\vA}(X)$;
		
		(b) $\pic{X_{\overline k}}$ is of finite type;
		
		(c) for each positive integer $d$, there exists a finite extension $k'$ of $k$ such that for any finite extension $K$ of $k$ of degree $d$ that is linearly disjoint from $k'$ over $k$, the canonical morphism $\br(X)/\br_0(X)\twoheadrightarrow\br(X_K)/\br_0(X_K)$ is surjective.
		
		Then we obtain that 	
					\begin{equation*}
					\zcyclef{}\vA(X)_\ab^f=\overline{\zcyclef{}\vA(X)_{\mathrm{BB}}^f}.
					\end{equation*}
		\end{theorem}
		
	\begin{proof}Since $\zcyclef{}{\vA}(X)_\ab^f$ is always closed in $\zcyclef{}{\vA}(X)$ by Proposition \ref{closed-ness}, we obtain that $\overline{\zcyclef{}\vA(X)_{\mathrm{BB}}^f}\subseteq\zcyclef{}\vA(X)_\ab^f$ by Proposition \ref{prop1}. Hence, we only need to claim that \begin{equation*}
	\zcyclef{}\vA(X)_\ab^f\subseteq\bigcap\limits_{n,S}(\zcyclef{}\vA(X)_{\mathrm{BB}}^f+A_{n,S}).
	\end{equation*} 
	
	Let $(x_v)_{v\in\Omega_k}$ be an adelic 0-cycle in $\zcyclef{}{\vA}(X)_\ab^f$. By assumption (a), there is a positive integer $n_0$ and a finite subset $S_0$ of $\Omega_k$ such that $A_{n_0,S_0}\subseteq\zcyclef{}{\vA}(X)^f_\ab$, therefore, we only need to claim that for any positive multiple $n$ of $n_0$ and any finite subset $S$ of $\Omega_k$ containing $S_0$, $(x_v)_{v\in\Omega_k}$ has the following decomposition
	\begin{equation*}
	(x_v)_{v\in\Omega_k}=x+\pi_{K/k}((Q_w)_{w\in\Omega_K})+(z_v)_{v\in\Omega_k}
	\end{equation*}where $x$ is a global 0-cycle on $X$, $(z_v)_{v\in\Omega_k}\in A_{n,S}$, $(Q_w)_{w\in\Omega_K}$ is an adelic point on $X_K$ and $K$ is a finite extension of $k$ such that the canonical morphism \begin{equation*}
	(\br_f'+\br_0(X))/\br_0(X)\twoheadrightarrow(\br_{f_K}'+\br_0(X_K))/\br_0(X_K)
	\end{equation*}is surjective. 
	
	We first mention that such an adelic point $(Q_w)_{w\in\Omega_K}$ is orthogonal to $\br'_{f_K}$, hence, $(Q_w)_{w\in\Omega_K}$ lies in the image of $f_K^\sigma:Y_K^\sigma(\vA)\rightarrow X_K(\vA)$ for some $\sigma\in\cohomorf{1}{K,G}$ and $\pi_{K/k}((Q_w)_{w\in\Omega_K})$ is $f$-descent by an adelic point on $Y_{K_i}^\sigma$. In fact, given arbitrary $\alpha_K\in\br_{f_K}'$, one obtains that $\alpha_K=\alpha|_K+\beta$ where $\alpha\in\br_f'$, $\alpha|_K$ is the base-change of $\alpha$ in $\br(X_K)$ and $\beta\in\br_0(X_K)$. We see that \begin{equation*}
	\BMcouple{\alpha_K,(Q_w)_{w\in\Omega_K}}{_{,K}}=\BMcouple{\alpha,\pi_{K/k}((Q_w)_{w\in\Omega_K})}{_{,k}}.
	\end{equation*}Since $\pi_{K/k}((Q_w)_{w\in\Omega_K})=(x_v)_{v\in\Omega_k}-x-(z_v)_{v\in\Omega_k}$ is orthogonal to $\br_f'$, one obtains that $(Q_w)_{w\in\Omega_K}$ is orthogonal to $\br'_{f_K}$.
	
Now, fix a closed point $P$ on $X$. We may assume that $S$ contains all archimedean places and that $X(k_v)\not=\emptyset$ for each $v\not\in S$ by Lang's estimate. Consider the trivial fibration $h:Y=X\times_k\bP^1_k\rightarrow\bP_k^1$. Fix a rational point $0\in\bP^1_k(k)$ and an isomorphism $\bA^1_k=\spec{k[t]}\simeq\bP^1_k\backslash$\{$0$\}. For each $v\in\Omega_k$, we have the 0-cycle $y_v=x_v\times 0$ and the global 0-cycle $P_0=P\times 0$ on $Y_v$. 


By \cite[Lemma 3.2]{CT00}, there is a positive integer $r$ such that for each $v\in S$, $y_v+r.P_0$ is rationally equivalent to an effective 0-cycle $y_v^1$ and its pushforward $f_*(y_v^1)$ is ``separable'' and supported in $\bA^1_v\subsetneq\bP^1_v$. A ``separable'' 0-cycle means a sum of distinct closed points (it is effective). Each $f_*(y_v^1)$ defines a separable monic polynomial $R_v(t)\in k_v[t]$ of degree $\delta_r=\delta+r.\deg(P)>0$. 

Let $k'$ be a finite extension of $k$ such that for each finite extension $K$ of $k$ of degree $\delta_r$ that is linearly disjoint from $k'$ over $k$, the restriction homomorphism \begin{equation*}
	(\br_f'+\br_0(X))/\br_0(X)\twoheadrightarrow(\br_{f_K}'+\br_0(X_K))/\br_0(X_K)
	\end{equation*}is surjective. The projection map $\bP^1_{k'}\rightarrow\bP^1$ defines a Hilbert subset $\textbf{\textit{Hil}}$ on $\bP^1$. By \cite[Proposition 4.7]{Liang23} (take $y_\infty$ to be $\delta_r.\infty$ and $z_v$ to be $f_*(y_v^1)$ there), there is a closed point $\theta\in\textbf{\textit{Hil}}$ such that for each $v\in S$, $\theta$ is rationally equivalent to $f_*(y_v^1)$ and $\theta$ can be chosen arbitrarily close to $f_*(y_v^1)$ if we identify effective 0-cycles of degree $\delta_r$ on $\bP^1$ as rational points on the symmetric product $\text{Sym}^{\delta_r}_{\bP^1/k}(k_v)$.


For each $v\in S$, since the induced morphism $h^{\delta_r}:\text{Sym}_{Y/k}^{\delta_r}(k_v)\rightarrow\text{Sym}_{\bP^1/k}^{\delta_r}(k_v)$ is smooth at $y_v^1$ (treated as a rational point on $\text{Sym}_{Y_v/k_v}^{\delta_r}$), by implicit function theorem, there is an effective 0-cycle $y_v^2$ on $Y_v$ such that $y_v^2$ is arbitrarily closed to $y_v^1$ as rational point of $\text{Sym}_{Y/k}^{\delta_r}(k_v)$ and $f_*(y_v^2)=\theta\times k_v$ on $\bP^1_v$. By \cite[Lemma 1.8]{Wit12}, adelic 0-cycles $y_v^2$ and $y_v^1$ has the same image in $\chow_0(X_v)/n$.


Now, let $K=k(\theta)$, then for each $v\in S$, the equation $f_*(y_v^2)=\theta\times k_v=\coprod\limits_{w|v}\spec{K_w}$ shows that  $y_v^2=\sum\limits_{w|v}Q_w$ where $Q_w$ is a rational point on $X_w$. For each $w\not\in S_K$, there is a rational point  $Q_w\in X_w(K_w)$ by the choice of $S$. We are going to claim that $(Q_w)_{w\in\Omega_K}$ is the required adelic point.

Remember that $y_v+r.P_0$ and $y_v^1$ are rationally equivalent on $Y_v$, then $x_v+r.P$ and $\sum\limits_{w|v}\pi_{w|v}(Q_w)$ has the same image in $\chow_0(X_v)/n$. Therefore, the adelic 0-cycle $(x_v)_{v\in\Omega_k}+r.P-\pi_{K/k}((Q_w)_{w\in\Omega_k})$ lies in $A_{n,S}$. Then the lemma follows.
	\end{proof}

\begin{corollary}\label{main-geo} Let $X$ be a smooth, projective and geometrically integral variety over a number field $k$, $G$ a connected linear $k$-group and $f:Y\rightarrow X$ a torsor under $G$. In each of the following cases:

(i) $X$ is rationally connected,

(ii) $X$ is a K3 surface,

we obtain that 	
					\begin{equation*}
					\zcyclef{}\vA(X)_\ab^f=\overline{\zcyclef{}\vA(X)_{\mathrm{BB}}^f}.
					\end{equation*}
		\end{corollary}
		
\begin{proof}
We only need to check three assumptions in Proposition \ref{main-geo-general} for each case.

In both cases, $\br(X)/\br_0(X)$ is finite, hence, assumption (a) is guaranteed by Proposition \ref{closed-ness}. For finiteness of $\br(X)/\br_0(X)$, see \cite[Proposition 3.1.1]{Liang13} or \cite[Theorem 1.2]{SZ08} respectively. 

In both cases, $\pic{X_{\overline k}}$ is torsion-free of finite type. See \cite[Proposition 1.4.2]{Liang13} or \cite[Proposition 2.4, Chapter 1]{Huy16} respectively. 

The assumption (c) is also guaranteed for both cases, see \cite[Proposition 3.1.1]{Liang13} or \cite[Theorem 1.2]{Ier21} respectively.
\end{proof}

\begin{remark}
If we denote $\zcyclef{}{\vA}(X)^{\mathrm{conn}}_\BB=\bigcap\limits_{f:Y\xrightarrow GX}\zcyclef{}{\vA}(X)^f_\BB$, where the intersection is taken over all $X$-torsors $f:Y\xrightarrow{G}X$ under all connected linear groups $G$ over $k$. One can immediately show that $\zcyclef{}{\vA}(X)^{\mathrm{conn}}_\BB\subseteq\zcyclef{}{\vA}(X)^\br$ with the help of Proposition \ref{prop1}. But the author does not know whether $\overline{\zcyclef{}{\vA}(X)^{\mathrm{conn}}_\BB}=\zcyclef{}{\vA}(X)^\br$ holds under suitable assumptions.
\end{remark}

\subsection{Universal torsors under tori.}\,


Kollar in \cite[Theorem 5]{Kol03} proved that for any smooth, projective and rationally connected variety $X$ over a number field $k$, there is a finite subset $S\subset\Omega_k$ such that $\chow^0_0(X_v)=0$ for any $v\not\in S$. In \cite[Proposition 11]{CT05}, \CT\ pointed out that $\chow_0^0(X_v)$ is annihilated by a uniform positive integer $N$ (independent on the choice of $v\in\Omega_k$). On the other hand, if $v$ is a real place, then $\chow_0^0(X_v)=(\mathbb{Z}/2)^{\text{max}(s-1,0)}$ where $s\geq0$ is the number of connected components of the toplogical space $X(k_v)$. With these evidences, \CT\ gave the following conjecture in \cite[Section 5]{CT05}.

\begin{conjecture}\label{conj}Assume that $X$ is a smooth, projective and rationally connected variety over a $p$-adic field $k$, then $\chow_0^0(X)$ is a finite group.
\end{conjecture}

If Conjecture \ref{conj} holds, then for any smooth projective and rationally connected variety $X$ over a number field $k$, group $\chow_{0,\vA}(X)$ is an abelian group of finite type due to the following exact sequence\begin{equation*}
0\rightarrow\chow_{0,\vA}^0(X)\rightarrow\chow_{0,\vA}(X)\xrightarrow{\text{deg}}\bZ.
\end{equation*}Hence, it is natural to study the behaviour of 0-cycles on a variety $X$ with a finite abelian group $\chow^0_{0,\vA}(X)$.

\begin{lemma}\label{chow00}
Let $X$ be a smooth, projective and geometrically integral variety over a number field $k$, assume that $\chow^0_{0,\vA}(X)$ is a finite abelian group, then 

(1) there exists a positive integer $N$ and a finite subset $S$ of $\Omega_k$ such that $A_{N,S}$ is a minimal open subgroup. This means that for any positive multiple $N'$ of $N$ and a finite subset $S'\supseteq S$, one obtains  that $A_{N',S'}=A_{N,S}$;

(2) subgroup $\zcyclef{}{\vA}(X)^\br$ is an open subgroup of $\zcyclef{}{\vA}(X)$.
\end{lemma}

\begin{remark}
Statement (1) of Lemma \ref{chow00} tells us that, to verify whether $X$ satisfies weak approximation with the \BM\ obstruction for 0-cycles of degree $\delta$, one only need to check that $\zcyclef{\delta}{\vA}(X)^\br=\zcycle{\delta}(X)+A_{N,S}$. Here $N,S$ are chosen as in the statement (1). 
\end{remark}

\begin{proof}
Claim (1). Since $\chow^0_{0,\vA}(X)=\prod\limits_{v\in\Omega_k}\chow_{0}^0(X_v)$ is finite, there is a finite subset $S$ of $\Omega_k$ such that for each $v\not\in S$, $\chow_0^0(X_v)=0$ and a positive integer $N$ such that $N.\chow^0_{0,\vA}(X)=0$. Hence, an adelic 0-cycle $(x_v)_{v\in\Omega_k}$ lies in $A_{N,S}$ if and only if $x_v$ vanishes in $\chow_0^0(X_v)$ for each $v\in\Omega_k$. This means that we have an exact sequence\begin{equation*}
0\rightarrow A_{N,S}\rightarrow \zcyclef{}{\vA}(X)\rightarrow\chow_{0,\vA}(X)\rightarrow0.
\end{equation*}

Now, for any  positive multiple $N'$ of $N$ and any finite subset $S'\supseteq S$ of $\Omega_k$, it is clear that $A_{N,S}\subseteq A_{N',S'}$ and immediately, they are equal. 


Claim (2). Choose $S,N$ as above. Since an adelic point $(x_v)_{v\in\Omega_k}\in A_{N,S}$ is rationally equivalent to $0$, it is orthogonal to $\brgp{X}$. Hence, $A_{N,S}\subseteq\zcyclef{}{\vA}(X)^\br$. A group containing an open subgroup is also open, then $\zcyclef{}{\vA}(X)^\br$ is open.
\end{proof}

\begin{theorem}
Let $X$ be a smooth, projective and geometrically integral variety over a number field $k$ such that $\pic{X_{\overline k}}$ is torsion free and $\zcyclef{1}{\vA}(X)^{\br_1}\neq\emptyset$. Assume that $\zcyclef{}{\vA}(X)^\br$ is open subgroup of $\zcyclef{}{\vA}(X)$ and $\chow_{0,\vA}(X)$ is abelian group of finite type, then there are finitely many field extensions \{$K_1,...,K_n$\} of $k$ and for each $i$ a universal torsor $f_i:Y_i\rightarrow X_{K_i}$ over $X_{K_i}$ such that, if  $Y_i$ satisfies the weak approximation for rational points for each $i$, then $X$ satisfies the weak approximation with the \BM\ obstruction for 0-cycles. 
\end{theorem}

\begin{proof}By Theorem \ref{cor1}, a universal torsor $f:Y\rightarrow X$ exists.
Since $\chow^0_{0,\vA}(X)$ is finite, $\chow_{0,\vA}(X)$ is a abelian group of finite type, so does the image $A$ of $\zcyclef{}{\vA}(X)^{\br}$ in $\chow_{0,\vA}(X)$. Let \{$(x_{1,v})_{v\in\Omega_k},...,(x_{n,v})_{v\in\Omega_k}$\} be a finite subset of $\zcyclef{}{\vA}(X)^\br$ such that their images in $\chow_{0,\vA}(X)$ generate $A$. We choose a finite subset $S$ and a positive integer $N$ as in Lemma \ref{chow00}.


Since $\zcyclef{}{\vA}(X)^f_\BB$ is dense in $\zcyclef{}{\vA}(X)^f_\ab=\zcyclef{}{\vA}(X)^{\br_1}$ by Theorem \ref{main-geo}. For each $i$, $(x_{i,v})_{v\in\Omega_k}$ can be written as the form:
\begin{equation*}
(x_{i,v})_{v\in\Omega_k}=x_i+\pi_{K_i/k}(f_{K_i,*}^{\sigma_i}((Q_{i,w_i})_{w_i\in\Omega_{K_i}}))+(z_{i,v})_{v\in\Omega_k}
\end{equation*}where $x_i$ is a global 0-cycle on $X$, $K_i$ is a finite extension of $k$, $f_{K_i}^{\sigma_i}$ is the twist of $f_{K_i}:Y_{K_i}\rightarrow X_{K_i}$ by some $\sigma_i\in\cohomorf{1}{K_i,G}$, $(Q_{i,w_i})_{w_i\in\Omega_{K_i}}$ is an adelic point on $Y^{\sigma_i}_{K_i}$ and $(z_{i,v})_{v\in\Omega_k}$ lies in $A_{N,S}$. 


Now we claim that if for each $i$, universal torsors $Y^{\sigma_i}_{K_i}$ satisfy the weak approximation for rational points, then $X$ satisfies the weak approximation for 0-cycles. In fact, let $(x_v)_{v\in\Omega_k}\in\zcyclef{}{\vA}(X)^\br\subseteq\zcyclef{}{\vA}(X)^{\br_1}$ be an arbitrary adelic 0-cycle. By the choice of \{$(x_{i,v})_{v\in\Omega_k}$\}, the adelic 0-cycle $(x_v)_{v\in\Omega_k}$ can be written as the form: \begin{equation*}
(x_v)_{v\in\Omega_k}=\sum\limits_{1\leq i\leq n}r_i.(x_{i,v})_{v\in\Omega_k}+(z_v)_{v\in\Omega_k}
\end{equation*}where $\{r_i\}$ are integers,  and $(z_v)_{v\in\Omega_k}$ lies in $\kernel{\zcyclef{0}{\vA}(X)\rightarrow\chow_{0,\vA}^0(X)}= A_{N,S}$. 

For each $i$, according to Hironaka's resolution of singularity, we take a projective morphism $g_i:Z_i\rightarrow X_{K_i}$ such that $Y_{K_i}^{\sigma_i}$ is a dense open subset of $Z_i$ and $f_{K_i}^{\sigma_i}$ is the restriction of $g_i$ on $Y_{K_i}^{\sigma_i}$.
Since $Y_{K_i}^{\sigma_i}$ satisfies the weak approximation for rational points for each $i$, by Lemma \cite[Lemma 1.8]{Wit12}, there is a ratioanl point $Q_i$ of $Z_i$ such that $Q_i$ and $(Q_{i,w_i})_{w_i\in\Omega_{K_i}}$ have the same image in $\chow_0(Z_{i,w})/N$ for each $w$ lies above some $v\in S$. Hence, $\pi_{K_i/k}({f_{K_i,*}^{\sigma_i}}(Q_i))-\pi_{K_i/k}({f_{K_i,*}^{\sigma_i}}((Q_{i,w_i})_{w_i\in\Omega_{K_i}}))=\pi_{K_i/k}({g_{i,*}(Q_i-(Q_{i,w_i})_{w_i\in\Omega_{K_i}}}))$ lies in $A_{N,S}$.

Put them together, we obtain that \begin{equation*}
(x_v)_{v\in\Omega_k}=\underbrace{\Big(\sum\limits_{1\leq i\leq n}r_i.x_i\Big)+\Big(\sum\limits_{1\leq i\leq n}r_i.\pi_{K_i/k}(f_{K_i,*}^{\sigma_i}(Q_i))\Big)}_{\text{a global 0-cycle}}+(z_v')_{v\in\Omega_k}.
\end{equation*}Here $(z_v')_{v\in\Omega_k}$ lies in $A_{N,S}$ (it is a finite sum of adelic 0-cycles in $A_{N,S}$). This completes the proof.
\end{proof}

	\vspace{3ex}
		\textbf{Funding.} This work was supported by Quantum Science and Technology-National Science and Technology Major Project [2021ZD0302902].
		
	\vspace{3ex}
			\textbf{Acknowledgements.} The author would like to thank my PhD advisor Yongqi Liang, Prof. Yang Cao and Prof. Yisheng Tian for their kind suggestions.	


	\newpage
	\printbibliography

\end{document}